\newcommand \datum {August 10, 2021}
\numberwithin{equation}{section}
\theoremstyle{plain}
 \newtheorem{theorem}{Theorem}[section]
 \newtheorem{lemma}[theorem]{Lemma}
 \newtheorem{proposition}[theorem]{Proposition}
 \newtheorem{corollary}[theorem]{Corollary}
\theoremstyle{definition}
 \newtheorem{definition}[theorem]{Definition}
 \newtheorem{example}[theorem]{Example}
 \newtheorem{observation}[theorem]{Observation}
 \newtheorem{remark}[theorem]{Remark}
 \newtheorem{case}{Case} 
 \newtheorem{claim}{Claim} 
\newenvironment{enumeratei}{\begin{enumerate}[\quad\upshape (i)]} {\end{enumerate}}
\newcommand \Lext  {\pLext{L}{e}{h}}
\newcommand \pLext [3] {{#1}({#2}_\ast\kern-5pt\mathord{\searrow}{#3})}
\newcommand \wk [1]  {\widehat{#1}}
\newcommand \plue [1] {#1^{\kern-1pt+\kern-1pt e}}
\newcommand \mine [1] {#1^{\kern-1pt -\kern-1pt e}}
\newcommand \alit [1] {#1^{\kern-1.5pt\triangledown}}
\newcommand \lift [1] {#1^{\kern-1pt\blacktriangle}}
\newcommand \ogiso {\overline \phi_{\textup{geo}}}
\newcommand \giso {\phi_{\textup{geo}}}
\newcommand \liso {\phi_{\textup{lat}}}
\newcommand \Lclass {{\mathbf L}^{\kern-2pt\textup c}}
\newcommand \Gclass {{\mathbf G}^{\textup c}}
\newcommand \At [1] {\textup{At}(#1)}
\newcommand \fprec {\mathrel{\prec_{\alg F}}}
\newcommand \fpreceq {\mathrel{\preceq_{\alg F}}}
\newcommand \gprec {\mathrel{\prec_{\alg G}}}
\newcommand \ogmap {\overline\mu}
\newcommand \gmap {\mu}
\newcommand \swedge[1]{\mathop{\wedge\kern-1pt{}_#1}}
\newcommand \svee[1]{\mathop{\vee\kern-2pt{}_#1}}
\newcommand \sbigwedge[1]{\bigwedge\kern-2pt{}_#1}
\newcommand \sbigvee[1]{\bigvee\kern-3pt{}_#1}
\newcommand \nothing [1] {}
\newcommand \defiff {\overset{\textup{def}}\iff}
\newcommand \nonparallel {\mathrel{\not{\kern-1.4pt{\mathord\parallel}} }}
\newcommand \wha [1] {\alg{#1}^\ast}
\newcommand \mlat {\textup{Lat}}
\newcommand \mgeom {\textup{Geom}}
\newcommand \Lat [1]{\textup{Lat}(#1)}
\newcommand \Geom [1] {\textup{Geom}(#1)}
\newcommand \mclf {\textup{cl}_{\alg F}}
\newcommand \clf[1] {\textup{cl}_{\alg F}(#1)}
\newcommand \dwn {\mathord{\downarrow}}
\newcommand \Pow[1]{\textup{Pow}(#1)}
\newcommand \width[1]{\textup{width}(#1)}
\newcommand \then {\Rightarrow}
\newcommand \tbf[1]  {\textbf{#1}} 
\newcommand \alg[1]  {\mathcal #1}
\newcommand \Jir [1] {\textup J(#1)}
\newcommand \set [1]{\{#1\}}
\renewcommand \phi{\varphi}
\newcommand \lcov [2] {#2_{\ast#1}}
\newcommand \restrict [2] {#1\rceil_{\kern -1pt #2}}
\newcommand \sideal [2] {\mathord{\downarrow}_{\kern-1pt#1}\kern 1pt #2}
\newcommand \sfilter [2] {\mathord{\uparrow}_{\kern-1pt#1}\kern 1pt#2}
\newcommand \sfolter [2] {\mathord{\Uparrow}_{\kern-1pt#1}\kern 1pt #2}
\newcommand \sodeal [2] {\mathord{\Downarrow}_{\kern-1pt#1}\kern 1pt #2}
\newcommand \length [1]   {\textup{length}(#1)}
\newcommand \injour[1] {}
\newcommand\red[1]{{\textcolor{red}{#1}}}
\begin{document}
\title[Lowering a join-irreducible element of a semimodular lattice]
{{Length-preserving extensions of semimodular lattices by lowering  join-irreducible elements}}

\author[G.\ Cz\'edli]{G\'abor Cz\'edli}

\email{czedli@math.u-szeged.hu}
\urladdr{http://www.math.u-szeged.hu/~czedli/}
\address{ Bolyai Institute, University of Szeged, Hungary}

\begin{abstract} 
We prove that if $e$ is a join-irreducible element of a semimodular lattice $L$ of finite length and $h<e$ in $L$ such that $e$ does not cover $h$, then $e$ can be ``lowered'' to a covering of $h$ by taking a length-preserving semimodular extension $K$ of $L$ but not changing the rest of join-irreducible elements. With the help of our ``lowering construction'', we prove a general theorem on length-preserving  semimodular extensions of semimodular lattices, which  implies some earlier results  proved by G.\ Gr\"atzer and E.\ W.\ Kiss (1986), M. Wild (1993), and G.\ Cz\'edli and E.\ T.\ Schmidt (2010) on extensions to geometric lattices, and even an unpublished result of E.\ T.\ Schmidt on higher dimensional rectangular lattices. Our method offers shorter proofs of these results than the original ones. To obtain the main tool used in the paper, we extend the  bijective correspondence between finite semimodular lattices and Faigle geometries to an analogous correspondence between semimodular lattices of finite lengths and a larger class of geometries.
\end{abstract}

\thanks{This research was supported by the National Research, Development and Innovation Fund of Hungary under funding scheme K 134851.}

\subjclass {Primary: 06C10, secondary: 51D25, 51E99}
%06C10 (1980-now) Semimodular lattices, geometric lattices
%    51D (1980-now) Geometric closure systems
%51D25 (1980-now) Lattices of subspaces and geometric closure systems 
%   51E (1980-now) Finite geometry and special incidence structures
%51E99 (1980-now) None of the above, but in this section

\dedicatory{Dedicated to the memory of my father, Gy\"orgy}

\keywords{{Semimodular lattice, geometric lattice, cover-preserving extension, Faigle geometry,  planar semimodular lattice, rectangular lattice, poset of join-irreducible elements, slim semimodular lattice, rectangular lattice, closure operator}}

\date{\datum.\hfill{{General rule: \boxed{\tbf{check}} the author's website for updates}%
}}

\leftline{\hfill{\tiny{Extended version, 10 typos have been corrected}}}

\maketitle

\section{Introduction}\label{sect:visualintro}

\begin{figure}[h]
\centerline
{\includegraphics[width=\textwidth]{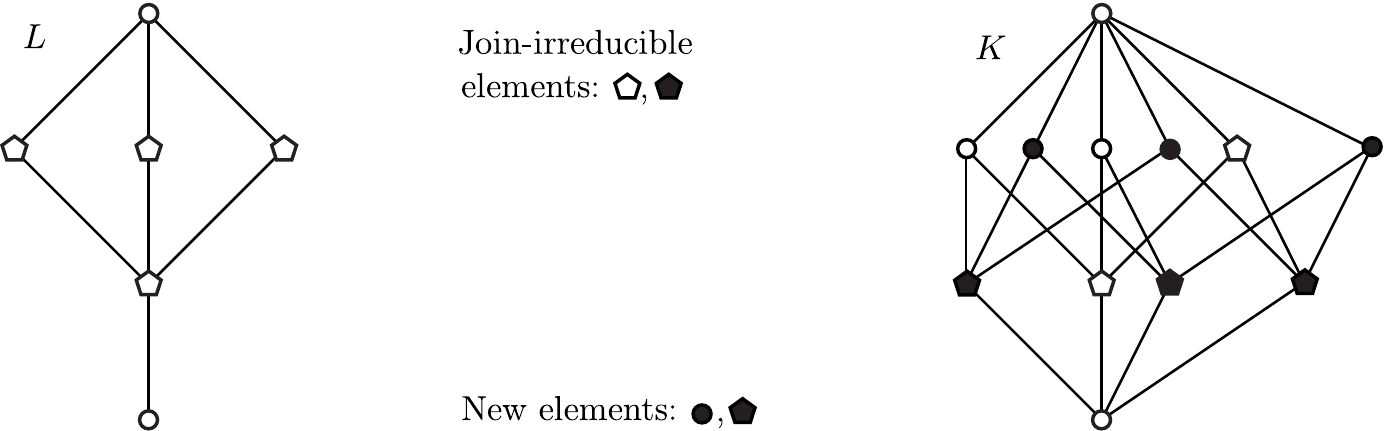}}      %[scale=0.93]{czgamfig1}}
\caption{A length-preserving embedding of $L$ into $K$}\label{figp3}
\end{figure}

There are known  constructions, to be listed later, that yield a cover-preserving $\set{0,1}$-embedding
of a finite semimodular lattice $L$ into a finite geometric lattice $K$; see  Figure~\ref{figp3} for an example. (We will point out later, right before the proof of Theorem~\ref{thmmain},  why $K$ is a geometric lattice.)
In this paper, we are going to extend $L$ to $K$ in \emph{elementary steps}. Each step only ``lowers'' one join-irreducible element; see Figures~\ref{figp4}--\ref{figp6}.  This gives better insight into the construction since our elementary steps are easier to understand than an immediate transition from $L$ to a geometric lattice.
As a consequence, our approach  leads to a general embeddability theorem. To provide an appropriate tool for our proofs, we introduce a \emph{class of geometries} that are in bijective correspondence with semimodular lattices of finite length. The finite ones of these geometries  are due to Faigle~\cite{faigle}.

\begin{figure}[h]
\centerline
{\includegraphics[width=\textwidth]{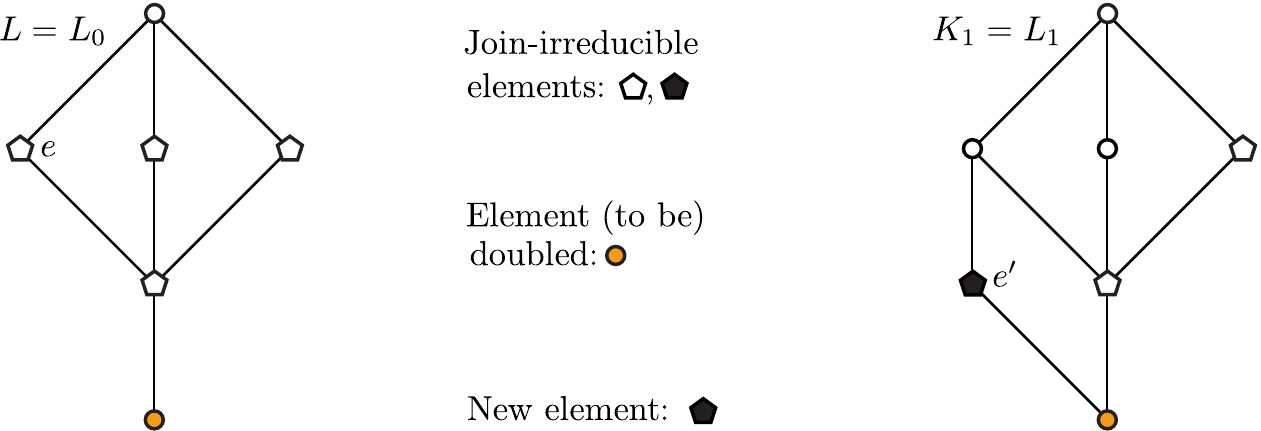}}      %[scale=0.93]{czgamfig1}}
\caption{First step: we lower $e\in\Jir L$ down to a new atom, $e'$. (Some ingredients of Figures~\ref{figp4}--\ref{figp6} will only be explained later.) }\label{figp4}
\end{figure}

\begin{figure}[h]
\centerline
{\includegraphics[width=\textwidth]{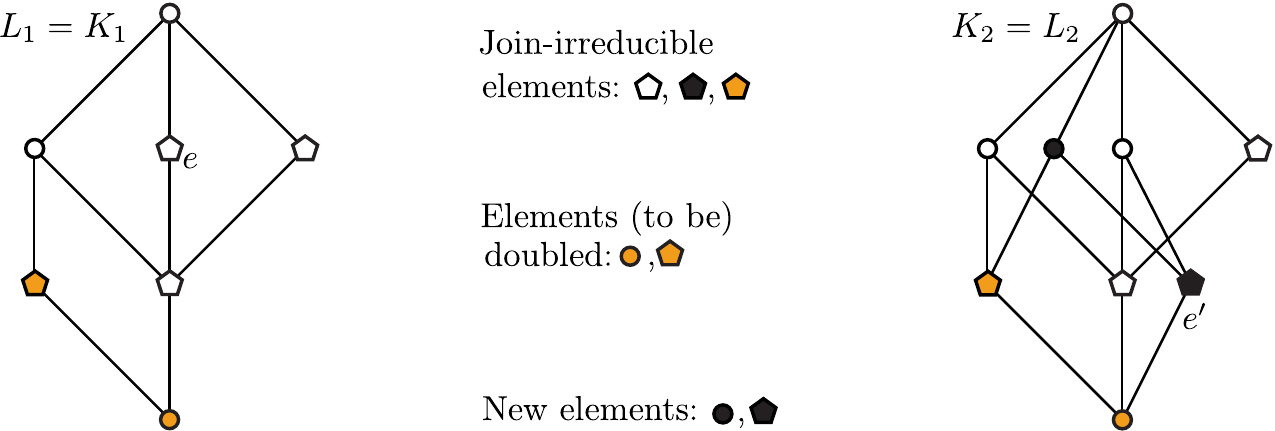}}      %[scale=0.93]{czgamfig1}}
\caption{Second step: another join-irreducible element, also denoted by $e$, is lowered}\label{figp5}
\end{figure}

\begin{figure}[h]
\centerline
{\includegraphics[width=\textwidth]{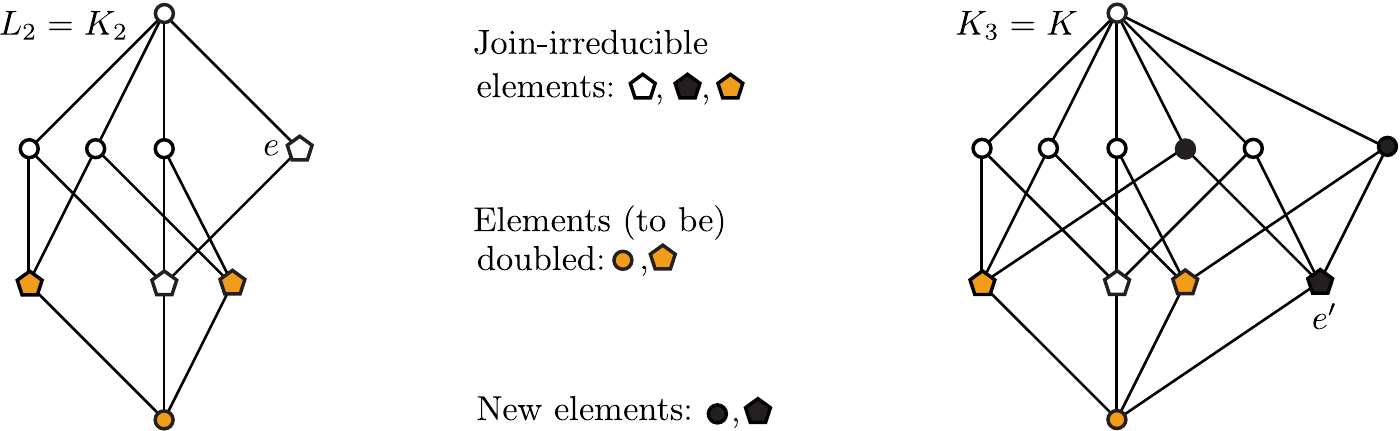}}      %[scale=0.93]{czgamfig1}}
\caption{By lowering the only non-atom join-irreducible element $e$, we obtain the required $K$}\label{figp6}
\end{figure}

\begin{figure}[h]
\centerline
{\includegraphics[scale=1.0]{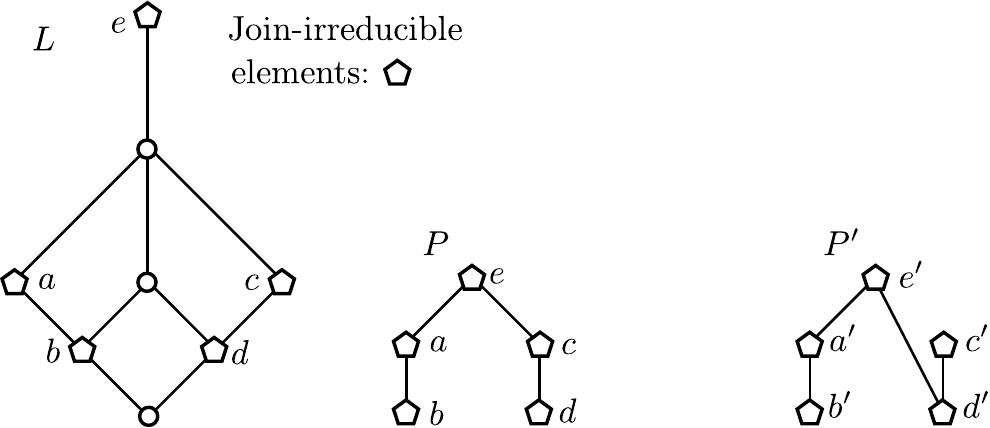}}      %[width=\textwidth]{czglpextfig25}}
\caption{{The ordering of $P=(\Jir L,\wk\rho)$ cannot be reduced to $P'=(\Jir K,\tau)$}}\label{figparx25}
\end{figure}

We admit that after Figures~\ref{figp4}--\ref{figp6}, the concept of lowering is not yet clear. At first sight, the following possibility could offer itself. We could say that $e\in\Jir L$ gets lower if, in an extension $K$ of $L$, there are less join-irreducible elements below $e$ than in the original lattice $L$. For example, apart from renaming $e$ to $e'$,
the only difference between $P$ and $P'$ in Figure~\ref{figparx25} is that
$\set{a,b,c,d}=\set{x\in P: x<_P e}\supset \set{x\in P': x<_{P'}e'}=\set{a,b,d}$. In fact, if $y\neq e$ and $y\in P$, then for any $x\in P$, $x<_P y\iff x<_{P'} y$. However, the following observation, which will be proved at the end of Section~\ref{sect:props}, shows that this meaning of lowering has not much to do with the present paper.

\begin{observation}\label{observe:nlkrttHgb} With $L$ and $P=\Jir L$ given in Figure~\ref{figparx25}, there is no length-preserving semimodular extension $K$ of $L$ such that $\Jir K$ is order-isomorphic to $P'$, given in the same figure.
\end{observation}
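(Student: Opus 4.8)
The plan is to convert the purely order-theoretic hypothesis $\Jir K\cong P'$ into a contradiction about actual elements of $K$, exploiting the rigidity of length-preserving embeddings near the bottom of the lattice. First I would record the decisive consequence of length preservation: if $L$ is a length-preserving $\set{0,1}$-sublattice of a semimodular lattice $K$ of finite length, then the height of every $x\in L$ is the same whether computed in $L$ or in $K$. Indeed, a maximal chain $C$ of $L$ through $x$ runs from $0$ to $1$ and has length $\length L$; since $\length K=\length L$, the chain $C$ cannot be lengthened, so it is maximal in $K$ as well, and the position of $x$ on $C$ gives its height in both lattices. In particular every atom of $L$ is an atom of $K$.

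Next I would pin down the atoms of $K$. The minimal elements of $\Jir K$ are precisely the atoms of $K$, and through the isomorphism $\Jir K\cong P'$ their number equals the number of minimal elements of $P'$, which is the number of minimal elements of $P=\Jir L$, i.e.\ the number of atoms of $L$. Since every atom of $L$ is already an atom of $K$, the two lattices have \emph{the same} atoms; write them as $a,b,c,d$, the minimal elements of $P$. Thus $K$ gains no new atoms, and the isomorphism $\phi\colon\Jir K\to P'$ restricts to a bijection of $\set{a,b,c,d}$ onto the minimal elements of $P'$.

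The crux is then short. Put $\wh e:=\phi^{-1}(e')\in\Jir K$. Because $\phi$ is an order isomorphism and $c$ is the unique minimal element of $P'$ with $c\not<_{P'}e'$, there is exactly one atom $t$ of $K$ with $t\not\le_K\wh e$, namely $t=\phi^{-1}(c)$; the other three atoms lie below $\wh e$. Now I would read off from Figure~\ref{figparx25} the defining feature of the depicted $L$: in $L$ each atom lies below the join of the remaining three (for that $L$ the join of any two atoms already equals the lower cover $e_\ast$ of $e$, and $e_\ast$ dominates all four atoms, so no atom is essential). As $L$ is a sublattice of $K$, this dependency persists in $K$, giving $t\le_K\bigvee\bigl(\set{a,b,c,d}\setminus\set t\bigr)$. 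But the three atoms on the right all lie below $\wh e$, whence $t\le_K\wh e$, contradicting $t\not\le_K\wh e$. Therefore no length-preserving semimodular extension $K$ with $\Jir K\cong P'$ can exist.

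I expect the main obstacle to be exactly the passage from the abstract datum $\Jir K\cong P'$ to claims about concrete elements of $K$: one must take care that $\phi$ need not fix the labels $a,b,c,d$, which is why the argument is set up so that the decisive dependency holds for \emph{every} atom of $L$ rather than for a single distinguished one, so that the unknown identity of $t$ does no harm. The only genuinely lattice-theoretic input is the height-preservation of length-preserving embeddings, from which the key structural fact ``$K$ has no new atoms'' follows; everything afterwards is bookkeeping with $\phi$ and the sublattice relation $L\subseteq K$.
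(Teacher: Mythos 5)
Your preparatory steps are sound, and they even survive for the actual figure: a length-preserving (cover-preserving $\set{0,1}$-) extension preserves heights, so atoms of $L$ stay atoms of $K$; the minimal elements of $\Jir K$ are exactly the atoms of $K$; and since $P$ and $P'$ have the same minimal elements, $K$ acquires no new atoms. The genuine gap is in what you claim to read off from Figure~\ref{figparx25}. In the depicted example, $a,b,c,d$ are \emph{not} four pairwise incomparable atoms: the poset $P=\Jir L$ contains the chain $b<_P a<_P e$, so $L$ has only the three atoms $b,c,d$, and $\length L=4$. (This is visible from the paper's own proof, which deduces the maximal chain $0\prec_K b'\prec_K a'\prec_K e'\prec_K 1_K$ from $\length K=\length L=4$; such a chain could not exist if the primed elements other than $e'$ were all atoms.) Consequently your ``defining feature'' --- that the join of any two atoms equals $\lcov L e$, hence every atom lies below the join of the remaining three --- is not merely unverified but impossible for the depicted $L$: in a semimodular lattice the join of two atoms has height at most $2$, whereas $\lcov L e$ has height $3$, because $e=1_L$ (every other join-irreducible element lies below $e$) has height $\length L=4$. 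So your argument proves the observation for a different lattice (in effect $M_4$ with a new top, of length $3$), not for the $L$ of the figure.

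With the correct $P'$ the final step also collapses quantitatively: among the three atoms, only $b$ and $d$ lie below $e'$ (the third element below $e'$ is $a$, which is not minimal), so your scheme would need the dependency ``each atom of $L$ lies below the join of the other two,'' which is neither given nor derivable from the stated data. The paper's proof bypasses atom dependencies entirely: $c'\not\le e'$ gives $e'\ne 1_K$, so $\length K=4$ forces $0\prec_K b'\prec_K a'\prec_K e'\prec_K 1_K$; then $d'\le_K e'$, $d'\not\le_K a'$, and $a'\prec_K e'$ yield $a'\svee K d'=e'$, contradicting $e'\in\Jir K$. If you keep your (correct) height-preservation lemma, replacing your dependency step by this covering argument repairs the proof.
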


\section{Our goals, a little history, some definitions, and an outline}\label{sect:intro}

The reader is only assumed to have some basic familiarity with lattices. For readers knowing that any semimodular lattice of finite length  satisfies the 
\begin{equation}
\parbox{9cm}{\emph{Jordan--H\"older chain condition} (JHCC),  that is, any two maximal chains in an interval have the same length,}
\end{equation}
the paper is probably self-contained. 
We always assume that 
\begin{equation}
\text{every lattice in the paper is a semimodular lattice of finite length}
\label{txt:convSMLFL}
\end{equation}
even when this is not repeated. For brevity, we often call these lattices \emph{\eqref{txt:convSMLFL}-lattices}.

\subsection{First goal: extending the concept of Faigle geometries}
\label{subsect:fgF} For a  \eqref{txt:convSMLFL}-lattice $L$,  $\Jir L$ will always stand for the poset (partially ordered set) of (nonzero) join-irreducible elements of $L$. A 
finite semimodular lattice $L$ is  satisfactorily described by a finite geometry on $\Jir L$, which we call the \emph{Faigle geometry} associated with $L$; see Faigle~\cite{faigle}, and see also Cz\'edli~\cite{czg:faigle} for a recent revisiting. By extending U.\ Faigle's 1980 result, our first goal is to define a larger class of geometries that are in a ``canonical '' bijective correspondence with  \eqref{txt:convSMLFL}-lattices.

\subsection{Second (main) goal: lowering a join-irreducible element}\label{subsect:scglw} 
For a  \eqref{txt:convSMLFL}-lattice $L$,
let $\At L:=\set{a\in L: 0\prec a}$ denote the \emph{set of atoms} of $L$. If, in addition to \eqref{txt:convSMLFL}, $\Jir L=\At L$, then $L$ is a \emph{geometric lattice}. 
By the Dilworth Embedding Theorem, each finite lattice $L$ can be embedded into a finite geometric lattice $K^\ast$. {(Later, Pudl\'ak and T\r uma~\cite{pudlaktuma} proved even more.)}{} Wild~\cite{wild} observed that, \emph{implicitly}, the \emph{proof} of this theorem yields a length-preserving embedding if $L$ is semimodular; see pages 125--131 in P.\ Crawley and R.P.\ Dilworth~\cite{crawleydilworth}. Prior to M.\ Wild's observation,
 Gr\"atzer and Kiss~\cite{ggkiss} proved in 1986 that each finite semimodular lattice $L$ has a \emph{length-preserving embedding} (in other words, a \emph{cover-preserving $\set{0,1}$-embedding}) into a finite geometric lattice $K^\ast$. 
In 1993, Wild~\cite{wild} strengthened this result by constructing a length-preserving extension $K^\ast$ of $L$ with the additional property that $|\At {K^\ast}|=|\Jir L|$. 
In 2010, Cz\'edli and Schmidt~\cite{czgscht2geom} went even further by allowing  \eqref{txt:convSMLFL}-lattices. The proofs of these results are either long and complicated or, as in Wild~\cite{wild}, rely on results from matroid theory. The geometric lattice $K^\ast$ is much larger than $L$ in general, and some of us (including the author) do not have a satisfactory insight into its structure.  This is why our goal is to approach the geometric lattice $K^\ast$ in many steps, by lowering only one non-atom join-irreducible element and lowering it just a little at each step. Theorem~\ref{thmmain}  asserts that such a lowering is always possible. 

\subsection{Third goal: applications and properties of the lowering construction}\label{subsect:thgppl}
The advantage of our lowering construction is 
that it is  simpler than the ``big jump'' from $L$ to the geometric lattice $K^\ast$. 
Due to this fact, we prove Theorem~\ref{thm:infrect}, which is a general result  on length-preserving extensions of semimodular lattices. 
Also, we  present some lemmas and draw some diagrams to give more insight into our construct. The two theorems of the paper imply the mentioned results on extensions into geometric lattices and an unpublished result of  E.\ T.\ Schmidt on higher dimensional rectangular extensions. Our approach to these results is shorter than the original ones except possibly for Wild~\cite{wild}.
Even the classical result that $\length L=|\Jir L|$ for a finite distributive lattice $L$ becomes a trivial consequence of our theorems.

\subsection{Outline} Section~\ref{sect:faigle} generalizes Faigle geometries in \eqref{pbx:RdGvBlJdX} and proves their simplest properties, see Lemma~\ref{lemma:nFjRjf}. Proposition~\ref{prop:sDlspFl} in Section~\ref{sect:ccorsp} establishes a canonical bijective correspondence between our geometries and \eqref{txt:convSMLFL}-lattices.   Section~\ref{sect:lower}  gives the exact meaning of lowering a join-irreducible element, see Definition~\ref{def:lower}, presents a related construction in Definition~\ref{def:Lext}, and proves  Theorem~\ref{thmmain} to ensure that our construction always gives a lowering. Section~\ref{sect:thlms} proves a general result, Theorem~\ref{thm:infrect}, on length-preserving extensions of \eqref{txt:convSMLFL}-lattices. Section~\ref{sect:corol} derives some earlier results from our theorems. Section~\ref{sect:props} proves some properties of the lowering construction and derives two earlier results as corollaries.

\section{A generalization of Faigle geometries}\label{sect:faigle}
There is an important but rarely used bijective correspondence between finite semimodular lattices and Faigle geometries; see Faigle~\cite{faigle}, in which these geometries are called proper geometries, and see Cz\'edli~\cite{czg:faigle}, where these geometries are revisited. In spirit, we go after \cite{czg:faigle}. This section is self-contained, so the reader need not look into \cite{czg:faigle} and  Faigle~\cite{faigle}.
The plan for this section is very simple. Starting from a \eqref{txt:convSMLFL}-lattice $L$ and motivated by Faigle geometries, we define a geometry $\Geom L$ in a natural way. We prove some properties of $\Geom L$ and make these properties axioms. In the next step, we prove some consequences of the axioms. Finally, we prove that $\Geom L$  determines $L$ up to isomorphism.

An element $p$ of a  \eqref{txt:convSMLFL}-lattice $L$ belongs to the poset $\Jir L=(\Jir L, \leq)$ of \emph{join-irreducible elements} if $p$ has exactly one lower cover, which we denote by $\lcov L p$. For a poset $P=(P,\leq_P)$ and $u\in P$, we let
\begin{equation*} 
\parbox{9.5cm}{
$\sideal P u := \set {x\in P:  x\leq u}$,
$\,\,\sodeal P u := \set {x\in P:  x< u}$, and, dually,
$\sfilter P u := \set {x\in P:  x\geq u}$,\quad   
$\sfolter P u := \set {x\in P:  x> u}$.
}
\label{eq:notUDarrows}
\end{equation*}
Note that if, say $u\in P_1$ and $P_1$ is a subposet of $P_2$ then, say, $\sodeal{P_1}u$ is different from $\sodeal {P_2}u$ in general. This is why we rarely omit the subscript from, say, $\sodeal P u$.

\begin{definition}\label{def:L2G} For a semimodular lattice $L$ of finite length, let $P_L$ be the poset $\Jir L$ and let $\alg F_L:=\set{P_L\cap\sideal L x: x\in L}$.
Then $\wha F_L:=(P_L,\alg F_L)$, also denoted by $\Geom L$, is the \emph{geometry of $L$}. The elements of $\alg F$ are called the \emph{flats} of $L$.
\end{definition}

For a set $U$, we denote the \emph{powerset} $\set{X:X\subseteq U}$ of $U$ by $\Pow U$. 
For a chain $C$, $\length C$ is defined by $|C|=1+\length C$. For a poset $P$, $\length P:=\sup\{\length C: C$ is a chain in $P\}$. If $X\subseteq P$ such that for every $u\in X$ we have that $\sideal P u\subseteq X$, then $X$ is called a \emph{down-set} of $P$.  For $x,y\in P$, $x\prec_P y$ denotes that $y$ covers $x$ in $P$. 
Most ingredients of the following lemma and its proof are extracted from Faigle~\cite{faigle}, and almost all ingredients from Cz\'edli~\cite{czg:faigle}.

\begin{lemma}\label{lemma:fhbLjmkLpR}
For a semimodular lattice $L$ of finite length, let $\wha F=(P,\alg F)$ stand for $\Geom L=(P_L,\alg F_L)$; see Definition~\ref{def:L2G}. Then $\wha F$ has the following properties. 
\begin{enumerate}
\item[\textup{(FL)}] $P$ is a poset, $\alg F\subseteq \Pow P$, and the poset $(\alg F,\subseteq)$ is of finite length.
\item[\textup{(F$\cap$)}] $P\in\alg F$ and  $\alg F$ is $\bigcap$-closed, that is, for all $\alg U\subseteq \alg F$, we have that $\bigcap\alg U\in\alg F$; 
\item[\textup{(F$\dwn$)}] Every member of $\alg F$ is a down-set of $P$;
\item[\textup{(Pr)}] $\emptyset\in \alg F$ and for each $u\in P$, both $\sideal P u$ and $\sodeal P u $ belong to $\alg F$; and
\item[\textup{(CP)}] For any $q\in P$ and $X\in \alg F$ such that $q\notin X$ and $\sodeal P q\subseteq X$, there exists a $Y\in \alg F$ such that $X\fprec Y$ and $q\in Y$.
\end{enumerate} 
\end{lemma}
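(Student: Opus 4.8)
The plan is to route everything through the map $g\colon L\to\alg F$ defined by $g(x):=P\cap\sideal Lx=\set{p\in\Jir L:p\le x}$, so that $\alg F=\set{g(x):x\in L}$. The one fact I would record first is the standard observation that in a lattice of finite length every element is the join of the join-irreducible elements below it, i.e.\ $x=\bigvee g(x)$ for all $x\in L$. Granting this, $g$ is injective, and since $g(x)\subseteq g(y)$ forces $x=\bigvee g(x)\le\bigvee g(y)=y$ while $x\le y$ obviously gives $g(x)\subseteq g(y)$, the map $g$ is an order isomorphism of $L$ onto $(\alg F,\subseteq)$. This settles the outstanding part of (FL) (the inclusion $\alg F\subseteq\Pow P$ being immediate from the definition): $(\alg F,\subseteq)\cong L$, so it is of finite length. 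I would keep this isomorphism in hand throughout, because it also converts covers in $\alg F$ into covers in $L$, which is exactly what (CP) will need.

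For (F$\cap$) and (F$\dwn$) I would just compute with $g$. Since $p\le\bigwedge_i x_i$ iff $p\le x_i$ for every $i$, one gets $g\bigl(\bigwedge_i x_i\bigr)=\bigcap_i g(x_i)$ for any family in $L$ (finite length makes the lattice complete, so the meet exists); taking the empty family gives $P=g(1)\in\alg F$, and together these yield $\bigcap$-closedness. Property (F$\dwn$) is immediate: if $p\le x$ and $p'\le p$ with $p'\in P$, then $p'\le x$, so $g(x)$ is a down-set of $P$. For (Pr), $\emptyset=g(0)$ because join-irreducible elements are nonzero, and $\sideal Pu=\set{p\in\Jir L:p\le u}=g(u)\in\alg F$ for every $u\in P$.

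The only slightly less automatic point in (Pr) is the identity $\sodeal Pu=g(\lcov Lu)$, for which I would use that $u$ has a \emph{unique} lower cover (which is exactly what $u\in\Jir L$ means here). If $z<u$ in $L$, then extending $z<u$ to a maximal chain and inspecting the element just below $u$ shows $z\le\lcov Lu$ by finite length; applying this to $z=p\in\sodeal Pu$ gives $\sodeal Pu\subseteq g(\lcov Lu)$, while the reverse inclusion holds because $\lcov Lu<u$. Hence $\sodeal Pu=g(\lcov Lu)\in\alg F$.

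The heart of the lemma, and the step I expect to be the main obstacle, is (CP); this is where semimodularity finally enters. Writing $X=g(x)$, the hypotheses read $q\not\le x$ (from $q\notin X$) and, by the previous paragraph, $\lcov Lq\le x$ (from $\sodeal Pq\subseteq X$). I would take $Y:=g(x\vee q)$; then $q\le x\vee q$ gives $q\in Y$, and by the order isomorphism it remains only to show that $x\vee q$ covers $x$ in $L$, for then $X\fprec Y$. To see this I would first pin down $q\wedge x$: since $q\not\le x$ we have $q\wedge x<q$, hence $q\wedge x\le\lcov Lq$ (unique lower cover again); on the other hand $\lcov Lq\le x$ and $\lcov Lq\le q$ give $\lcov Lq\le q\wedge x$, whence $q\wedge x=\lcov Lq\prec q$. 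Now semimodularity, in the form ``$a\wedge b\prec a$ implies $b\prec a\vee b$'', applied with $a=q$ and $b=x$, yields $x\prec x\vee q$, as required. The delicate things to get right are that the lower cover of $q$ is unique, so that $z<q\Rightarrow z\le\lcov Lq$, and that $q\wedge x$ equals $\lcov Lq$ exactly (not merely $\le\lcov Lq$), so that semimodularity is invoked with a genuine cover on its left-hand side.
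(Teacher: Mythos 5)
Your proposal is correct and follows essentially the same route as the paper: the order isomorphism $x\mapsto P\cap\sideal L x$ obtained from the fact that every element is the join of the join-irreducibles below it, the identity $\sodeal P u=P\cap\sideal L{\lcov L u}$ for (Pr), and the choice $Y:=g(x\vee q)$ with semimodularity forcing $x\prec x\vee q$ for (CP). The only (immaterial) difference is that you invoke semimodularity in its meet form after verifying $q\wedge x=\lcov L q$, while the paper uses the equivalent cover-preserving-join form $x=x\vee\lcov L q\prec x\vee q$ directly.
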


\begin{proof} Since $L$ is of finite length, it is a complete lattice. Hence, we can apply the rule $\bigcap\set{P\cap \sideal L {x_i}: i\in I}=P\cap \sideal  L {\bigwedge\set{{x_i}: i\in I}}$ to obtain that (F$\cap$) holds in  $\wha F$.  Trivially, so does (F$\dwn$).  Since
\begin{equation}
\text{every $z\in L$ is of the form $\sbigvee L (P\cap \sideal L z)$,}
\label{txt:szLvhHmwR}
\end{equation}
it follows that $(\alg F,\subseteq)$ is isomorphic to $(L,\leq)$. Thus, $\wha F$ satisfies (FL). Clearly, $\emptyset=P\cap\sideal L 0\in \alg F$ and, for any $u\in P$, $\sideal P u=P\cap\sideal L u\in\alg F$. By the join-irreducibility of $u$, $\sodeal P u=P\cap \sodeal L u=P\cap\sideal L{\lcov L u} \in\alg F$, whence $\wha F$ satisfies  (Pr). 
To deal with (CP), assume that $X\in \alg F$ is witnessed by $X=P\cap\sideal L x$. Let  $u\in P\setminus X$ such that $\sodeal P u\subseteq X$.
From $\sodeal P u=P\cap\sideal L{\lcov L u}$, we obtain that  $\lcov L u\leq x$ while $u\not\leq_L x$ since $u\notin X$. Hence, 
$x=x\vee \lcov L u \prec_L x\vee u:=y$ by semimodularity. Then $u\in Y:=P\cap\sideal L y$, and $X\fprec Y$ since $(\alg F,\subseteq)\cong(L,\leq)$.
Hence, (CP) holds in $\wha F$, completing the proof.
\end{proof}

Unless otherwise specified explicitly, the meaning of a \emph{geometry} in this paper is always the following.
\begin{equation}
\parbox{6.8cm}{A geometry is a pair $\wha F=(P,\alg F)$ 
satisfying (FL), (F$\cap$), (F$\dwn$), (Pr), and (CP).}
\label{pbx:RdGvBlJdX}
\end{equation}
If confusion threatens or just for emphasis, we can occasionally say that $\wha F$ is a 
\emph{\eqref{pbx:RdGvBlJdX}-geometry}.

From now on, we call (FL), (F$\cap$), (F$\dwn$), (Pr), and (CP) \emph{axioms} rather than properties. 
Note that the acronyms
(Pr) and (CP), taken from Cz\'edli~\cite{czg:faigle}, and (FL) come from``principal'', ``covering property'', and ``finite length,  respectively.
For a geometry $\wha F$, it follows from (Pr) that $\alg F$ is finite if and only if so is $P$; in this case we say that the geometry is finite. 
Based on Cz\'edli~\cite{czg:faigle}, note that finite \eqref{pbx:RdGvBlJdX}-geometries are the same as Faigle geometries, which were called proper geometries in Faigle~\cite{faigle}.

Before establishing a bijective correspondence between 
\eqref{txt:convSMLFL}-lattices and {\eqref{pbx:RdGvBlJdX}-geometries}, it is reasonable to explore some properties of these geometries. Observe that axiom (F$\cap$) means that $\alg F$ is a \emph{closure system}, in other words, a \emph{Moore family} on $P$. As usual in case of closure systems, we can take the \emph{closure map} (in other word, the \emph{closure operator}) $\mclf$ associated with $\alg F$ as follows.
\begin{equation}
\mclf\colon\Pow P\to \Pow P\quad\text{is defined by}\quad
X\mapsto \bigcap\set{Y\in\alg F: X\subseteq X}.
\end{equation}
It is well known (and easy to see) that for any $X\subseteq Y\in\Pow P$, we have that $X\subseteq \clf X=\clf{\clf X}\subseteq \clf Y$; we will frequently use these properties of closure maps implicitly.
For Faigle geometries, the following lemma is known from Cz\'edli~\cite{czg:faigle} and, apart from a slight difference in definitions,  also from Faigle~\cite{faigle}.

\begin{lemma}\label{lemma:nFjRjf}
Let $\wha F=(P,\alg F)$ be a \eqref{pbx:RdGvBlJdX}-geometry,  and assume that $X,Z\in \alg F$. Then
\begin{enumeratei}
\item[\textup{(DC)}] $X\fprec Z\,\,\iff \,\,(\exists u\in P\setminus X)\,\bigl(\sodeal P u\subseteq X\text{ and }Z=\clf{\set u\cup X}\bigr)$. \text{ Also,} 
\item[\textup{(UC)}] in \textup{(CP)}, $Y$ is uniquely determined. Furthermore,
\item[\textup{(CE)}]  $X\fpreceq Z\,\,\iff \,\,(\exists u\in P)\,(\sodeal P u\subseteq X\text{ and }Z=\clf{\set u\cup X})$.
\end{enumeratei}
\end{lemma}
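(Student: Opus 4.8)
The plan is to treat \textup{(DC)} as the crux and obtain \textup{(UC)} and \textup{(CE)} as spin-offs, since all three assertions say that a cover of $X$ in $(\alg F,\subseteq)$ is produced by adjoining a single point $u$ with $\sodeal P u\subseteq X$ and then closing up. Before the main argument I would record one preliminary: although \textup{(FL)} imposes finite length only on $(\alg F,\subseteq)$, this forces $P$ to have finite length as well. Indeed, by \textup{(Pr)} we have $\sodeal P u\subsetneq\sideal P u$ in $\alg F$ for each $u\in P$, so a chain $u_0<\dots<u_n$ in $P$ produces a chain $\sideal P{u_0}\subsetneq\dots\subsetneq\sideal P{u_n}$ in $\alg F$, and hence $\length P\leq\length{\alg F,\subseteq}<\infty$. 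Consequently $P$ satisfies the descending chain condition, so every nonempty subset of $P$ has a $\leq_P$-minimal element; this is exactly what will license the ``pick a minimal new point'' step below.

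For the forward direction of \textup{(DC)} I would argue as follows. Assuming $X\fprec Z$, we have $X\subsetneq Z$, and by \textup{(F$\dwn$)} both sets are down-sets, so $Z\setminus X$ is a nonempty subset of $P$; pick a minimal element $u$ of it. Since $Z$ is a down-set and $u$ is minimal in $Z\setminus X$, every element strictly below $u$ lies in $X$, that is, $\sodeal P u\subseteq X$, and of course $u\in P\setminus X$. Because $\set u\cup X\subseteq Z$ and $Z$ is closed, $\clf{\set u\cup X}$ is a flat with $X\subsetneq\clf{\set u\cup X}\subseteq Z$; as $X\fprec Z$ admits no flat strictly in between, this forces $Z=\clf{\set u\cup X}$. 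For the converse I would use \textup{(CP)}: given $u\in P\setminus X$ with $\sodeal P u\subseteq X$ and $Z=\clf{\set u\cup X}$, applying \textup{(CP)} with $q:=u$ yields some $Y\in\alg F$ with $X\fprec Y$ and $u\in Y$; then $\set u\cup X\subseteq Y$ and $Y$ is closed, so $Z=\clf{\set u\cup X}\subseteq Y$, while $u\in Z\setminus X$ gives $X\subsetneq Z$, and squeezing $X\subsetneq Z\subseteq Y$ under $X\fprec Y$ yields $Z=Y$, i.e.\ $X\fprec Z$.

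The same closure-squeeze delivers \textup{(UC)}: if $Y\in\alg F$ is \emph{any} flat with $X\fprec Y$ and $q\in Y$ (where $q\notin X$ and $\sodeal P q\subseteq X$), then $X\subsetneq\clf{\set q\cup X}\subseteq Y$ together with $X\fprec Y$ forces $Y=\clf{\set q\cup X}$, a value independent of the choice of $Y$; hence the flat supplied by \textup{(CP)} is unique. For \textup{(CE)} I would simply drop the requirement $u\notin X$ and split into cases. If $Z=\clf{\set u\cup X}$ with $\sodeal P u\subseteq X$, then $u\in X$ gives $Z=\clf X=X$ (equality), while $u\notin X$ gives $X\fprec Z$ by \textup{(DC)}; either way $X\fpreceq Z$. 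Conversely, $X\fprec Z$ is witnessed through \textup{(DC)} (the witness $u$ lies in $P\setminus X\subseteq P$), and $X=Z$ is witnessed by any $u\in X$, since then $\sideal P u\subseteq X$ by \textup{(F$\dwn$)} and $\clf{\set u\cup X}=\clf X=X$.

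The step I expect to be the only non-mechanical one is the preliminary transfer of finiteness: \textup{(FL)} is an axiom about $\alg F$, and one must first convert it into a descending chain condition on $P$ (via \textup{(Pr)}) before a minimal element of $Z\setminus X$ can be extracted in \textup{(DC)}. After that, every step reduces to the routine observation that the cover of $X$ is pinned down as the closure $\clf{\set u\cup X}$ of $X$ together with a single minimal new point, so the genuine content is precisely this identification.
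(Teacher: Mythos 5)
Your proof is correct and takes essentially the same route as the paper's: the forward direction of (DC) picks a minimal element of $Z\setminus X$ and squeezes $X\subset\clf{\set u\cup X}\subseteq Z$ (your preliminary observation that (FL) together with (Pr) forces $P$ itself to have finite length just makes explicit what the paper asserts without comment), the converse uses (CP) plus the same squeeze, and (CE) is the same case split on $u\in X$. The only cosmetic difference is (UC), where you re-use the closure-squeeze while the paper intersects two putative covers $Y_1\cap Y_2$ and invokes (F$\cap$); both are one-line arguments, so this is not a genuinely different approach.
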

The acronyms above come from ``description of covering'',  ``unique cover'', and ``covers or equals'', respectively. 
Note  that in this paper ``$\subset$''   always means the conjunction of 
``$\subseteq$'' and ``$\neq$''.

\begin{proof}[Proof of Lemma~\ref{lemma:nFjRjf}]
If both $Y_1$ and $Y_2$ satisfied the requirements of (CP) and $Y_1\neq Y_2$, then  $Y_1\cap Y_2\in \alg F$, $X\fprec Y_1$, and $X\fprec Y_2$ 
 would lead to $q\in Y_1\cap Y_2=X$, a contradiction. Thus, we conclude (UC). 

To prove the ``$\Rightarrow$'' part of (DC), assume that $X\fprec Z$. Since $P$ is of finite length, we can take a minimal element  $u$ in $Z\setminus X$.
Then $\sodeal P u\subseteq X$ by the minimality of $u$, and $u\notin X$. Using that $X\fprec Z$ and $X\subset \set u\cup X\subseteq
\clf{\set u\cup X}\subseteq \clf Z=Z$, we conclude that $Z=\clf{\set u\cup X}$, as required. Conversely, to prove 
``$\Leftarrow$'' part, assume the existence of a $u\in P\setminus X$ such that $\sodeal P u\subseteq X$ and $Z=\clf{\set u\cup X}$. By (CP) and (UC), there is a unique $Y\in \alg F$ such that
$u\in Y$ and $X\fprec Y$. Since $X\subset \set u\cup X\subseteq \clf{\set u\cup X}\subseteq \clf Y=Y$, we obtain that $X\fprec Y=\clf{\set u\cup X}=Z$, as required.
This proves (DC). Finally, (CE) is a trivial consequence of (DC) since $u\in X\iff\clf{\set u\cup X}=X$.
\end{proof}

\section{Canonical correspondence}\label{sect:ccorsp} 
This section elaborates the correspondence between \eqref{txt:convSMLFL}-lattices and \eqref{pbx:RdGvBlJdX}-geometries. We need the following definition.
\begin{definition}
For geometries $\wha {F_1}=(P_1,\alg F_1)$ and $\wha {F_2}=(P_2,\alg F_2)$, by an \emph{isomorphism} $\ogmap\colon 
\wha {F_1} \to \wha {F_2}$ we mean a map $\ogmap \colon \alg F_1\to\alg F_2$ defined by $\ogmap(X):=\set{\gmap(y):y\in X}$ where $\gmap\colon P_1\to P_2$ is a poset isomorphism.
\end{definition}
Next, we define to maps; we will show later that their codomains are correctly given.

\begin{definition}
Let $\Gclass$ and $\Lclass$ be the class of 
\eqref{pbx:RdGvBlJdX}-geometries and that  of  \eqref{txt:convSMLFL}-lattices, respectively. (The superscript ``c'' comes from ``class''.)  We consider $\mgeom$, which is given in Definition~\ref{def:L2G},  a map $\mgeom \colon\Lclass\to \Gclass$. Also, we define a map $\mlat\colon \Gclass\to \Lclass$ by the rule that for $\wha F=(P,\alg F)\in\Gclass$, we let $\Lat{\wha F} :=(\alg F,\subseteq)$, it is the \emph{lattice} associated with $\wha F$. 
\end{definition}

\begin{proposition}\label{prop:sDlspFl}
For $L\in\Lclass$ and $\wha F=(P,\alg F)\in\Gclass$,  we have that $\Geom L\in \Gclass$, $\Lat {\wha F}\in \Lclass$, $\Lat{\Geom L}\cong L$, and $\Geom{\Lat{\wha F}}\cong \wha F$.
\end{proposition}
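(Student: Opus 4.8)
The plan is to verify the four assertions in turn, noting that two of them are already in hand. The first, $\Geom L\in\Gclass$, is precisely the content of Lemma~\ref{lemma:fhbLjmkLpR}. The third, $\Lat{\Geom L}\cong L$, was established inside the proof of that lemma: writing $\wha F=(P,\alg F)=\Geom L$, the assignment $x\mapsto P\cap\sideal L x$ is an order-isomorphism from $L$ onto $(\alg F,\subseteq)=\Lat{\Geom L}$, being surjective by the definition of $\alg F_L$ and injective with order-reflecting inverse by \eqref{txt:szLvhHmwR}. Thus only the second and fourth assertions require real work, and for these I fix a \eqref{pbx:RdGvBlJdX}-geometry $\wha F=(P,\alg F)$ and write $L:=\Lat{\wha F}=(\alg F,\subseteq)$.

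For $\Lat{\wha F}\in\Lclass$, I would first observe that (F$\cap$) makes $\alg F$ a closure system with top $P$, so that $(\alg F,\subseteq)$ is a complete lattice in which meets are intersections and the join of $A,B$ is $\clf{A\cup B}$; by (FL) it is of finite length. The remaining point, semimodularity, is where the axioms must genuinely be used. Given $A,B\in\alg F$ with $A\cap B\fprec A$, I would invoke (DC) to produce $u\in P\setminus(A\cap B)$ with $\sodeal P u\subseteq A\cap B$ and $A=\clf{\set u\cup(A\cap B)}$. A short computation with the closure operator then gives $A\vee B=\clf{\set u\cup B}$, while $\sodeal P u\subseteq A\cap B\subseteq B$ lets (CE) conclude $B\fpreceq A\vee B$; finally $B=A\vee B$ would force $A\subseteq B$ and hence $A\cap B=A$, contradicting $A\cap B\fprec A$, so in fact $B\fprec A\vee B$, which is exactly upper semimodularity.

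The fourth assertion, $\Geom{\Lat{\wha F}}\cong\wha F$, is the crux, and its heart is the identification of the join-irreducible elements of $L=(\alg F,\subseteq)$. I would show that $\Jir L=\set{\sideal P u:u\in P}$. For each $u\in P$, axiom (Pr) together with (DC) gives $\sodeal P u\fprec\sideal P u$, and a down-set argument based on (F$\dwn$) shows $\sodeal P u$ is the \emph{unique} lower cover, so $\sideal P u\in\Jir L$. Conversely, if $F\in\Jir L$ has unique lower cover $F_\ast$, then (DC) yields $u\in P\setminus F_\ast$ with $\sodeal P u\subseteq F_\ast$ and $F=\clf{\set u\cup F_\ast}$; since $u\in F$ and $F$ is a down-set, $\sideal P u\subseteq F$, and join-irreducibility of $F$ together with $u\notin F_\ast$ rules out $\sideal P u\subset F$, giving $F=\sideal P u$. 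Because plainly $u\leq v\iff\sideal P u\subseteq\sideal P v$, the map $\gmap\colon u\mapsto\sideal P u$ is a poset isomorphism $P\to\Jir L$. It then remains to check that the induced $\ogmap(X)=\set{\gmap(y):y\in X}$ carries $\alg F$ onto $\alg F_L$; this is immediate once one notes $\sideal P u\subseteq F\iff u\in F$, so that $\ogmap(F)=\set{\sideal P u:u\in F}=\Jir L\cap\sideal L F$, which runs exactly through $\alg F_L$ as $F$ runs through $\alg F$. Hence $\ogmap$ is an isomorphism of geometries.

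I expect the main obstacle to be the two structural steps that genuinely invoke the covering axioms: the semimodularity of $(\alg F,\subseteq)$ and, above all, pinning down $\Jir L$ as exactly $\set{\sideal P u:u\in P}$ with each $\sideal P u$ having $\sodeal P u$ as its \emph{unique} lower cover. The uniqueness of the lower cover is the delicate point, since it is what encodes join-irreducibility; it is handled by combining (F$\dwn$) (a flat strictly inside $\sideal P u$ cannot contain $u$, hence sits inside $\sodeal P u$) with the elementary lattice fact that any element strictly below a join-irreducible lies below its unique lower cover. Everything else reduces to bookkeeping with the closure operator.
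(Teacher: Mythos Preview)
Your proposal is correct and follows essentially the same route as the paper: the semimodularity argument via (DC) and a closure computation is identical in spirit (the paper shows $u\notin Y$ and then invokes (DC), whereas you invoke (CE) and rule out equality---a cosmetic difference), and the map $u\mapsto\sideal P u$ for the last isomorphism is exactly the paper's $\giso$ from \eqref{eq:mgrzhBklpSJ}. The only substantive difference is that the paper does not spell out the verification that $\Jir{\Lat{\wha F}}=\{\sideal P u:u\in P\}$ and that $\ogiso$ is an isomorphism of geometries; it simply cites \cite[Theorem~2.5(D)]{czg:faigle} for this step, while you give a self-contained argument using (F$\dwn$), (Pr), and (DC). Your added detail is sound and makes the proof independent of the external reference.
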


Before the proof, note that  Proposition~\ref{prop:sDlspFl} allows us to say that  \eqref{txt:convSMLFL}-lattices and 
\eqref{pbx:RdGvBlJdX}-geometries are \emph{canonically equivalent}. Also, we can use $\Geom L$ if $L\in\Lclass$ if given, and we can define an $L\in\Lclass$ by defining an 
$\wha F\in \Gclass$ and letting $L:=\Lat{\wha F}$.
It will frequently be implicit that we pass from geometries to lattices and back. 
 The advantage of using geometries is well shown by the proof of Theorem~\ref{thmmain} since a poset defined by a geometry is  automatically a lattice and it is semimodular.

\begin{proof}[Proof of Proposition~\ref{prop:sDlspFl}]
If  $L\in\Lclass$, then $\Geom L\in\Gclass$ by Lemma~\ref{lemma:fhbLjmkLpR}. Next, let $\wha F\in \Gclass$; we need to show that $\Lat {\wha F}=(\alg F,\subseteq)$ belongs to $\Lclass$. Since the members of any closure system are known to form a complete lattice, $\Lat {\wha F}$ is a complete lattice. Note that for $X,Y\in \Lat {\wha F}$, that is, for $X,Y\in\alg F$, we have that $X\wedge Y=X\cap Y$ and $X\vee Y=\clf{X\cup Y}$. By (FL),  $\Lat {\wha F}$ is of finite length. To prove that it is semimodular, assume that $X,Y \in \alg F$ such that $X\wedge Y\fprec X$; we need to show that $Y\fprec X\vee Y$. 
Clearly, $Y\subset X\vee Y$. (DC), see Lemma~\ref{lemma:nFjRjf}, allows us to pick a $u\in P\setminus(X\cap Y)$ such that 
$\sodeal P u\subseteq X\cap Y$ and $X=\clf{\set u\cup(X\cap Y)}$. Observe that $X\vee Y=\clf{X\cup Y}= \clf{\clf{\set u\cup(X\cap Y)}\cup Y}=\clf{ \set u\cup(X\cap Y)\cup Y}=\clf{ \set u\cup Y}$. If $u\in Y$, then $X=\clf{\set u\cup(X\cap Y)}\subseteq \clf Y=Y$ leads to $X=X\wedge Y$, a contradiction. Hence,  $u\notin Y$. Also, $\sodeal P u\subseteq X\cap Y\subseteq Y$, whence (DC) yields that $Y\fprec \clf{ \set u\cup Y}= X\vee Y$, as required. This shows that  $\Lat {\wha F}\in \Lclass$. By \eqref{txt:szLvhHmwR}, it is clear that the map

\begin{equation}
\liso\colon L\mapsto \Lat{\Geom L}\text{ defined by }
x\mapsto \Jir L\cap\sideal L x
\label{eq:lSlkpzs}
\end{equation}
is a lattice isomorphism. Using the notation
$\wha{F'}=(P',\alg F'):= \Geom{\Lat{\wha F}}$, let 
\begin{equation}
\ogiso\colon \wha F\to \wha{F'}
\text{ where }\giso\colon P\to P'\text{ is defined by }u\mapsto \sideal P u.
\label{eq:mgrzhBklpSJ}
\end{equation}
In a straightforward way, by the same argument as in the proof of Cz\'edli~\cite[Theorem 2.5(D)]{czg:faigle}, it follows that $\ogiso$ is an isomorphism. 
\nothing{(Note that the present paper does not need this fact.)}
\end{proof}

\section{Lowering a join-irreducible element}\label{sect:lower}

In addition to \eqref{pbx:RdGvBlJdX}-geometries, the basic concept of the paper is given in the following definition; this concept is visually illustrated by  
Figures~\ref{figp3}--\ref{figp6} and \ref{fig1}--\ref{fig2}; see Figure~\ref{fig1} for a particularly enlightening illustration.
Recall that for $e\in \Jir L$, the unique lower cover of $e$ in $L$ is denoted by $\lcov L e$. For $e'\in \Jir K$, we write
$\lcov K{e'}$ rather than $\lcov K{{e'}}$ or $\lcov K{(e')}$.
When we intend to ``lower'' $e$ to a new join-irreducible element $e'$ of an extended lattice $K$, to be constructed, we would like to decide where $e'$ should be in $K$. A reasonable way to do so is to specify which element of $L$ should be $\lcov K {e'}$.

\begin{figure}[h]
\centerline
{\includegraphics[width=\textwidth]{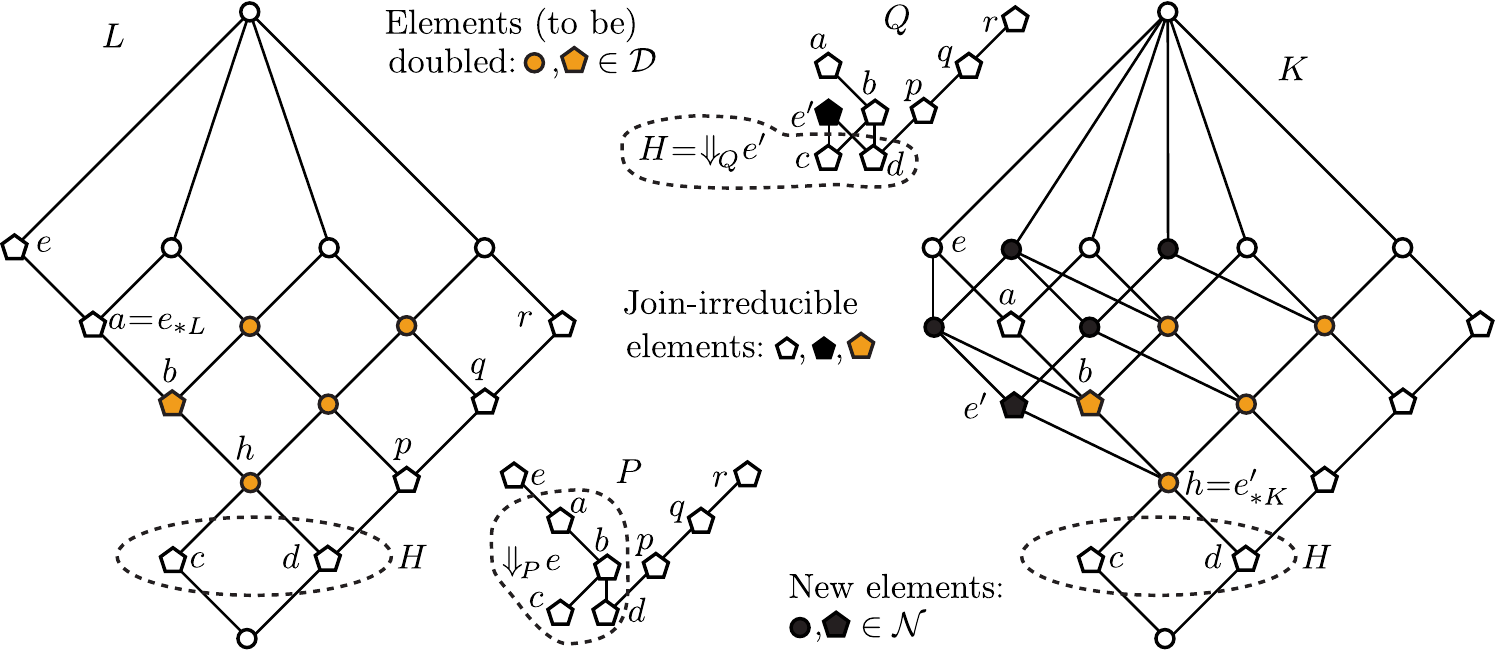}}      %[scale=0.93]{czgamfig1}}
\caption{Illustrating the lowering construction $\Lext$; see Definition~\ref{def:Lext}}\label{fig1}
\end{figure}

\begin{definition}\label{def:lower} 
For a \eqref{txt:convSMLFL}-lattice $L$, $e\in \Jir L$, and a lattice $K$, we say that  \emph{$K$ is obtained from $L$ by lowering $e$} if the following hold.
\begin{enumeratei}
\item\label{def:lowera} $K$ is also a \eqref{txt:convSMLFL}-lattice, $L$ is a sublattice of $K$, and $\length K=\length L$.
\item\label{def:lowerb} With the notation $P:=\Jir L$ and $Q:= \Jir K$, we have that $P\setminus Q=\set e$ and  $Q\setminus P=\set{e'}$ for a unique $e'\in Q$.
\item\label{def:lowerc} With $h:=\lcov K{e'}$, we have that $h\in L$ and  $h<\lcov L e$.
\end{enumeratei}
In this case, if we want to say more about the relation between $K$ and $L$, we use the terminology that $K$ is \emph{an extension of $L$ that lowers $e$ to a cover of $h$}.
\end{definition}

For example, $L$ and $K$ in  Figure~\ref{fig1} show how we can lower $e\in \Jir L$ to a cover of $h\in L$; the dotted ovals, $P$, and $Q$ are not relevant at present. 
We are going to point out soon, right before the proof of Theorem~\ref{thmmain},  why $L$ and $K$ in the figure are semimodular lattices. If $e$ is an atom (that is, $0\prec_L e$), then $h<\lcov L e=0$ is impossible and $e$ cannot be lowered. We are going to prove that if $e\in\Jir L$ is \emph{not} an atom, then there exists an  extension  $K$ of $L$ that lowers $e$ to a cover of $h$. First, in harmony with Figure~\ref{fig1}, we describe a construction. (The rest of figures are also relevant but Figure~\ref{fig1} indicates generality better.)
The set of atoms of $L$ will be denoted by $\At L$; note that $\At L\subseteq \Jir L$.

\begin{definition}\label{def:Lext} 
Let $L$ be a semimodular lattice of finite length, $e\in\Jir L\setminus\At L$,  $h\in L$, and $h<\lcov L e$. We define a lattice $\Lext$ as follows. With
\begin{align}
&D:=\set{x\in \sfilter L h: (\forall y\in L)\,(x\preceq_L y\then e\not\leq y)}\text{. Naturally, we assume}
\label{eq:wmVkrjfNbBj}\\
&\text{that }
N:=\set{\lift x: x\in D}\text{ is disjoint from $L$, and we let }K:=L\cup N.
\label{eq:hmRsnCscPrmTs}
\end{align}
For $y=\lift x\in N$, we often denote $x$ by $\alit y$.
We define a relation $\leq_K$ on $K$ by 
\begin{equation}x\leq_K y \defiff 
\begin{cases}
x,y\in L\text{ and }x\leq_L y&\text{\quad or} \cr
x,y\in N\text{ and }\alit x\leq_L \alit y& \text{\quad or}\cr 
x \in L\text{, } y \in N\text{, and }x\leq_L \alit y& \text{\quad or}\cr
x \in N\text{, }y\in L\text{, and }\alit x\svee L e\leq_L y.
\end{cases}
\label{eq:zSznJznKgTsh}
\end{equation}
Finally, we define $\Lext$ as $(K,\leq_K)$. (It needs a proof that it is a lattice.)
\end{definition}

For $x\in D$, $\lift x$ is a new element, the ``lifted variant'' of $x$. The set $D$ is the set of elements that will be ``\emph doubled'' while $N$ is the set of the ``\emph new'' elements; this explains their notations. 
Since this paper is about easy elementary steps, note the following
\begin{remark}In this paper, for any application of the concept of lowering a join-irreducible element, it suffices to consider the particular case where $h\prec_L \lcov L e$; see Definition~\ref{def:lower}\eqref{def:lowerc}. In this particular case, the construction given in Definition~\ref{def:Lext} is a bit simpler.
\end{remark}

Based on Definitions~\ref{def:lower} and \ref{def:Lext}, we are in the position to formulate the first of the two theorems of the paper.
\begin{theorem}\label{thmmain} Assume that $L$ is  a semimodular lattice of finite length, $e$ is a join-irreducible element of $L$ that is not an atom of $L$, and $h\in L$ is less than the unique lower cover $\lcov L e$ of $e$ in $L$.
Then $K:=\Lext$ from  Definition~\ref{def:Lext} is 
an extension of $L$ that lowers  $e$ to a cover of $h$.
\end{theorem}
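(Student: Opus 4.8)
The plan is to verify that $K=\Lext$ satisfies the three conditions of Definition~\ref{def:lower}. Rather than checking the lattice axioms and semimodularity for $(K,\leq_K)$ directly, I would exploit the canonical correspondence established in Proposition~\ref{prop:sDlspFl}: I would construct a \eqref{pbx:RdGvBlJdX}-geometry $\wha G=(Q,\alg G)$ and prove that $\Lat{\wha G}\cong K$. Since $\Lat{\wha G}$ is automatically a semimodular lattice of finite length, this is the cleanest route to establishing that $K$ is a \eqref{txt:convSMLFL}-lattice. Concretely, I would set $Q:=(P\setminus\set e)\cup\set{e'}$ where $P=\Jir L$ and $e'$ is a new point, equip $Q$ with the order that agrees with $P$ off $e'$ and places $\sodeal Q{e'}=\sideal P h$ (so that $e'$ will cover exactly the join-irreducibles below $h$, realizing condition~\eqref{def:lowerc}), and then define the flats $\alg G$ so that the associated lattice reproduces $L$ away from the doubled region $D$ and inserts the lifted copies $N$.

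The key steps, in order, would be: \textbf{(1)} Define the candidate geometry $\wha G=(Q,\alg G)$ on the new poset $Q$ and verify axioms (FL), (F$\cap$), (F$\dwn$), (Pr), (CP) for it; the crucial design choice is getting $\sodeal Q{e'}$ right so $e'$ is join-irreducible with the intended lower cover. \textbf{(2)} Construct an explicit bijection $K\to\alg G$ (equivalently, prove $\Lat{\wha G}\cong(K,\leq_K)$) that is order-preserving in both directions, matching the four cases of \eqref{eq:zSznJznKgTsh} against inclusion of flats; the elements of $L\setminus D$ should map to flats not containing $e'$, the elements of $D$ to flats containing $e'$ but structured via the closure, and the lifted elements $\lift x\in N$ to the appropriate covering flats. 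The clause ``$\alit x\svee L e\leq_L y$'' in the fourth case of \eqref{eq:zSznJznKgTsh} is precisely what encodes, on the lattice side, that a new element sits below an old element exactly when its projection-up past $e$ does. \textbf{(3)} Read off the three conditions of Definition~\ref{def:lower}: that $L$ is a sublattice of $K$ with $\length K=\length L$ follows from $\length\Lat{\wha G}=\length\alg G=\length Q=\length P=\length L$ (no new join-irreducibles are added below the old length), condition~\eqref{def:lowerb} follows from $P\setminus Q=\set e$ and $Q\setminus P=\set{e'}$ by construction, and condition~\eqref{def:lowerc} follows from $\lcov K{e'}=h<\lcov L e$, since by construction $\sodeal Q{e'}=\sideal P h$.

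The main obstacle I anticipate is step~(2): establishing that the explicitly defined order $\leq_K$ of \eqref{eq:zSznJznKgTsh} coincides with the inclusion order on $\alg G$, in particular that the fourth case (new element below old element) is correctly captured by the flats. This requires a careful case analysis showing that $\alit x\svee L e\leq_L y$ holds if and only if the flat assigned to $\lift x$ is contained in the flat assigned to $y$, and it is here that the definition of $D$ in \eqref{eq:wmVkrjfNbBj}---the elements $x$ whose every upper cover avoids $e$---plays its role: $D$ is exactly the set of old elements that must be doubled because the new element $e'$ can be inserted below them without forcing $e$ upward. Verifying that $\leq_K$ is a partial order (antisymmetry and transitivity across the four mixed cases), and that meets and joins in $K$ behave as the identification with $\alg G$ predicts, will be the most delicate bookkeeping; once the isomorphism $\Lat{\wha G}\cong K$ is in hand, semimodularity, finite length, and the sublattice property come for free from Proposition~\ref{prop:sDlspFl}.
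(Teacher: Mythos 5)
Your high-level route is the same as the paper's: build a \eqref{pbx:RdGvBlJdX}-geometry whose lattice of flats is $K$, let Proposition~\ref{prop:sDlspFl} supply completeness, semimodularity, and finite length for free, and then read off Definition~\ref{def:lower}. (The paper even keeps the same underlying set $P=\Jir L$ and merely changes the order at $e$, see \eqref{pbxDfRrrh} and \eqref{eq:smmHrGsGnrsz}; that is your construction with $e'$ not renamed.) However, two of your steps, as stated, would fail. First, your poset $Q$ is mis-specified: you prescribe only what lies \emph{below} $e'$, namely $\sodeal Q{e'}=P\cap\sideal L h$, and say the order ``agrees with $P$ off $e'$'', leaving $e'$ with an empty up-set. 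But in $K=\Lext$ the fourth case of \eqref{eq:zSznJznKgTsh} gives $\lift h\leq_K e$ (because $h\svee L e=e$), so the new join-irreducible element $e'=\lift h$ lies strictly below \emph{every} former strict upper bound of $e$. Hence, unless $e$ is maximal in $P$, no geometry on your $Q$ can have $\Lat{\wha G}\cong K$: the step-(2) isomorphism breaks exactly at the pairs $(e',u)$ with $e<_P u$. The paper's order \eqref{pbxDfRrrh} keeps the up-set of $e$ intact and shrinks only its down-set to $H$.

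Second, the length argument in your step (3) is false. The chain $\length{\Lat{\wha G}}=\length{\alg G}=\length Q=\length P=\length L$ fails at both middle equalities: the length of a semimodular lattice is not the length of its poset of join-irreducible elements (a geometric lattice of length $n$ has an antichain of atoms as its join-irreducibles, a poset of length $0$); if length could be read off that poset, ``length-preserving'' would never be an issue in the first place. The paper instead exhibits a maximal chain of $\alg F$ passing through $\sodeal P e$ and $\sideal P e$ and checks, using \eqref{eq:mrnVvlSgrnFrkBrdN}, that no member of $\alg N$ can be inserted into it, so the chain stays maximal in $\alg G$; the Jordan--H\"older chain condition then yields $\length{\alg G}=\length{\alg F}$. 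Relatedly, the sublattice property does not come for free either: a priori $\alg F$ is only a \emph{meet}-subsemilattice of $\alg G$ (meets are intersections in both, but joins are closures with respect to different families, so joins computed in $\alg G$ could shrink), and the paper closes this gap by citing Wild's result \eqref{pbx:wvdcnvbrdfnkkg}, which needs the already-established equality of lengths. Finally, what you present as a routine verification in step (1) --- axiom (CP) for $\wha G$ --- is where almost all of the paper's work lies: a three-case analysis resting on the auxiliary facts \eqref{eq:hmgRsknlnKsLt} and \eqref{eq:mrnVvlSgrnFrkBrdN}; a complete execution of your plan would have to reproduce that analysis.
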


The assumption that $e\notin\At L$ is only for emphasis; indeed, this assumption follows from another one, $h<\lcov L e$. Although most applications of Theorem~\ref{thmmain} do not need the following lemma, we formulate this lemma here since we can  economically  prove it jointly with the theorem.

\begin{lemma}\label{lemma:meet} 
With the notation and assumptions of Theorem~\ref{thmmain}, Definitions~\ref{def:lower}, and Definition  \ref{def:Lext}, we have that  $e':=\lift h$ and  
\begin{equation}x\wedge_K y = 
\begin{cases}
x\wedge_L y,&\text{if }x,y\in L,\cr
\lift{(\alit x\wedge_L \alit y)},&\text{if }x, y\in N,\cr
x=\lift{(\alit x\wedge_L y)},&\text{if }x\in N\text{ and  }e\leq y\in L,\cr
\alit x\wedge_L y,&\text{if }x\in N\text{ and  }e\not\leq y\in L,\cr
y=\lift{( x\wedge_L \alit y)},&\text{if }e\leq x\in L  \text{ and  } y\in N,\cr
x\wedge_L \alit y,&\text{if }e\not\leq x\in L  \text{ and  } y\in N
\end{cases}
\label{eq:hbrDtmNklmDnSz}
\end{equation} 
for any $x,y\in K=\Lext$.
\end{lemma}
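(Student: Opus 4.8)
The plan is to prove Lemma~\ref{lemma:meet} jointly with Theorem~\ref{thmmain}, as the author indicates, by first establishing that $(K,\leq_K)$ is a poset and then showing that the formula \eqref{eq:hbrDtmNklmDnSz} genuinely computes greatest lower bounds. First I would verify that $\leq_K$ as defined in \eqref{eq:zSznJznKgTsh} is reflexive, antisymmetric, and transitive. Reflexivity is immediate from the first two cases. For antisymmetry and transitivity the delicate interactions are the mixed cases involving one element of $L$ and one of $N$; here the key algebraic fact to keep in hand is that for $x\in D$ we have $\alit x\not\leq_L$ anything that lies above $e$, more precisely $e\not\leq_L y$ whenever $\alit x\preceq_L y$, by the defining condition \eqref{eq:wmVkrjfNbBj} of $D$. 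I would also record the consequence that for $x\in D$, the elements $\alit x$ and $\alit x\svee L e$ form a covering pair (since $e\in\Jir L$ is not an atom and $\alit x\in\filter L h$), which is what makes the ``lift'' sit just one step below its image.

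Next I would confirm the claimed value $e'=\lift h$: since $h<\lcov L e$ and $h\in D$ (as $h$ itself satisfies the defining property, being below $\lcov L e$), the element $\lift h\in N$ exists, and one checks directly from \eqref{eq:zSznJznKgTsh} that its unique lower cover in $K$ is $h$, so $\lcov K{e'}=h$ as required by Definition~\ref{def:lower}\eqref{def:lowerc}. Establishing that $e'$ is join-irreducible in $K$ with exactly this lower cover, and that no other join-irreducible structure is disturbed, is exactly what ties the construction back to Definition~\ref{def:lower}, but the meet formula is the computational heart, so I would focus there.

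For the meet formula itself I would proceed case by case according to the membership of $x$ and $y$ in $L$ or $N$. The two pure cases ($x,y\in L$ and $x,y\in N$) are routine: the first is inherited from $L$, and the second follows because the map $\alit{(\cdot)}$ is an order isomorphism $N\to D$ and $D$ is itself meet-closed enough for the purpose (one checks $\alit x\wedge_L\alit y\in D$, which again rests on \eqref{eq:wmVkrjfNbBj}). The genuinely interesting cases are the mixed ones with $x\in N$, $y\in L$, which split according to whether $e\leq_L y$. The strategy is to propose the candidate given in \eqref{eq:hbrDtmNklmDnSz} as a lower bound — verifying it lies below both $x$ and $y$ using \eqref{eq:zSznJznKgTsh} — and then show any common lower bound $z\in K$ lies below the candidate, treating $z\in L$ and $z\in N$ separately and invoking semimodularity of $L$ together with the covering relation $\alit x\prec_L \alit x\svee L e$.

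\textbf{Main obstacle.} I expect the hardest step to be the subcase $x\in N$ with $e\not\leq_L y\in L$, where the meet drops out of $N$ back into $L$ (the answer is $\alit x\wedge_L y$ rather than a lifted element). The subtlety is that a common lower bound $z$ could a priori lie in $N$, and one must rule out that some lifted element strictly between $\alit x\wedge_L y$ and the candidate could serve as a larger lower bound; this forces a careful use of the fourth clause of \eqref{eq:zSznJznKgTsh} (an element of $N$ lies below $y\in L$ only when $\alit z\svee L e\leq_L y$) combined with the hypothesis $e\not\leq_L y$ to derive a contradiction. Symmetrically the case $e\not\leq_L x\in L$, $y\in N$ demands the same argument. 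Once these are settled the remaining mixed cases where $e\leq y$ (giving $x\wedge_K y=x$) follow more easily, since then $x\in N$ is itself below $y$ by the third clause of \eqref{eq:zSznJznKgTsh}.
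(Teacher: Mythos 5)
Your plan is a genuinely different route from the paper's. The paper never verifies directly that \eqref{eq:zSznJznKgTsh} defines a lattice order: it translates $D$, $N$, and $K$ into the geometric language of Section~\ref{sect:faigle} (obtaining $\alg D$, $\alg N$, $\alg G$), checks the axioms (FL)--(CP) for $\wha G$, obtains lattice-ness and semimodularity for free from Proposition~\ref{prop:sDlspFl}, and then reads off \eqref{eq:hbrDtmNklmDnSz} as a transcript of the intersection formula \eqref{eq:lspszKlBgLwnqPHrtTs}, meets in a closure system being intersections. A direct order-theoretic verification such as yours is feasible in principle, but your outline rests on two claims that are false, and both are used at load-bearing points.

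First, the ``covering pair'' you record is the exact opposite of what the definition of $D$ gives. Let $z\in D$ and suppose $w$ is a cover of $z$ with $e\leq_L w$; then $z<_L z\vee_L e\leq_L w$ forces $w=z\vee_L e$. Hence the defining condition \eqref{eq:wmVkrjfNbBj} (no $w$ with $z\preceq_L w$ lies above $e$) says precisely that $z\vee_L e$ does \emph{not} cover $z$, that is, $\length{[z,z\vee_L e]_L}\geq 2$. This is exactly what creates room for the new element $\lift z$ strictly between $z$ and $z\vee_L e$; if your covering claim held, the insertion would contradict length-preservation. Moreover the gap can exceed $2$: in the product of the chains $\set{0<1<2<3}$ and $\set{0<1}$, with $e=(3,0)$ and $h=(0,0)$, the element $z=(0,1)$ lies in $D$ and $\length{[z,z\vee_L e]_L}=3$; compare the fourth line of Lemma~\ref{lemma:bdscgrpcv}, where $\lift z\prec_K z\vee_L e$ holds only when this length equals $2$. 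Since you intend to invoke ``the covering relation $\alit x\prec_L \alit x\vee_L e$'' together with semimodularity in the uniqueness part of your argument, this is a genuine gap, not a notational slip.

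Second, the case you set aside as easy, $x\in N$ and $e\leq_L y\in L$, is mishandled: the comparability $x\leq_K y$ is governed by the \emph{fourth} clause of \eqref{eq:zSznJznKgTsh}, not the third, and it requires $\alit x\vee_L e\leq_L y$, hence $\alit x\leq_L y$, which does not follow from $e\leq_L y$ because $D\not\subseteq\sideal L e$ in general. Concretely, in the product of two chains $\set{0<1<2}$, with $e=(2,0)$ and $h=(0,0)$, the element $(0,1)$ belongs to $D$; for $x=\lift{(0,1)}$ and $y=e$ one gets $x\not\leq_K y$ and $x\wedge_K y=\lift{(0,0)}\neq x$. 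So the equality $x\wedge_K y=x$ cannot be established your way; indeed, this example shows that the first equalities printed in the third and fifth lines of \eqref{eq:hbrDtmNklmDnSz} (and in \eqref{eq:lspszKlBgLwnqPHrtTs}) hold only under the extra hypothesis $\alit x\leq_L y$. What is provable, and what a repaired lower-bound argument delivers, is $x\wedge_K y=\lift{(\alit x\wedge_L y)}$, where $\alit x\wedge_L y\in D$ by the convexity of $D$ together with $h\leq_L \alit x\wedge_L y$ (Lemma~\ref{lemma:cNvSx}). Your remaining cases ($x,y\in L$; $x,y\in N$; and $x\in N$ with $e\not\leq_L y$, which is in fact the easiest mixed case, since the fourth clause instantly excludes lower bounds lying in $N$) are in order, though the meet-closedness of $D$ you use for $x,y\in N$ itself needs the convexity argument, not just a citation of \eqref{eq:wmVkrjfNbBj}.
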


Before proving the theorem and the lemma above, note that the theory of planar semimodular 
lattices\footnote{{see \texttt{http://www.math.u-szeged.hu/\textasciitilde{}czedli/m/listak/publ-psml.pdf} or see 
the appendix of  \texttt{http://arxiv.org/abs/2107.10202 }}} was essential to find an appropriate construction. 
Also,  it follows immediately from the structural description given  by Cz\'edli~\cite{czg111patchext} 
or   Cz\'edli and Schmidt~\cite{czgschtvisual}, or from the diagrammatic approach developed in Cz\'edli~\cite{czg132diagrrect} (the use of which is exemplified, say, in Cz\'edli~\cite{czglamps}) 
that $L$ in each of our figures is a semimodular lattice. (The semimodularity of $K$ in the same figure follows from Lemma~\ref{lemma:bdscgrpcv}.)

\begin{proof}[Proof of Theorem~\ref{thmmain} and Lemma~\ref{lemma:meet}]
Let $\wha F=(P,\alg F)=\Geom L=(P_L,\alg F_L)$ be the geometry of $L$, see Definition~\ref{def:L2G}. In addition to the usual lattice theoretical tools, the canonical correspondence provided by Proposition~\ref{prop:sDlspFl} allows us to translate some lattice theoretical objects like \eqref{eq:wmVkrjfNbBj}--\eqref{eq:hmRsnCscPrmTs} to the geometrical language, and we can benefit from the axioms (FL)--(CP) as well as from (DC)--(CE) from Lemma~\ref{lemma:nFjRjf}. Let $H:=P\cap\sideal L h\in \alg F$; note that $h=\sbigvee L H$. 
The translation of 
 \eqref{eq:wmVkrjfNbBj} is trivial but we have to use \eqref{eq:zSznJznKgTsh} to rewrite  \eqref{eq:hmRsnCscPrmTs} into \eqref{eq:nszfrkdnzbgrrbmk} below.
(The role of \eqref{eq:zSznJznKgTsh} is to  guarantee that the set theoretical inclusion ``$\subseteq$'' corresponds to ``$\leq_K$''.)
\begin{align}
&\alg D:=\set{X\in \alg F: H\subseteq X\text{ and }
(\forall Y\in\alg F)\,(X\fpreceq Y\then e\notin Y) },
\label{eq:szKvrHnmtpth}
\\
&\alg N:=\set{\set {e}\cup X: X\in \alg D}\quad\text{ and }\quad\alg G:=\alg F\cup \alg N.
\label{eq:nszfrkdnzbgrrbmk}
\end{align}
Observe that (CE) from Lemma~\ref{lemma:nFjRjf} allows us to rewrite the definition of $\alg D$ to 
\begin{equation}
\alg D:=\set{X\in \alg F: H\subseteq X\text{ and }
(\forall u\in P)\,(\sodeal P u\subseteq X\then  e\notin \clf{\set u\cup X } )}.
\label{eq:DfjbtchgrNdwhDsRh}
\end{equation}
For convenience, let us agree that for $X\in\Pow P$, we let 
\begin{equation}
\plue X:=\set e\cup X\quad \text{ and }\quad \mine X:=X\cup \set e.
\label{eq:mplnvNtk}
\end{equation}
Note that for $X\in\alg N$  and $Y\in\alg D$, we have that $\mine X \in \alg D$ and $\plue Y \in\alg N$.
Note also that \eqref{eq:mplnvNtk} harmonizes with the notation $\alit x$ and $\lift y$ used in Definition~\ref{def:Lext}.
The canonical correspondence between $L$ and $\wha F$ will often be used implicitly. 
As opposed to Definition~\ref{def:lower}\eqref{def:lowerb}, we let $R:=P=\Jir L$; at present, it is only an underlying set. We define  a relation  $\leq_R$ on $R$ by
\begin{equation}y<_R x \defiff \left\{
\parbox{3.3cm}{
$y<_P x\neq e$, \quad or \\
$x \in H$ \text{ and } $y=e$.}\right.
\label{pbxDfRrrh}
\end{equation}
If $x,y,z\in R$ such that $x<_R y<_R z$, then we easily obtain that $x<_R z$; either since $e\notin\set{x,y,z}$, or since $H\in \alg F$ is a down-set in $P$ and $H\subseteq \sodeal P e$. Hence, $R=(R,\leq_R)$ is a poset. Observe that 
\begin{equation}
\text{if $X\in\alg F$ and $\sodeal P e\subseteq X$, then $X \notin \alg D$.}
\label{eq:mrnVvlSgrnFrkBrdN}
\end{equation}
Indeed, $H\subseteq \sodeal P e$, $\,|\sideal P e\setminus\sodeal P e|=|\set e|=1$ implies that $\sodeal P e\fprec \sideal P e$, and  $\,\sodeal P e\fprec \sideal P e\ni e$ gives that $\sodeal P e\notin\alg D$. Hence, \eqref{eq:mrnVvlSgrnFrkBrdN} follows from  \eqref{eq:szKvrHnmtpth}.

We claim that $\alg F\cap\alg N=\emptyset$. For the sake of contradiction, suppose that there is a $Y$ belonging to $\alg F\cap\alg N$. Then $Y\in \alg N$ gives that $e\in Y$ and $\mine Y\in\alg D$. However, then $Y\in\alg F$ 
and (F$\dwn$) lead to $\sideal P e\subseteq Y$, whence 
$\sodeal P e\subseteq \mine Y\in\alg D$ contradicts \eqref{eq:mrnVvlSgrnFrkBrdN}. This shows that $\alg F\cap\alg N=\emptyset$. 
A subset $\alg W_1$ of a set $\alg W_2$ of sets is \emph{convex} if for all $X,Z\in \alg W_1$ and $Y\in \alg W_2$, $X\subseteq Y\subseteq Z$ implies that $Y\in\alg W_1$.  We claim that 
\begin{equation}
\parbox{7cm}{$H\in\alg D$, $\alg D$ is a convex subset of $\alg F$, and each of $\alg N$ and $\alg D\cup\alg N$ is a convex subset of $\alg G$.}
\label{eq:hmgRsknlnKsLt}
\end{equation}
To show this, observe that 
 $h<\lcov L e\prec_L e$ gives that $e\notin H$ and (by the canonical correspondence)  the length of the interval $[H,\sideal P e]_{\alg F}$ is at least 2. 
If we had that $e\in  Y$ and $H\fprec Y$ for some $Y\in \alg F$, then (F$\dwn$) would give that $[H,\sideal L e]_{\alg F}$ is a subinterval of the prime interval $[H,Y]_{\alg F}$, which would be a contradiction. Hence, $H\in\alg D$. Clearly, $H$ is the least member of $\alg D\cup \alg N$. 
Thus, to show the convexity of $\alg D$, assume that  $H\subseteq X\subseteq Y$, $X\in\alg F$, and $Y\in \alg D$. If we had a   $u\in P$ with  $\sodeal P u\subseteq X$ and $e\in\clf{\set u\cup X}$, then $Y\in\alg D$ together with  $\sodeal P u\subseteq Y$ and $e\in\clf{\set u\cup Y}$ would contradict \eqref{eq:DfjbtchgrNdwhDsRh}. Hence, again by  \eqref{eq:DfjbtchgrNdwhDsRh}, $X\in\alg D$, showing that  $\alg D
$ is a convex subset of $\alg F$. 
Now assume that $X\subseteq Y\subseteq Z$, $X,Z\in\alg N$, and $Y\in \alg G$. Then $\mine X,\mine Z\in\alg D$.  By $X\subseteq Y$, we have that $e\in Y$, whence $Y=\plue{(\mine Y)}$. Also, $\mine X\subseteq\mine Y\subseteq \mine Z$. The convexity of $\alg D$ gives that $\mine Y\in\alg D$, whereby $Y=\plue{(\mine Y)}\in\alg N$. Thus, $\alg N$ is convex. 
Next,  assume that $H\subseteq X\subseteq Y$, $X\in \alg G$, and $Y\in \alg D\cup\alg N$; we need to show that $X\in \alg D\cup\alg N$. 
If $Y\in \alg D$, then $e\notin Y\supseteq X$ yields that $X\in \alg F$, whereby the convexity of $\alg D$ in $\alg F$ implies that $X\in\alg D\subseteq \alg D\cup\alg N$, as required. Hence, we can assume that $Y\in \alg N$. We can assume that  $X\not\subseteq \mine Y$ since otherwise
$X\in\alg D\subseteq \alg D\cup\alg N$ by the convexity of $\alg D$ again.  Then $e\in X$, but $X\in \alg F$ is impossible since it would lead to $\sodeal P e\subseteq \mine X\subseteq \mine Y \in\alg D$, contradicting \eqref{eq:mrnVvlSgrnFrkBrdN}.  Hence, $X\in \alg G\setminus\alg F=\alg N\subseteq \alg D\cup\alg N$, proving  \eqref{eq:hmgRsknlnKsLt}.
Since $h<\lcov L e$,  $\lcov L e=\sbigvee L (\sideal P{\lcov L e)}= \sbigvee L(\sodeal P e)$, and $h=\sbigvee L H$ yield that 
$H\subset \sodeal P e$, it follows immediately from \eqref{pbxDfRrrh} that, for $u\in R\setminus\set e$ and $v,w\in R$,
\begin{equation}
\parbox{7.0cm}{$\sideal R u=\sideal P u$, \ $\sodeal R u=\sodeal P u$,  \ $\sodeal R e=H\subset\sodeal P e$, \ $\sideal R e=\plue H\subset\sideal P e$, \  and \ $v\leq_R w\then v\leq_P w$.}
\label{eq:smmHrGsGnrsz}
\end{equation}
Hence, $\wha G$ satisfies (F$\dwn$) since so does $\wha F$. We know that $\length{\alg F}=\length L$ is finite. Let $\alg C\subseteq \alg G$ be a chain. Then $|\alg C\cap \alg F|\leq 1+ \length{\alg F}$. Since $\alg N$ and $\alg D$ are order isomorphic (with respect to $\subseteq$),  
$|\alg C\cap \alg N|\leq 1+ \length{\alg D}\leq  1+ \length{\alg F}$. Hence, $|\alg C|\leq 2+2\cdot\length{\alg F}$, showing that (FL) holds for $\wha G$.

Since (F$\cap$) holds for $\wha F$ and $H$ is the least element of $\alg D \cup\alg N$, it follows easily from  
\eqref{eq:hmgRsknlnKsLt}, $H\subseteq \sideal P e$, and 
the validity of (F$\dwn$) for $\wha F$ that, for $X,Y\in\alg G$,   
\begin{equation}X\cap Y = 
\begin{cases}
X\cap Y\in \alg F,&\text{if }X,Y\in \alg F,\cr
\plue{(\mine X\cap \mine Y)}\in \alg N,&\text{if }X,Y\in \alg N,\cr
X= \plue{(\mine X\cap Y)}\in \alg N,&\text{if }X\in \alg N\text{ and  }e\in Y\in \alg F,\cr
\mine X\cap Y\in\alg F,&\text{if }X\in \alg N\text{ and  }e\notin Y\in \alg F,\cr
Y= \plue{(X\cap \mine Y)}\in\alg N,&\text{if }e\in X\in \alg F\text{ and  }Y\in \alg N,\cr 
X\cap \mine Y\in\alg F,&\text{if }e\notin X\in \alg F\text{ and  } Y\in \alg N.
\end{cases}
\label{eq:lspszKlBgLwnqPHrtTs}
\end{equation}
In particular, \eqref{eq:lspszKlBgLwnqPHrtTs} implies that $\wha G$ satisfies (F$\cap$).
Since  $H\in \alg D$ by \eqref{eq:hmgRsknlnKsLt}, we obtain from  \eqref{eq:smmHrGsGnrsz} that $\sodeal R e\in \alg G$ and $\sideal R e=\plue H\in \alg N\subseteq \alg G$. These facts and \eqref{eq:smmHrGsGnrsz} yield that $\wha G$ satisfies (Pr). 

Next, to prove that $\wha G$ satisfies (CP), assume that
\begin{equation} 
\text{$X\in \alg G$, $q\in R\setminus X$, and $\sodeal R q\subseteq X$.}
\label{eq:sZhLrDcSlZkR}
\end{equation} 
We need to find a $Y\in\alg G$ such that $X\gprec Y\ni q$. There are three cases to consider. 

\begin{case}\label{case1} 
In addition to \eqref{eq:sZhLrDcSlZkR}, we assume that $q=e$. Since $e=q\notin X$, we have that $X\in \alg F$. If $X\in\alg D$, then $X\fprec \plue X=:Y\ni e=q$ since $|Y\setminus X|=1$. So assume that  $X\in\alg F\setminus\alg D$. We know that $H=\sodeal R e=\sodeal R q\subseteq X$. This fact, $e\notin X$, and \eqref{eq:szKvrHnmtpth} give a $Y\in \alg F$ with $X\fprec Y\ni e=q$. We only need to show that $X\gprec Y$. For the sake of contradiction, suppose that $X\subset Z\subset Y$ for some $Z\in \alg N$. 
Since $e\notin X\notin \alg D$, $H\subseteq X\subseteq \mine Y\in\alg D$  contradicts \eqref{eq:hmgRsknlnKsLt}. Thus, $X\gprec Y$, as required.
\end{case}

\begin{case}\label{case2} In addition to \eqref{eq:sZhLrDcSlZkR}, we assume that $q\neq e$ and $X\in \alg F$. Then  \eqref{eq:smmHrGsGnrsz} gives that $\sodeal P q=\sodeal R q\subseteq X$. Applying (CP) to $\wha F$, we obtain a $Y\in\alg F$ such that $X\fprec Y\ni q$. 
So it suffices to show that $X\gprec Y$. For the sake of contradiction, suppose that this is not so, and pick a $Z\in\alg N$ such that $X\subset Z\subset Y$. We know from \eqref{eq:nszfrkdnzbgrrbmk} that  $\mine Z\in\alg D$.
If $e\in X$, then $X\in\alg F$ and (F$\dwn$) give that
$\sideal P e\subseteq X$, whence $\sodeal P e\subseteq \mine X\subseteq \mine Z\in\alg D$ contradicts \eqref{eq:mrnVvlSgrnFrkBrdN}. Thus, $e\notin X$, and we have that $X\subseteq \mine Z\subseteq Z\subset Y$. Since  $\mine Z\in\alg D\subseteq\alg F$ and $X\fprec Y$, it follows that 
$X=\mine Z\in\alg D$. But then \eqref{eq:szKvrHnmtpth} and $X\fprec Y$ lead to $e\notin Y$, contradicting $e\in Z\subseteq Y$.  Thus, $X\gprec Y$, as required.
\end{case}

\begin{case}\label{case3} In addition to \eqref{eq:sZhLrDcSlZkR}, we assume that $q\neq e$ and $X\in \alg N$. By \eqref{eq:smmHrGsGnrsz}, $\sodeal P q\subseteq X$. 
Observe that $e\notin\sodeal P q$ since otherwise $\sodeal P e\subseteq \sodeal P q=\sodeal R q\subseteq \mine X\in \alg D$ would contradict \eqref{eq:mrnVvlSgrnFrkBrdN}. Since  $e\notin\sodeal P q$,
 \eqref{eq:sZhLrDcSlZkR} gives that $\sodeal P q=\sodeal R q\subseteq \mine X\in \alg D\subseteq \alg F$ and $q\notin \mine X$. Thus, applying (CP) to $\wha F$, we can pick a $Y\in\alg F$ such that $\mine X\fprec Y\ni q$.  Note that $\alg D\ni \mine X\fprec Y$ and \eqref{eq:szKvrHnmtpth} imply that $e\notin Y$. If $Y\in\alg D$, then \eqref{eq:hmgRsknlnKsLt} easily implies that $X\gprec \plue Y\ni q$, and $\plue Y$ does the job instead of $Y$.
Thus, we can assume that $Y\in \alg F\setminus \alg D$.
Combining this with $H\subseteq \mine X\subset Y$ and  \eqref{eq:szKvrHnmtpth}, we can pick an $E\in \alg F$ such that $Y\fpreceq E\ni e$. In fact, $Y\fprec E$ since $e\notin Y$ but $e\in E$. Observe that  $q\in Y\subseteq E$,   $X=\set e\cup\mine X\subseteq E\cup Y=E$, but $X\neq E$ since $q\in E\setminus X$. That is, $X\subset E\ni q$. Hence, to complete Case~\ref{case3}, we only need to show that $X\gprec E$. For the sake of contradiction, suppose that there is an $S\in\alg G$ such that $X\subset S\subset E$. Note that $e\in S$ since $X\in\alg N$ gives that $e\in X\subset S$. There are two subcases depending on whether $S$ is in $\alg F$ or not. 
First, assume that $S\in\alg F$. Then 
$\mine X\subset X\subset S$ gives that $\mine X\subset S$. By \eqref{eq:szKvrHnmtpth},   $\mine X\in\alg D$ and $e\in S$ exclude that $\mine X\fprec S$. Hence, $\length{[\mine X,S]_{\alg F}}\geq 2$. Combining this with $S\subset E$, we have that 
$\length{[\mine X,E]_{\alg F}}\geq 3$. But this contradicts the semimodularity of $\alg F\cong L$ by (JHCC) since $\mine X\fprec Y\fprec E$.

Second, assume that $S\in\alg N$. Then both $X$ and $S$ contain $e$ since they belong to $\alg N$, whence 
$X\subset S$ turns into $\mine X\subset \mine S$.  
Furthermore, $\mine S\subset S\subset E$, and since  $\mine S\in \alg D$, \eqref{eq:szKvrHnmtpth} shows that $\mine S\not\fprec E$. Combining this with $\mine X\subset \mine S$ similarly to the first subcase, we obtain that $\length{[\mine X,E]_{\alg F}}\geq 3$, and we have already seen how this leads to a contradiction. This completes Case~\ref{case3}.
\end{case}

It follows from Cases~\ref{case1}--\ref{case3} that (CP) holds for $\wha G$ and $\wha G$ is a \eqref{pbx:RdGvBlJdX}-geometry. Let $K:=\Lat{\wha G}$. We are going to show that $K:=\Lext$ and it is an extension of $L$ that lowers $e$ to a cover of $h$. 

By Proposition~\ref{prop:sDlspFl}, $K$  is a semimodular lattice of finite length. To show that $\length K=\length L$, it suffices to find a maximal chain in $\alg F$ that is also a maximal chain in $\alg G$. First, take a maximal chain in $X_0\fprec X_1\fprec\dots\fprec X_t=\sodeal P e$ in the interval $[\emptyset,\sodeal P e]_{\alg F}$. Since $e\notin X_i$ for $i=0,1,\dots,t$, no member of $\alg N$ can be inserted into this chain, that is, the coverings here are coverings also in $\alg G$. Since $|\sideal P e\setminus\sodeal P e|=|\set e|=1$, we have that $X_t=\sodeal P e\fprec \sideal P e=:X_{t+1}$ and $X_t\gprec X_{t+1}$. 
Next, in the interval $[\sideal P e,P]_{\alg F}$, we take a maximal chain $\sideal P e=X_{t+1}\fprec \dots \fprec X_k=P$. It follows 
easily from \eqref{eq:mrnVvlSgrnFrkBrdN} that no member of $\alg N$ can be inserted, whence $X_{t+1}\gprec \dots \gprec X_k$. Consequently, $\set{X_0,X_1,\dots, X_k}$ is a maximal chain both in $\alg F$ and $\alg G$, whereby $\length K=\length L$. It was proved by
Wild~\cite{wild} that 
\begin{equation}
\parbox{11cm}{whenever  $L'$ and $K'$ are semimodular lattices of the same finite length and $L'$ is a meet-subsemilattice of $K$', then $L'$ is a sublattice of $K'$;}
\label{pbx:wvdcnvbrdfnkkg}
\end{equation}
see also Cz\'edli~\cite{czg:faigle} for an outline of the proof.
(Note that Wild~\cite{wild} proved even more.)  Since the meet is the intersection both in $\alg F\cong L$ and $\alg G\cong K$,
or because of the first line of \eqref{eq:lspszKlBgLwnqPHrtTs}, Wild's above-mentioned result yields that $L$ is a sublattice of $K$. 
The validity of \eqref{eq:hbrDtmNklmDnSz} follows immediately from the fact that it is only a transcript of \eqref{eq:lspszKlBgLwnqPHrtTs}.

It is clear that $\Jir K$ is
\begin{equation}
\Jir {\alg G}=(\set{\sideal R u: u\in R},\subseteq)\text{; in fact, for any \eqref{pbx:RdGvBlJdX}-geometry }(R,\alg G),
\label{eq:sWzhGrjmGdrs}
\end{equation}
not only for our $\wha G=(R,\alg G)$.
Indeed, $\sodeal R u$ is clearly the unique lower cover of $\sideal R u$, whence the set given in \eqref{eq:sWzhGrjmGdrs} consists of join-irreducible members of $\alg G$, and (F$\dwn$) makes it clear that each member of $\alg G$ is the join (in fact, the union) of \emph{these} join-irreducible members of $\alg G$. Let $Q$ denote the poset $\Jir K$, and let $e':=\sideal R e$.  By the canonical correspondence and \eqref{eq:sWzhGrjmGdrs}, $Q$ is order isomorphic to $R$. 
Since $\sodeal R e=H$ is the only lower cover of $e'$ and $H$ corresponds to $h$,  we have that $\lcov K {e'}=h$. By \eqref{eq:smmHrGsGnrsz},  $\Jir{\alg G}\setminus\set{\sideal R e}=
\Jir{\alg G}\setminus \set{\sideal P e}$. Hence, the canonical correspondence gives that condition \eqref{def:lowerb} of Definition~\ref{def:lower} holds. This completes the joint proof of Theorem~\ref{thmmain} and Lemma~\ref{lemma:meet}.
\end{proof}

\section{A more general theorem on  length-preserving extensions}\label{sect:thlms}

Two chains, $C_1$ and $C_2$, in a poset are \emph{parallel} if $x_1\parallel x_2$ for all $x_1\in C_1$ and $x_2$ in $C_2$. (As usual, $x_1\parallel x_2$ is the conjunction of $x_1\not\leq x_2$ and  $x_2\not\leq x_1$.) Parallel chains are  \emph{set-theoretically disjoint}, that is, their intersection  is $\emptyset$. (In Lemma~\ref{lemma:ngyhjnlvvgn} and thereafter, there will occur a lattice-theoretical concept of disjointness.)  Every poset is the union of a set of pairwise parallel chains; for example, we can take all singleton chains but, usually, there are other possibilities, too. With the help of our $\Lext$ construction, we are going to prove the following result.

\begin{theorem}\label{thm:infrect} 
Let $L$ be a semimodular lattice of finite length. Assume that $\Jir L$ is the union of a set $\set{C_i: i\in I}$  of pairwise \emph{set-theoretically disjoint} chains. Then $L$ has a length-preserving semimodular extension $K$ such that for a set $\set{C_i': i\in I}$ of pairwise \emph{parallel} chains of $\Jir K$, we have that $\Jir K=\bigcup\set{C_i': i\in I}$ and, for all $i\in I$, $|C_i'|=|C_i|$.
\end{theorem}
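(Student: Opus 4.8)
The plan is to carry out everything on the level of join-irreducible posets and to reach the conclusion by iterating Theorem~\ref{thmmain}. First I would observe that, transporting the underlying set of $\Jir L$ through the bijections $e\mapsto e'$ supplied by Definition~\ref{def:lower}\eqref{def:lowerb}, the assertion to be proved says exactly that we can pass to a \eqref{txt:convSMLFL}-lattice whose poset of join-irreducible elements keeps the order inside each chain $C_i$ but in which no element of one chain is comparable to an element of another; together with the (preserved) set-theoretic disjointness this is precisely parallelism. Since the order inside a chain already agrees with the target order, the only thing to be eliminated is the set of \emph{cross-chain comparabilities}: the pairs $x<y$ in $\Jir L$ with $x$ and $y$ in different chains.

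The elementary step is a single lowering. Call a join-irreducible element \emph{clean} if no join-irreducible element strictly below it lies in a different chain. Take a non-atom join-irreducible $e=c_m$ in a chain $C_i=(c_1<c_2<\cdots)$, let $c:=c_{m-1}$ be its predecessor in $C_i$ (with $c:=0$ if $e$ is least in $C_i$), and suppose $c$ is already clean, so that the join-irreducible elements $\leq c$ are precisely the $C_i$-predecessors $\set{c_1,\dots,c_{m-1}}$ of $e$. If $e$ is clean, do nothing; otherwise $e$ has a cross-chain element below it, which (since $c$ is clean) cannot lie under $c$, whence $c$ lies strictly below the lower cover of $e$, and Theorem~\ref{thmmain} applies with $h:=c$. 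By the description of the new order in \eqref{pbxDfRrrh} and the identities \eqref{eq:smmHrGsGnrsz} from the proof of Theorem~\ref{thmmain}, this lowering changes the join-irreducible poset in exactly one way: the down-set of $e$ shrinks to $\set{c_1,\dots,c_{m-1}}$, while the up-set of $e$ and the down-set of every other join-irreducible element remain untouched and, because $\leq_R\ \subseteq\ \leq_P$, no new comparability is ever created. Thus precisely the cross-chain pairs $(x,e)$ are destroyed, the predecessors of $e$ survive, and $e$ becomes clean; it stays clean forever, since no later lowering enlarges a down-set or modifies the down-set of an element other than the one being lowered. By Theorem~\ref{thmmain} the step is a length-preserving semimodular sublattice extension, and it keeps the partition into chains and every cardinality $|C_i|$ intact.

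To iterate, I would fix a well-ordering of $\Jir L$ refining the height order of $\Jir L$ (heights are finite because $\length L<\infty$), so that every element precedes all elements of strictly larger height, in particular its own in-chain successors. Processing the join-irreducible elements in this order and performing the elementary step on each, the hypothesis is always met: when $e$ is reached, its predecessor $c$ has smaller height, was already processed, and has stayed clean. If $\Jir L$ is finite this is a finite sequence of lowerings. In general it is a transfinite recursion in which, at a limit stage $\lambda$, one takes the union $L_\lambda:=\bigcup_{\alpha<\lambda}L_\alpha$. Because a same-length sublattice extension of \eqref{txt:convSMLFL}-lattices is automatically cover-preserving (inserting a new element into a maximal chain would otherwise increase the length), coverings, the value $\length L$ and semimodularity all pass to these unions, each $L_\alpha$ is a sublattice of $L_\lambda$, and the join-irreducible poset of $L_\lambda$ is the union of the intermediate ones. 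Hence every stage, and the final lattice $K$, is a length-preserving semimodular extension of $L$; and once the recursion is exhausted every join-irreducible element is clean, which is exactly the statement that the chains $C_i'$ are pairwise parallel with $\bigcup\set{C_i':i\in I}=\Jir K$ and $|C_i'|=|C_i|$.

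The delicate point is the verification in the elementary step that one lowering alters the join-irreducible poset in exactly the controlled way claimed; this is where the internal combinatorics of $\Lext$ — concretely \eqref{pbxDfRrrh} and \eqref{eq:smmHrGsGnrsz} — is indispensable. The main obstacle I anticipate is the bookkeeping for infinite $\Jir L$: one must check carefully that the transfinite chain of lowerings has well-behaved direct limits (finite length, semimodularity and the predicted join-irreducible poset survive the unions) and that cleanness is genuinely stable under all subsequent steps, so that the construction really terminates at a lattice free of cross-chain comparabilities.
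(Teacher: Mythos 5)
Your proposal is correct, and its elementary step is the same as the paper's: lower $e$ to a cover of its in-chain predecessor $c$ via Theorem~\ref{thmmain}, the inequality $c<\lcov L e$ being forced because a cross-chain join-irreducible element below $e$ cannot lie below $c$. Where you genuinely diverge is in how the steps are iterated. The paper's Claim~\ref{claim:hncSd} merely produces one lowering that strictly shrinks the ordering of the join-irreducible poset towards $\rho_\ast$, and then, instead of any recursion, it applies Zorn's lemma to the poset $T$ of all geometries $((J,\tau),\alg G)$ with $\rho_\ast\subseteq\tau\subseteq\wk\rho$; maximality together with Claim~\ref{claim:hncSd} forces $\tau=\rho_\ast$ at a maximal element. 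You instead run a transfinite recursion along a height-respecting well-order, with the ``cleanness'' invariant ensuring that each join-irreducible element needs to be lowered at most once; this is more explicit (it exhibits $K$ as the limit of a well-ordered sequence of single lowerings, each killing \emph{all} cross-chain pairs under the processed element), but its cost is exactly what you flag as the main obstacle: proving that directed unions of length-preserving semimodular $\set{0,1}$-extensions are again such extensions, with the expected poset of join-irreducibles. That verification is the real content of the infinite case, and it is precisely what the paper's Claim~\ref{claim:zorn} accomplishes in the geometry language, where the axioms (FL)--(CP) pass to directed unions (with Wild's result \eqref{pbx:wvdcnvbrdfnkkg} used for (CP)) and where the join-irreducible poset of the limit is automatic by \eqref{eq:sWzhGrjmGdrs}; your sketched arguments (cover-preservation from equality of lengths, coverings and semimodularity surviving unions, identification of the join-irreducibles of the union) are correct and can be completed along those lines, so there is no gap, only deferred work comparable in size to Claim~\ref{claim:zorn}. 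Two cosmetic remarks: what shrinks to $\set{c_1,\dots,c_{m-1}}$ is the \emph{strict} down-set of $e$, and your reading of \eqref{pbxDfRrrh} is the intended one recorded in \eqref{eq:smmHrGsGnrsz} (the roles of $x$ and $y$ in the second line of \eqref{pbxDfRrrh} are evidently transposed).
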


In other words, if $L$ is a semimodular lattice of finite length and  we partition $\Jir L$ into chains, then for a length-preserving semimodular extension $K$ of $L$, we can partition $\Jir K$ into pairwise parallel (new) chains that are of the same sizes as the original chains.

\begin{remark} Later, Lemma~\ref{lemma:ngyhjnlvvgn} will  allow us to say 
``pairwise  \emph{lattice-theoretically disjoint} chains of $\Jir K$'' instead of ``pairwise \emph{parallel} chains of $\Jir K$'' in 
 Theorem~\ref{thm:infrect}.
\end{remark}

For later reference, note the following.

\begin{remark}\label{rem:kthvtkhsnrfdn}
Let $\Gamma$ be a property of \eqref{txt:convSMLFL} lattices preserved by taking  directed unions and  lowering any join-irreducible element. If $\Gamma$ holds in $L$, then 
the proof below automatically shows that $\Gamma$  also holds in $K$. If $L$ is finite, then directed unions do not occur and it suffices that the lowering construction preserves $\Gamma$.
\end{remark}

\begin{proof}[Proof of Theorem~\ref{thm:infrect}]
Let $L$ be a \eqref{txt:convSMLFL}-lattice, and denote the set (not the poset) $\Jir L$ by $J$. We denote by $\wk\rho =\set{(u,v)\in J\times J: u\leq_L v}$ the ordering of $\Jir L$, that is, $\Jir L=(J,\wk\rho)=P$.  For $i\in I$, let $\rho_i$ denote the ordering of $C_i$, and let $\rho_\ast:=\bigcup\set{\rho_i:i\in I}$. Since  $\set{C_i:i\in I}$ is a partition of $P$, the relation $\rho_\ast$ is an ordering of $P$ and $\rho_\ast \subseteq \wk\rho$. Note that if $\rho_\ast=\wk\rho$, then the chains are pairwise parallel and we are ready by taking $K:=L$ and $C_i':=C_i$, for $i\in I$. Let $\wha F$ be the geometry $\Geom L=((J,\wk\rho),\alg F)$; see Proposition~\ref{prop:sDlspFl}. 
At this point, interrupting the proof of Theorem~\ref{thm:infrect}, we formulate the following auxiliary statement.

\begin{claim}\label{claim:hncSd} If $\rho_\ast\subset\wk\rho$, then there is a geometry $\wha G=((J,\tau),\alg G)$ such that $\rho_\ast\subseteq\tau \subset \wk\rho$ and $\alg G\supset\alg F$. 
\end{claim}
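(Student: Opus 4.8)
The plan is to prove the Claim by a single application of the lowering construction of Definition~\ref{def:Lext} and Theorem~\ref{thmmain}. Concretely, I will exhibit a join-irreducible $e\in J$ that is not an atom together with an element $h\in L$ satisfying $h<\lcov L e$, lower $e$ to a cover of $h$, and read off the resulting geometry $\wha G=((J,\tau),\alg G)$ directly from the proof of Theorem~\ref{thmmain}: there $\tau$ is the order $\le_R$ of \eqref{pbxDfRrrh} and $\alg G=\alg F\cup\alg N$. The element $h$ will always be taken to be the predecessor of $e$ inside its \emph{own} chain (and $h:=0$ when $e$ is the bottom of its chain); this choice is what guarantees that no comparability of $\rho_\ast$ is destroyed. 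The only genuine work is to choose the ``top'' $e$ so that there is still room to lower it, i.e.\ so that $h<\lcov L e$ holds \emph{strictly}.

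For the choice of $e$, let $B$ be the set of those $v\in J$ that occur as the larger element of some $\wk\rho$-comparability lying outside $\rho_\ast$; that is, $v\in B$ iff there is a $w\in J$ with $w<_L v$ and $(w,v)\notin\rho_\ast$. The hypothesis $\rho_\ast\subset\wk\rho$ says precisely that $B\neq\emptyset$, so, since $(J,\wk\rho)=P$ is of finite length, I can pick a $\wk\rho$-minimal element $e$ of $B$. Such an $e$ is not an atom, since $e\in B$ forces $\sodeal P e\neq\emptyset$. Let $C$ be the chain containing $e$, let $e^-$ be the largest element of $C$ below $e$ (if one exists), and put $h:=e^-$, or $h:=0$ when $e$ is the bottom of $C$. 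In both cases $h\in L$, $h\le_L\lcov L e$, and every $\rho_\ast$-predecessor of $e$ is $\le_L h$.

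The key point is that $h<_L\lcov L e$. This is clear when $e$ is the bottom of $C$, since then $h=0<\lcov L e$ because $e$ is not an atom. Otherwise, suppose for contradiction that $h=e^-=\lcov L e$. Then $\sodeal P e=P\cap\sideal L{\lcov L e}=P\cap\sideal{L}{e^-}$, so any witness $w$ of $e\in B$ satisfies $w\le_L e^-$; as $w=e^-$ would give $(w,e)\in\rho_\ast$, we must have $w<_L e^-$, and $(w,e^-)\notin\rho_\ast$ (else $w\in C$ and $(w,e)\in\rho_\ast$). Hence $e^-\in B$ with $e^-<_{\wk\rho}e$, contradicting the minimality of $e$. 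Thus $h<_L\lcov L e$, and Theorem~\ref{thmmain} applies to the triple $(L,e,h)$, producing the \eqref{pbx:RdGvBlJdX}-geometry $\wha G=((J,\tau),\alg G)$ with $H:=P\cap\sideal L h$, $\tau$ equal to the order $\le_R$ of \eqref{pbxDfRrrh}, and $\alg G=\alg F\cup\alg N$.

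It remains to verify the three assertions. The strict inclusion $\alg G\supset\alg F$ is immediate, since the proof of Theorem~\ref{thmmain} shows that $\alg N$ is nonempty (it contains $\plue H$) and disjoint from $\alg F$. Next, $\tau\subseteq\wk\rho$ is the final clause of \eqref{eq:smmHrGsGnrsz}, and the inclusion is proper because $\sodeal R e=H\subset\sodeal P e$ by \eqref{eq:smmHrGsGnrsz}, so some pair $w<_{\wk\rho}e$ fails $w<_\tau e$. Finally, for $\rho_\ast\subseteq\tau$, take $(a,b)\in\rho_\ast$, so $a<b$ in a common chain. If $b\neq e$, then $\sodeal R b=\sodeal P b\ni a$ by \eqref{eq:smmHrGsGnrsz}, whence $a<_\tau b$; and if $b=e$, then $a$ is a $\rho_\ast$-predecessor of $e$, so $a\le_L e^-\le_L h$, i.e.\ $a\in P\cap\sideal L h=H=\sodeal R e$, again giving $a<_\tau e$. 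Thus $\rho_\ast\subseteq\tau\subset\wk\rho$, completing the Claim. The heart of the argument—and the only delicate point—is the existence step: ruling out a chain-top $e$ whose chain-predecessor already equals its lower cover. The minimality of $e$ in $B$ is exactly what excludes this, by forcing any offending cross-chain comparability one step further down the chain; everything else is bookkeeping with \eqref{pbxDfRrrh} and the flat description $\alg G=\alg F\cup\alg N$ supplied by Theorem~\ref{thmmain}.
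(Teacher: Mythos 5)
Your proposal is correct and takes essentially the same route as the paper: both arguments choose a minimal ``violating'' join-irreducible $e$, take $h$ to be the predecessor of $e$ in its own chain (or $0$ when $e$ is the bottom of its chain), prove the key inequality $h<\lcov L e$, and then invoke Theorem~\ref{thmmain} and \eqref{eq:smmHrGsGnrsz} to obtain $\wha G=((J,\tau),\alg G)$ with $\rho_\ast\subseteq\tau\subset\wk\rho$ and $\alg G\supset\alg F$. The only difference is in the bookkeeping of minimality: the paper fixes a cross-chain witness $b\in C_j$ and takes $e$ to be the least element of $C_i\cap\sfilter L b$, getting $h<\lcov L e$ from $b\leq h\svee L b\leq\lcov L e$ and $\lcov L e\notin C_i$, whereas you take $e$ to be $\wk\rho$-minimal among upper ends of all violating pairs and rule out $h=\lcov L e$ by pushing a violation down to $e^-$ --- an equally short argument, and your more explicit verification of $\rho_\ast\subseteq\tau$ is just a spelled-out version of what the paper leaves to \eqref{eq:smmHrGsGnrsz}.
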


\begin{proof}[Proof of Claim~\ref{claim:hncSd}]
Since $\rho_\ast\subset\wk\rho$ and $\Jir L =(J,\wk\rho)$ is of finite length, we can pick $i,j\in I$,
$e\in C_i$, and $b\in C_j$ such that $i\neq j$, $b<e$,
and $e$ is the least element of $C_i\cap\sfilter L b$. 
Denote $C_i\cup\set{0_L}$ by $C_i^{+0}$. Let $h$ be the largest element of $\sodeal{C_i^{+0}}e=\set{x\in C_i^{+0}: x<e}$; in notation, $h\prec_{C_i^{+0}}e$. We claim that $h<\lcov L e$. Since $e$ is join-irreducible, $b<e$ gives that $b\leq \lcov L e$. So if $h=0$, then the required $h<\lcov L e$ follows from $0<b\leq \lcov L e$. Assume that $h\neq 0$. From $h<e$ and $b<e$, we obtain that $b\leq h\svee L b\leq \lcov L e$. Since $e$ is the least element of  $C_i\cap\sfilter L b$, it follows that $\lcov L e\notin C_i$. But $h\in C_i$ and $h\leq \lcov L e$, whence $h<\lcov L e$. That is, in both possible cases, $h<\lcov L e$. 
Therefore, Proposition~\ref{prop:sDlspFl} and Theorem~\ref{thmmain} allow us to take the geometry $\wha G=((J,\tau),\alg G)$ that corresponds to  $K:=\Lext$.  (If we look into the proof of 
Theorem~\ref{thmmain}, then Proposition~\ref{prop:sDlspFl} is not needed since $K$ is constructed from $\wha G$.) By construction, $\alg G\supset\alg F$. It follows from \eqref{eq:smmHrGsGnrsz} that $\rho_\ast\subseteq \tau\subset\wk\rho$, completing the proof of Claim~\ref{claim:hncSd}.
\end{proof}

Next, resuming the proof of Theorem~\ref{thm:infrect},
define $T$ as the set of all geometries $\wha G=((J,\tau),\alg G)$ such that $\alg F$ is a length-preserving subset of $\alg G$  and $\rho_\ast\subseteq\tau\subseteq \wk\rho$. For members $\wha G=((J,\tau),\alg G)$  and $\wha{G'}=((J,\tau'),\alg G')$ of $T$, we define $\wha G\leq \wha{G'}$ as the conjunction of 
$\alg G\subseteq \alg G'$ and $\tau\supseteq \tau'$. This definition turns $T$ into a poset $(T,\leq)$. Interrupting the proof again, we formulate the following statement.

\begin{claim}\label{claim:zorn} Each chain in $(T,\leq)$ has an upper bound in $(T,\leq)$. 
\end{claim}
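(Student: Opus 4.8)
The plan is to verify the standard Zorn's-lemma hypothesis directly: take an arbitrary nonempty chain $\set{\wha G_s = ((J,\tau_s),\alg G_s): s\in S}$ in $(T,\leq)$ and exhibit an explicit upper bound by forming the union of the flat-systems and the intersection of the orderings. Concretely, I would set $\alg G_\infty := \bigcup\set{\alg G_s: s\in S}$ and $\tau_\infty := \bigcap\set{\tau_s: s\in S}$, and I would propose $\wha G_\infty := ((J,\tau_\infty),\alg G_\infty)$ as the upper bound. The definition of $\leq$ on $T$ makes this the obvious candidate, since $\wha G_s\leq \wha G_\infty$ requires $\alg G_s\subseteq\alg G_\infty$ (automatic from the union) and $\tau_s\supseteq\tau_\infty$ (automatic from the intersection).

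The substance is then to check that $\wha G_\infty$ is actually a member of $T$, which breaks into two parts. First I would confirm that $\wha G_\infty$ is a \eqref{pbx:RdGvBlJdX}-geometry, i.e.\ that it satisfies (FL), (F$\cap$), (F$\dwn$), (Pr), and (CP). Here I would use that $\set{\wha G_s}$ is a chain: for $s,t\in S$, either $\alg G_s\subseteq\alg G_t$ (and $\tau_s\supseteq\tau_t$) or vice versa, so the union $\alg G_\infty$ is a \emph{directed} union of the $\alg G_s$ and, correspondingly, $\tau_\infty$ is a directed (decreasing) intersection of the $\tau_s$. Since every $\wha G_s$ has flats that are down-sets of $(J,\tau_s)$ and each $\tau_s\supseteq\tau_\infty$, each such flat is also a down-set of the coarser order $(J,\tau_\infty)$, giving (F$\dwn$). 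For (F$\cap$) and the principal flats in (Pr), I would argue that for any $u\in J$, the sets $\sideal{(J,\tau_s)}u$ and $\sodeal{(J,\tau_s)}u$ eventually stabilize along the chain because $\length{(J,\wk\rho)}$ is finite, so there are only finitely many distinct orderings $\tau_s$ possible between $\rho_\ast$ and $\wk\rho$; this finiteness is the clean way to secure (F$\cap$), (Pr), and (CP) for $\wha G_\infty$. Second I would check that $\alg F$ is a length-preserving subset of $\alg G_\infty$ and that $\rho_\ast\subseteq\tau_\infty\subseteq\wk\rho$: the ordering containments are immediate from the intersection, and length-preservation transfers from any single $\wha G_s$ since each maximal chain of $\alg F$ that is maximal in $\alg G_s$ remains maximal in the larger $\alg G_\infty$ by the same finite-length bound.

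The step I expect to be the main obstacle is establishing that $\wha G_\infty$ genuinely satisfies all five geometry axioms, and in particular (CP) and (FL), since these are the axioms that could fail when one passes to an infinite union of flat-systems. The crucial observation that disarms this difficulty is that, because $(J,\wk\rho)=\Jir L$ has finite length, the poset of orderings $\tau$ with $\rho_\ast\subseteq\tau\subseteq\wk\rho$ is finite, so the chain $\set{\tau_s}$ takes only finitely many values; hence $\tau_\infty=\tau_{s_0}$ for some $s_0\in S$ and the descending chain of orderings actually terminates. Once the order is fixed at $\tau_{s_0}$, the remaining work is just to verify that the directed union $\alg G_\infty$ of closure systems (all on the fixed poset $(J,\tau_{s_0})$) is again a closure system of finite length satisfying (Pr) and (CP), which follows from the directedness of the union together with the finite-length axiom (FL) holding uniformly. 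Therefore $\wha G_\infty\in T$ is an upper bound of the chain, and by Zorn's lemma $(T,\leq)$ has a maximal element, which is what drives the subsequent argument.
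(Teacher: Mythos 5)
Your overall plan---take the union of the flat-systems and the intersection of the orderings, then verify membership in $T$---is exactly the paper's strategy, and your treatment of (F$\dwn$) (flats that are down-sets for the larger orders $\tau_s$ remain down-sets for the smaller order $\tau_\infty$) is fine. But the step you call the ``crucial observation'' is false, and it is the load-bearing step of your argument. You claim that, since $(J,\wk\rho)=\Jir L$ has finite length, there are only finitely many orderings $\tau$ with $\rho_\ast\subseteq\tau\subseteq\wk\rho$, so that $\tau_\infty=\tau_{s_0}$ for some $s_0$. Finite length bounds the size of \emph{chains} in $J$, not the cardinality of $J$: a \eqref{txt:convSMLFL}-lattice can have infinitely many join-irreducible elements (infinite width), and then $\wk\rho\setminus\rho_\ast$ can be infinite and there are infinitely many intermediate orderings; a chain in $T$ can have $\tau_s$ strictly decreasing infinitely often (this is precisely what happens when one iterates the lowering construction through infinitely many join-irreducible elements, which is the intended use of the claim). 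Note that the infinite case is the \emph{only} case that matters here: as the paper remarks, for finite $L$ Claim~\ref{claim:zorn} and Zorn's lemma are not needed at all. So an argument that in effect assumes the orderings stabilize proves nothing beyond the finite case.

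Once the stabilization claim is removed, your verifications of (F$\cap$), (Pr), and (CP) have no support, and these need genuinely different arguments, which is what the paper supplies. First one proves (FL) directly from directedness of the union (any finite chain in $\alg G_\infty$ lies in a single $\alg G_s$, so its size is bounded by $1+\length{\alg F}$); from (FL) one gets the finite-intersection property \eqref{pbx:ktrnspchbwlpWk}, which together with closure of the directed union under binary intersections yields (F$\cap$); then (Pr) and (F$\dwn$) follow from \eqref{eq:vdlpzsmdksdmgglx} and (F$\cap$), since each $\sideal{(J,\tau_s)}u$ lies in $\alg G_\infty$ and the possibly infinite intersections there reduce to finite ones. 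The delicate point you do not address at all is (CP): applying (CP) inside a single well-chosen $\wha G_{i'}$ (an upper bound of a \emph{finite} subfamily chosen via \eqref{pbx:ktrnspchbwlpWk} so that $\sodeal{(J,\tau_\infty)}q=\sodeal{(J,\tau_{i'})}q$) produces a cover $Y$ of $X$ in $\alg G_{i'}$, but one must still rule out a flat $Z\in\alg G_\infty$ with $X\subset Z\subset Y$; the paper does this by embedding $\alg G_{i'}$ and $Z$ into a common $\alg G_{i''}$ and invoking Wild's result \eqref{pbx:wvdcnvbrdfnkkg} that a meet-subsemilattice of the same finite length is a sublattice, so that covers persist. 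Without something of this kind, ``covers survive passage to the union'' is unjustified, and that is where the real content of the claim lies.
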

 
\begin{proof}[Proof of Claim~\ref{claim:zorn}]
Some trivial details like ``$P\in$'' or ''$\emptyset\in$'' will be omitted. Let $\set{\wha G_i:i\in I}$ be a chain in $T$ where $\wha G_i=((J,\tau_i),\alg G_i)$. We define $\alg G:=\bigcup\set{\alg G_i:i\in I}$ and
$\tau:=\bigcap\set{\tau_i:i\in I}$. We only need to show that 
$\wha G=((J,\tau),\alg G)$ belongs to $T$;  then $\wha G$ is clearly an upper bound of the chain $\set{\wha G_i:i\in I}$.
Clearly,  $\rho_\ast\subseteq\tau\subseteq \wk\rho$, $(J,\tau)$ is a poset, and this poset is of finite length since so is $(J,\wk\rho)$. A \emph{finite} chain in $\alg G$ with more than $1+\length{\alg F}$ elements (which are subsets of $J$) would be a chain in $\alg G_i$ for some $i\in I$ since the union defining $\alg G$ is directed, and this would contradict  $\length{\alg G_i}= \length{\alg F}$. Thus, (FL) holds for $\wha G$. This implies that 
\begin{equation}
\parbox{8cm}{any intersection of members of  $\alg G$ is actually an intersection of finitely many of the given members.}
\label{pbx:ktrnspchbwlpWk}
\end{equation}
Since $\alg G$, as a directed union, is closed with respect to
the binary intersection, \eqref{pbx:ktrnspchbwlpWk} implies that $\wha G$ satisfies (F$\cap$). 
Using  the first of the obvious equalities
%Combining \eqref{pbx:ktrnspchbwlpWk} with the first of the obvious equalities
\begin{equation}
\sideal{(J,\tau)}u=\bigcap\set{\sideal{(J,\tau_i)}u: i\in I}\,\,
\text{ and }\,\, \sodeal{(J,\tau)}u=\bigcap\set{\sodeal{(J,\tau_i)}u: i\in I}
\label{eq:vdlpzsmdksdmgglx}
\end{equation}
together with
$\sideal{(J,\tau_i)}u\in \alg G_i\subseteq \alg G$
(coming from the validity of (Pr) for $\wha G_i$)
 and (F$\cap$) for $\wha G$,
 we conclude that (F$\dwn$) holds for $\wha G$. 
Similarly, so does (Pr) by \eqref{eq:vdlpzsmdksdmgglx}, $\set{\sideal{(J,\tau_i)}u,\,\,\sodeal{(J,\tau_i)}u}\subseteq \alg G_i\subseteq \alg G$ and (F$\cap$).

Next, to deal with (CP), assume that $q\in J$, $X\in \alg G$, $q\notin X$, and $\sodeal{(J,\tau)}q\subseteq X$. 
Pick an $i_0\in I$ such that $X\in \alg G_{i_0}$. By 
 \eqref{pbx:ktrnspchbwlpWk} \eqref{eq:vdlpzsmdksdmgglx}, and $\sodeal{(J,\tau_i)}u\in \alg G_i\subseteq \alg G$,
$\set{i_0}$ can be extended to a finite subset $I'$ of $I$ such that  $\sodeal{(J,\tau)}u=\bigcap\set{\sodeal{(J,\tau_i)}u: i\in I'}$. Pick an $i'\in I$ such that $\wha G_{i'}\geq \wha G_i$ for all $i\in I'$. Then $\tau_{i'}\subseteq \tau_i$ and $\sodeal{(J,\tau_i)}u \supseteq  \sodeal{(J,\tau_{i'})}u$
 for all $i\in I'$. Combining this with the previous equality, $\sodeal{(J,\tau)}u\supseteq \sodeal{(J,\tau_{i'})}u$. Since \eqref{eq:vdlpzsmdksdmgglx} gives the converse inclusion, we have that $\sodeal{(J,\tau)}u = \sodeal{(J,\tau_{i'})}u\in \alg G_{i'}$. Thus, since $X\in\alg G_{i_0}\subseteq \alg G_{i'}$ and $u\notin X$, (CP) applied to $\alg G_{i'}$ yields a $Y\in\alg G_{i'}$ such that $q\in Y$ and $Y$ covers $X$ in $\alg G_{i'}$. 
If we had a $Z\in\alg G$ with $X\subset Z\subset Y$, then we could pick an $i''\in I$ with $\set{Z}\cup\alg G_i\subseteq \alg{G_{i''}}$ and then $\length{\alg G_{i'}} < \length{\alg G_{i''}}$ would contradict the fact that  $\alg G_{i'}$ is a sublattice of  $\alg G_{i''}$ by \eqref{pbx:wvdcnvbrdfnkkg} and both $\alg G_{i'}$ and $\alg G_{i''}$ are semimodular lattices of the same length.
Therefore, $\wha G$ satisfies (CP), completing the proof of 
Claim~\ref{claim:zorn}.
\end{proof}

Resuming the proof of Theorem~\ref{thm:infrect}, Zorn's lemma and Claim~\ref{claim:zorn} allow us to take a maximal element  $\wha G=((J,\tau),\alg G)$ of the poset $T$.  It follows from Claim~\ref{claim:hncSd} that $\tau=\rho_\ast$. Therefore, 
$K:=\Lat{\wha G}$ satisfies the requirements of Theorem~\ref{thm:infrect}, completing its proof.
\end{proof}

\section{Some corollaries}\label{sect:corol}
Before deriving any corollaries, note the following, which should be taken into account when we mention short  proofs.
\begin{remark} For the particular case when $L$ is finite, the proof of  Theorem~\ref{thm:infrect} could be much shorter since Claim~\ref{claim:zorn} and Zorn's lemma would not be necessary. In fact, for a finite $L$, the proof of Theorem~\ref{thm:infrect} reduces to that of Claim~\ref{claim:hncSd}, which we apply in a finite number of steps.
\end{remark}

Theorems~\ref{thmmain} and \ref{thm:infrect}{} imply some earlier results and offer, with at most one exception, easier (usually much easier) proofs of these results than the original ones.

\begin{corollary}[{Gr\"atzer and Kiss~\cite[Lemma 17]{ggkiss}}]\label{cor:ggkiss}
Each finite semimodular lattice has a length-preserving extension into a geometric lattice.
\end{corollary}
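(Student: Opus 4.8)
The plan is to derive Corollary~\ref{cor:ggkiss} directly from Theorem~\ref{thm:infrect} by a suitable choice of the chain partition of $\Jir L$. Given a finite semimodular lattice $L$, I would partition $\Jir L$ into singletons, that is, take $\set{C_i : i\in I}$ where each $C_i$ is a one-element chain consisting of a single join-irreducible element. These singleton chains are trivially pairwise set-theoretically disjoint, so the hypothesis of Theorem~\ref{thm:infrect} is satisfied.

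Applying Theorem~\ref{thm:infrect} yields a length-preserving semimodular extension $K$ of $L$ together with pairwise \emph{parallel} chains $\set{C_i' : i\in I}$ with $\Jir K=\bigcup\set{C_i': i\in I}$ and $|C_i'|=|C_i|=1$ for all $i\in I$. Since each $C_i'$ has exactly one element and distinct parallel chains are disjoint, the elements of $\Jir K$ are pairwise incomparable; in other words, $\Jir K$ is an antichain. The key step is then to observe that in a \eqref{txt:convSMLFL}-lattice whose poset of join-irreducible elements is an antichain, every join-irreducible element must be an atom: if $p\in\Jir K$ had its unique lower cover $\lcov K p$ above $0$, then $\lcov K p$ would be a join of join-irreducible elements strictly below $p$, forcing some $q\in\Jir K$ with $q<p$, contradicting that $\Jir K$ is an antichain. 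Hence $\Jir K=\At K$, which is precisely the definition of a geometric lattice given in Subsection~\ref{subsect:scglw}.

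Since $K$ is semimodular, of finite length (indeed $\length K=\length L$), and satisfies $\Jir K=\At K$, it is a geometric lattice, and $L$ embeds into $K$ as a sublattice in a length-preserving (hence cover-preserving $\set{0,1}$-) manner. This establishes the corollary. I do not anticipate a genuine obstacle here: the entire content has been pushed into Theorem~\ref{thm:infrect}, and the only thing to verify is the elementary implication ``$\Jir K$ an antichain $\Rightarrow$ $K$ geometric,'' which follows immediately from the fact that every element of a finite lattice is the join of the join-irreducible elements below it together with the characterization of join-irreducibles as elements with a unique lower cover. The mild point requiring a word of care is confirming that the length-preserving extension produced is genuinely an embedding with $L$ a sublattice and the same $0$ and $1$, but this is guaranteed by Definition~\ref{def:lower}\eqref{def:lowera} and is preserved through the construction in Theorem~\ref{thm:infrect}.
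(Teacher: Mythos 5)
Your proposal is correct and coincides with the paper's own proof, which handles Corollary~\ref{cor:ggkiss} precisely by applying Theorem~\ref{thm:infrect} to the singleton partition $\set{\set u:u\in\Jir L}$ (the paper also notes the alternative of applying Theorem~\ref{thmmain} repeatedly). You have merely spelled out the step the paper leaves implicit, namely that pairwise parallel singleton chains make $\Jir K$ an antichain and hence force $\Jir K=\At K$, and that elementary argument is sound.
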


The proof of this corollary (together with that of Theorem~\ref{thmmain})  is much shorter than the proof in Gr\"atzer and Kiss~\cite{ggkiss}. The proof of the following corollary is competitive with that given in Wild~\cite{wild}.

\begin{corollary}[{Wild~\cite[Theorem 4]{wild}}]\label{cor:wild}
Each finite semimodular lattice $L$ has a length-preserving extension into a geometric lattice $K$ such that $|\At K|=|\Jir L|$.
\end{corollary}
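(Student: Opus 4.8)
The plan is to invoke Theorem~\ref{thm:infrect} with the finest possible partition of $\Jir L$ into chains, namely the partition into singletons. Writing $\Jir L=\set{p_i:i\in I}$ with $I$ the index set, each $C_i:=\set{p_i}$ is a one-element chain, and these chains are pairwise set-theoretically disjoint with $|I|=|\Jir L|$. Theorem~\ref{thm:infrect} then yields a length-preserving semimodular extension $K$ of $L$ together with pairwise \emph{parallel} chains $\set{C_i':i\in I}$ of $\Jir K$ such that $\Jir K=\bigcup\set{C_i':i\in I}$ and $|C_i'|=|C_i|=1$ for every $i$.

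Since each $C_i'$ is a singleton and distinct $C_i'$ are parallel, hence set-theoretically disjoint and mutually incomparable, the poset $\Jir K$ is an \emph{antichain} with exactly $|I|=|\Jir L|$ elements. The key step is then to observe that a \eqref{txt:convSMLFL}-lattice whose poset of join-irreducible elements is an antichain is automatically geometric. Indeed, take any $p\in\Jir K$; if $p$ were not an atom, then its unique lower cover $\lcov K p$ would be nonzero, and since in a lattice of finite length every nonzero element lies above some join-irreducible, we could pick $q\in\Jir K$ with $q\leq\lcov K p<p$, giving $q<p$ and contradicting that $\Jir K$ is an antichain. Hence every join-irreducible of $K$ is an atom, that is, $\Jir K=\At K$, which by definition means that $K$ is geometric.

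Combining these facts completes the argument: $K$ is a length-preserving geometric extension of $L$, and $|\At K|=|\Jir K|=|I|=|\Jir L|$, as required. I expect no real obstacle beyond spelling out the elementary observation that an antichain of join-irreducibles forces every join-irreducible to be an atom; the substantive work has already been carried out in Theorem~\ref{thm:infrect}. For the finite $L$ of this corollary, that theorem in turn reduces to finitely many applications of the $\Lext$ construction of Theorem~\ref{thmmain}, so the whole proof is short.
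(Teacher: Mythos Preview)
Your proof is correct and follows precisely the approach the paper indicates: apply Theorem~\ref{thm:infrect} to the singleton partition $\set{\set u : u\in\Jir L}$ (equivalently, apply Theorem~\ref{thmmain} repeatedly). The paper's own proof is a single sentence; you have simply spelled out the details it leaves to the reader, namely that pairwise parallel singleton chains force $\Jir K$ to be an antichain and hence $\Jir K=\At K$.
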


\begin{proof}[Proof of Corollaries \ref{cor:ggkiss} and \ref{cor:wild}] Apply Theorem~\ref{thmmain}  repeatedly. 
Alternatively, apply Theorem~\ref{thm:infrect} for the 
 partition $\set{\set u:u\in\Jir L}$.
\end{proof}

The proof of the following corollary is also much simpler than the original one.

\begin{corollary}[{Cz\'edli and Schmidt~\cite[Theorem 1]{czgscht2geom}}]\label{cor:czsch}
Each  semimodular lattice $L$ of finite length has a length-preserving extension into a geometric lattice $K$ such that $|\At K|=|\Jir L|$.
\end{corollary}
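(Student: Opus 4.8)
The plan is to reduce Corollary~\ref{cor:czsch} to a single application of Theorem~\ref{thm:infrect}, exactly as in the proof of Corollaries~\ref{cor:ggkiss} and \ref{cor:wild}; the only novelty is that here $L$ is merely of finite length rather than finite, but Theorem~\ref{thm:infrect} already covers this generality. First I would partition $\Jir L$ into singletons: set $C_u:=\set u$ for each $u\in\Jir L$, so that $\set{C_u:u\in\Jir L}$ is a family of pairwise set-theoretically disjoint (one-element) chains whose union is $\Jir L$. Applying Theorem~\ref{thm:infrect} to this partition yields a length-preserving semimodular extension $K$ of $L$ together with pairwise \emph{parallel} chains $\set{C_u':u\in\Jir L}$ such that $\Jir K=\bigcup\set{C_u':u\in\Jir L}$ and $|C_u'|=|C_u|=1$ for every $u$. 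Since the $C_u'$ are singletons lying in pairwise parallel chains, any two distinct elements of $\Jir K$ are incomparable; that is, the poset $\Jir K$ is an \emph{antichain}.

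The one step requiring a short argument is the observation that an antichain poset of join-irreducibles forces $K$ to be geometric. I would argue as follows. Fix $p\in\Jir K$ and consider its unique lower cover $\lcov K p$. Since $K$ is of finite length, every element of $K$, in particular $\lcov K p$, is the join of the join-irreducible elements below it. If $\lcov K p$ were nonzero, there would be some $q\in\Jir K$ with $q\leq\lcov K p<p$, contradicting the fact that $\Jir K$ is an antichain. Hence $\lcov K p=0$, so $p$ is an atom of $K$. This shows $\Jir K\subseteq\At K$; as the reverse inclusion $\At K\subseteq\Jir K$ holds in any lattice, we get $\Jir K=\At K$, which by definition means that $K$ is a geometric lattice.

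Finally, I would read off the cardinality statement. The singletons $C_u'$ are pairwise disjoint and indexed by $u\in\Jir L$, so $|\At K|=|\Jir K|=|\Jir L|$, while $\length K=\length L$ and the sublattice property are supplied directly by Theorem~\ref{thm:infrect}. Thus all the requirements of Corollary~\ref{cor:czsch} hold. There is no genuine obstacle at this stage: all the difficulty lives in Theorem~\ref{thm:infrect} (and, through it, in the lowering construction of Theorem~\ref{thmmain}); once that machinery is in place, the present corollary is immediate apart from the elementary antichain-implies-geometric remark above.
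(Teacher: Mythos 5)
Your proof is correct and takes exactly the paper's route: the paper derives Corollary~\ref{cor:czsch} in one line by applying Theorem~\ref{thm:infrect} to the singleton partition $\set{\set u:u\in\Jir L}$, leaving implicit the observations you spell out (that parallel singleton chains make $\Jir K$ an antichain, that an antichain of join-irreducibles in a lattice of finite length forces $\Jir K=\At K$, and the resulting cardinality count). Your write-up simply makes these routine details explicit.
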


Note that Skublics~\cite[Corollary 2]{skublics} extended this result  to some semimodular lattices that are not of finite heights, but his result is beyond the scope of the present paper.

\begin{proof}[Proof of Corollary~\ref{cor:czsch}] 
 Apply Theorem~\ref{thm:infrect} for the 
 partition $\set{\set u:u\in\Jir L}$.
\end{proof}

A poset is of \emph{width $k$} if it is the union of $k$ chains but not of fewer chains. For a \eqref{txt:convSMLFL}-lattice $L$ and chains $C_1,C_2\subseteq \Jir L$, we say that $C_1$ and $C_2$ are \emph{lattice-theoretically disjoint} if $x_1\swedge L x_2=0_L$ for all $x_1\in C_1$ and $x_2\in C_2$. Since $0_L\notin \Jir L$, $C_1$ and $C_2$ are parallel in this case. 
Since two parallel chains in $\Jir L$ need not be lattice-theoretically disjoint in $L$, it is worth formulating the following lemma.

\begin{lemma}\label{lemma:ngyhjnlvvgn} Let $L$ be a \eqref{txt:convSMLFL} lattice, and let $\set{C_i: i\in I}$ be a partition of $\Jir L$ into chains.  Then $\set{C_i: i\in I}$ is a set of pairwise parallel chains if and only if it is a set of pairwise lattice-theoretically disjoint chains.
\end{lemma}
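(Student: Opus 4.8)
The plan is to prove the two implications separately, with the forward direction being essentially immediate and the converse carrying the real content. The forward implication is already recorded in the paragraph preceding the lemma: if $C_i$ and $C_j$ are lattice-theoretically disjoint with $i\neq j$, then for every $x_1\in C_i$ and $x_2\in C_j$ we have $x_1\swedge L x_2=0_L$; since $x_1,x_2\in\Jir L$ are nonzero, neither can lie below the other (otherwise the meet would equal one of them and hence be nonzero), so $x_1\parallel x_2$. Thus lattice-theoretic disjointness implies parallelism, and I only need to argue the converse.

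For the converse, I would assume that $\set{C_i:i\in I}$ is a family of pairwise parallel chains and suppose toward a contradiction that it is \emph{not} pairwise lattice-theoretically disjoint. Then there are indices $i\neq j$ together with $x_1\in C_i$ and $x_2\in C_j$ for which $m:=x_1\swedge L x_2\neq 0_L$. Since $L$ is of finite length, every nonzero element lies above some atom, and every atom is join-irreducible; hence I can choose a $p\in\Jir L$ with $p\leq m$, so that $p\leq x_1$ and $p\leq x_2$ simultaneously.

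The decisive step uses that $\set{C_i:i\in I}$ is a \emph{partition} of $\Jir L$, so $p$ belongs to exactly one block, say $p\in C_k$. Because $i\neq j$, the index $k$ cannot equal both of them; say $k\neq i$ (the case $k\neq j$ is symmetric, using $p\leq x_2$ in place of $p\leq x_1$). Then $p\in C_k$ and $x_1\in C_i$ with $k\neq i$, yet $p\leq x_1$, contradicting the parallelism of $C_k$ and $C_i$. This contradiction forces $m=0_L$, which establishes pairwise lattice-theoretic disjointness and completes the proof.

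I do not expect a genuine obstacle here: the entire argument rests on the single observation that a nonzero meet of two join-irreducibles must have a common join-irreducible lower bound, and in a partition that lower bound is forced to be comparable to an element of a different block, which is exactly what parallelism forbids. The only point deserving a word of care is the existence of $p\leq m$, which follows from finite length (or, equivalently, from the fact that every nonzero element of an $\eqref{txt:convSMLFL}$-lattice is the join of the nonempty set of join-irreducibles below it).
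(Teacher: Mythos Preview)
Your proof is correct and follows essentially the same approach as the paper: both reduce to picking a join-irreducible element below a nonzero meet $x_1\swedge L x_2$ and using the partition hypothesis to locate it in a block that must then fail to be parallel to another block. The only cosmetic difference is that the paper phrases the contradiction by forcing $k=i$ and $k=j$ simultaneously, whereas you observe directly that $k$ must differ from at least one of $i,j$; the substance is identical.
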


\begin{proof} Since two lattice-theoretically disjoint chains in $L\setminus\set{0_L}$ are trivially parallel, it suffices to prove the ``only if'' part. For the sake of contradiction, suppose that $\set{C_i: i\in I}$  consists of pairwise parallel chains but, for some $i',i''\in I$, $i'\neq i''$ and $C_{i'}$ is not disjoint lattice-theoretically from $C_{i''}$. 
Take an  $x'\in C_{i'}$ and an $x''\in C_{i''}$ such that $x'\swedge L x''\not= 0_L$. Then we can pick a $y\in\Jir L$ such that $y\leq_L x'$ and $y\leq_L x''$. There is a unique $j\in I$ such that $y\in C_j$. Since $C_{i'}$ and $C_j$ are parallel chains, $j=i'$. But  $C_{i''}$ and $C_j$ are parallel chains, too, whereby $j=i''$. Hence, $i'=i''$, which is a contradiction.
\end{proof}

\begin{definition}[E.\ T.\ Schmidt, unpublished]\label{def:kdimrect} 
A \emph{$k$-dimensional rectangular lattice} is a finite semimodular lattice $L$ such that $k=\width{\Jir L}$ and $\Jir L$ is the union of $k$ pairwise lattice-theoretically disjoint chains. 
\end{definition}

\begin{corollary}[E.\ T. Schmidt, unpublished]\label{corol:sch}
Let $L$ be a finite semimodular lattice, and let $k:=\width{\Jir L}$. Then $L$ has a  length-preserving extension into a $k$-dimensional rectangular lattice $K$ such that $|\Jir K|=|\Jir L|$.  Furthermore, if  
$\set{C_1,\dots,C_n}$ is a partition of $\Jir L$ into chains, then we can choose $K$ so that $\Jir K=C_1'\cup\dots\cup C_n'$ with pairwise lattice-theoretically disjoint chains $C_1',\dots,C_n'$ satisfying $|C_1'|=|C_1|$, \dots, $|C_n'|=|C_n|$.
\end{corollary}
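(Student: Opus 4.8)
The plan is to deduce Corollary~\ref{corol:sch} essentially as a special case of Theorem~\ref{thm:infrect}, combined with Lemma~\ref{lemma:ngyhjnlvvgn} to upgrade ``parallel'' to ``lattice-theoretically disjoint''. First I would address the second, more detailed assertion, since the first follows from it by choosing the partition appropriately. Suppose $\set{C_1,\dots,C_n}$ is a partition of $\Jir L$ into chains. These chains are pairwise set-theoretically disjoint because they form a partition, so Theorem~\ref{thm:infrect} (applied with the finite index set $I=\set{1,\dots,n}$) produces a length-preserving semimodular extension $K$ of $L$ together with pairwise \emph{parallel} chains $C_1',\dots,C_n'$ of $\Jir K$ with $\Jir K=C_1'\cup\dots\cup C_n'$ and $|C_i'|=|C_i|$ for each $i$. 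Since $K$ is a length-preserving extension of a finite lattice $L$, it is itself finite, so Lemma~\ref{lemma:ngyhjnlvvgn} applies: the partition $\set{C_1',\dots,C_n'}$ of $\Jir K$ into pairwise parallel chains is automatically a partition into pairwise lattice-theoretically disjoint chains. This already delivers the ``Furthermore'' part verbatim.

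For the first assertion I would simply feed in a \emph{width-optimal} partition. By the definition of width preceding Lemma~\ref{lemma:ngyhjnlvvgn}, and by Dilworth's chain-decomposition theorem, the finite poset $\Jir L$ of width $k$ decomposes into exactly $k$ chains $C_1,\dots,C_k$; I would invoke Dilworth here (or note that a width-$k$ poset is by definition a union of $k$ chains). Running the argument of the previous paragraph on this particular partition yields a finite semimodular length-preserving extension $K$ whose poset $\Jir K$ is the union of $k$ pairwise lattice-theoretically disjoint chains $C_1',\dots,C_k'$, with $|\Jir K|=\sum_i |C_i'|=\sum_i|C_i|=|\Jir L|$. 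It remains only to confirm $\width{\Jir K}=k$: since $\Jir K$ is a union of $k$ chains we have $\width{\Jir K}\leq k$, and since $K$ is a length-preserving extension the posets $\Jir L$ and $\Jir K$ carry the same number of elements with $\Jir L$ order-embedded appropriately, so $\width{\Jir K}\geq\width{\Jir L}=k$. Hence $\width{\Jir K}=k$, and by Definition~\ref{def:kdimrect}, $K$ is a $k$-dimensional rectangular lattice.

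I expect the only genuinely delicate point to be the verification that $\width{\Jir K}=k$ rather than strictly smaller. The inequality $\width{\Jir K}\leq k$ is immediate from the chain decomposition, but the reverse inequality requires care: I would argue it by exhibiting $k$ pairwise-incomparable elements of $\Jir K$. Picking one element from each of the $k$ parallel chains $C_i'$ gives $k$ pairwise parallel (hence pairwise incomparable) elements of $\Jir K$, which forces $\width{\Jir K}\geq k$; this uses precisely that the $C_i'$ are \emph{parallel}, which is exactly what Theorem~\ref{thm:infrect} guarantees. Everything else is bookkeeping: the cardinality count $|\Jir K|=|\Jir L|$ drops out of $|C_i'|=|C_i|$ summed over $i$, and the semimodularity, finiteness, and length-preservation of $K$ are inherited directly from the conclusion of Theorem~\ref{thm:infrect}. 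Thus no new construction is needed, and the corollary is a clean packaging of the main theorem together with Lemma~\ref{lemma:ngyhjnlvvgn} and Dilworth's theorem.
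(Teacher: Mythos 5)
Your argument is essentially the paper's own proof: feed a pairwise set-theoretically disjoint chain decomposition of $\Jir L$ into Theorem~\ref{thm:infrect} and upgrade ``pairwise parallel'' to ``pairwise lattice-theoretically disjoint'' with Lemma~\ref{lemma:ngyhjnlvvgn}. The differences are minor or in your favor: where you invoke Dilworth's theorem to get a partition into exactly $k$ chains, the paper simply disjointifies a width-witnessing union of $k$ chains by successive set differences ($C_i\mapsto C_i\setminus C_1$, then $C_i\mapsto C_i\setminus C_2$, and so on); and you make explicit the verification that $\width{\Jir K}=k$ (via the antichain obtained by picking one element from each nonempty parallel chain $C_i'$), a point that Definition~\ref{def:kdimrect} genuinely requires but the paper's proof leaves implicit. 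That antichain argument is the right one; by contrast, your earlier phrase about $\Jir L$ being ``order-embedded appropriately'' in $\Jir K$ would not survive scrutiny, since lowering genuinely weakens the order on the join-irreducible elements (cf.\ Observation~\ref{observe:nlkrttHgb}).

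One justification is false as stated: ``a length-preserving extension of a finite lattice is itself finite.'' It is not; for example, the four-element Boolean lattice has the modular lattice $M_\omega$ (length $2$, infinitely many atoms) as a length-preserving semimodular extension. Finiteness of $K$, which Definition~\ref{def:kdimrect} does require, should instead be deduced from $|\Jir K|=\sum_i|C_i'|=\sum_i|C_i|=|\Jir L|<\infty$ together with the fact that in a lattice of finite length every element is a join of join-irreducible elements, whence $|K|\leq 2^{|\Jir K|}$; alternatively, invoke Remark~\ref{rem:kthvtkhsnrfdn} as the paper's proof does, since for finite $L$ no directed unions occur and each lowering step preserves finiteness. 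Note also that Lemma~\ref{lemma:ngyhjnlvvgn}, the place where you cite finiteness, does not need it at all: it is stated for arbitrary \eqref{txt:convSMLFL}-lattices. With that one repair your proof is complete and coincides in substance with the paper's.
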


Note that the particular case $\width{\Jir L}=2$ of this result is due to Gr\"atzer and Knapp~\cite{gratzerknapp3}.

\begin{proof}[Proof of Corollary~\ref{corol:sch}] Note in advance that Remark~\ref{rem:kthvtkhsnrfdn} also applies to this proof. By definition, 
$\Jir L$ is the union of $k$ chains,  $C_1,\dots,C_k$.
We can assume that these chains  are pairwise set-theoretically disjoint. Indeed, if not so, then replacing $C_i$ by $C_i\setminus C_1$ for $i=2,\dots, k$, $C_1$ becomes set-theoretically disjoint from $C_2,\dots,C_k$. In the next step, we replace $C_i$ by $C_i\setminus C_2$ for $i=3,\dots, k$, and  $C_2$ becomes  set-theoretically  disjoint from $C_3,\dots,C_k$. And so on. Now that our chains are pairwise  set-theoretically disjoint, 
Theorem~\ref{thm:infrect} combined with Lemma~\ref{lemma:ngyhjnlvvgn} yields a length-preserving extension of $L$ into a \eqref{txt:convSMLFL}-lattice $K$ with the required property.
\end{proof}

\section{Some properties, examples, and further corollaries  of the lowering construction}\label{sect:props}

For future reference and a better insight into the construct $\Lext$, we prove some of its properties in this section as well as some corollaries.

\begin{lemma}\label{lemma:dist} 
With the assumptions and notations of Definition~\ref{def:Lext} and  Theorem~\ref{thmmain}, if $L$ is distributive, then so is $K=\Lext$.
\end{lemma}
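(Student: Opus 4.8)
The plan is to prove that $K = \Lext$ is distributive by working through the geometry $\wha G = (R, \alg G)$ that corresponds to $K$, using the fact established in the proof of Theorem~\ref{thmmain} that $K \cong \Lat{\wha G}$ and $L \cong \Lat{\wha F}$ where $\wha F = (P, \alg F) = \Geom L$. Since distributivity for a lattice of finite length is equivalent to the condition that every join-irreducible element has exactly one lower cover \emph{and} its covering relations behave distributively, the cleanest route is probably to exploit a known characterization: a semimodular lattice of finite length is distributive if and only if it is \emph{modular} and contains no sublattice isomorphic to the diamond $M_3$, or equivalently (and more usefully here) if and only if $\length L = |\Jir L|$. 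Indeed the abstract and Section~\ref{subsect:thgppl} advertise that $\length L = |\Jir L|$ for finite distributive $L$ becomes a consequence of the theorems; this suggests the characterization I would lean on.

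First I would recall from the proof of Theorem~\ref{thmmain} that $\length K = \length L$ and that $\Jir K$ is order-isomorphic to the poset $R = (R, \leq_R)$, which has the \emph{same underlying set} $P = \Jir L$ with only the order around $e$ modified (see \eqref{pbxDfRrrh} and \eqref{eq:smmHrGsGnrsz}). In particular $|\Jir K| = |\Jir L|$. Now if $L$ is distributive of finite length, then $\length L = |\Jir L|$. Combining these, $\length K = \length L = |\Jir L| = |\Jir K|$. So the key step is to verify the reverse implication: a semimodular lattice $M$ of finite length with $\length M = |\Jir M|$ is distributive. I would prove this by induction on $\length M$, using that in a semimodular lattice each covering $x \prec y$ introduces at least one new join-irreducible, and that equality $\length M = |\Jir M|$ forces each covering to introduce \emph{exactly} one, which pins down the flat lattice structure to be distributive. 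Since $K$ is already known to be semimodular (via $\wha G$ being a \eqref{pbx:RdGvBlJdX}-geometry and Proposition~\ref{prop:sDlspFl}), this delivers distributivity of $K$.

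Alternatively, and perhaps more in the spirit of the paper, I would argue directly at the level of the meet formula \eqref{eq:hbrDtmNklmDnSz} together with the join formula that can be read off \eqref{eq:lspszKlBgLwnqPHrtTs}. Distributivity of $L$ means $\alg F$ is closed under the relevant distributive identities; one would check that the set $\alg D$ of \eqref{eq:szKvrHnmtpth} and the doubled copies $\alg N$ of \eqref{eq:nszfrkdnzbgrrbmk} assemble into $\alg G$ in a way that preserves the distributive law. Concretely, for $X, Y, Z \in \alg G$ one verifies $X \cap (Y \vee Z) = (X \cap Y) \vee (X \cap Z)$ by a case analysis according to which of $X, Y, Z$ lie in $\alg F$ versus $\alg N$, reducing each case to the distributivity already known in $\alg F \cong L$ via the formulas for $\cap$ in \eqref{eq:lspszKlBgLwnqPHrtTs} and the correspondingly dual formula for $\vee$.

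The main obstacle I anticipate is the case analysis in the direct approach: with three variables each living in either $\alg F$ or $\alg N$, there are many subcases, and the presence or absence of $e$ inside each flat interacts with the closure operator $\mclf$ in the join formula, so one must be careful that joins computed in $\alg G$ (which use $\clf{\cdot}$ possibly landing in $\alg N$) really do reduce to joins in $\alg F$. For this reason I would \emph{prefer} the first approach via the invariant $\length K = |\Jir K|$, which sidesteps the combinatorial explosion entirely and isolates the single genuine lemma that $\length M = |\Jir M|$ characterizes distributivity among semimodular lattices of finite length; that lemma is the real content and the rest is bookkeeping already done in the proof of Theorem~\ref{thmmain}.
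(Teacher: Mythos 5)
Your reduction steps are all correct and are genuinely different from what the paper does: $\length K=\length L$ and $|\Jir K|=|\Jir L|$ do follow from Theorem~\ref{thmmain} (the poset $R$ built in its proof has the same underlying set as $P=\Jir L$), and the classical Birkhoff fact that a distributive lattice of finite length satisfies $\length L=|\Jir L|$ is safe to invoke without circularity, since it predates and does not depend on this paper. The genuine gap is that the entire weight of your argument rests on the converse statement: \emph{a semimodular lattice $M$ of finite length with $\length M=|\Jir M|$ is distributive}, and this you do not prove. Your sketch --- induction on $\length M$, plus the observation that equality forces every covering to introduce exactly one new join-irreducible, ``which pins down the flat lattice structure to be distributive'' --- is not an argument; that last clause is precisely what has to be shown. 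An induction on length does not obviously close it either: the principal ideals generated by the coatoms of $M_3$ are distributive chains while $M_3$ is not, so the induction step needs a substantive argument at the top, and that argument is exactly what is missing.

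The missing lemma is true, and the hole can be filled as follows. Write $J(x):=\Jir M\cap \sideal M x$ and let $h(x)$ denote the height of $x$. Along any maximal chain, each covering enlarges $J(\cdot)$ (because $x=\bigvee J(x)$ in a lattice of finite length), so $\length M=|\Jir M|$ forces each covering to add exactly one new join-irreducible; by (JHCC) this gives $|J(x)|=h(x)$ for every $x\in M$. Now invoke submodularity of the height function in a semimodular lattice of finite length, $h(x\vee y)+h(x\wedge y)\leq h(x)+h(y)$. Since $J(x\wedge y)=J(x)\cap J(y)$ holds in any lattice, counting yields $|J(x\vee y)|=h(x\vee y)\leq h(x)+h(y)-h(x\wedge y)=|J(x)\cup J(y)|$, and as $J(x)\cup J(y)\subseteq J(x\vee y)$, equality follows. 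Hence $x\mapsto J(x)$ is an injective homomorphism of $M$ into the Boolean lattice $\Pow{\Jir M}$, so $M$ is distributive. With this inserted, your route is complete and is shorter in spirit than the paper's proof, which instead stays inside the geometry $\wha G=(R,\alg G)$ and shows, via Birkhoff's structure theorem, that \emph{every} down-set of $R$ belongs to $\alg G$; there the distributivity (in fact, modularity) of $L$ enters through a case analysis on where $e$ sits in a given down-set. Your alternative second plan, verifying the distributive law by cases through \eqref{eq:lspszKlBgLwnqPHrtTs} and Lemma~\ref{lemma:jNbt}, would also work but would be the longest of the three arguments, as you anticipated.
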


\begin{proof} Due to the canonical correspondence established by Proposition~\ref{prop:sDlspFl}, we can work with $\wha F:=\Geom L=(P,\alg F)$ and, instead of $K$, with $\wha G=(R,\alg G)$ defined by \eqref{eq:szKvrHnmtpth}, \eqref{eq:DfjbtchgrNdwhDsRh}, and \eqref{pbxDfRrrh}.
By the well-known structure theorem of finite distributive lattices, see, for example,  Gr\"atzer~\cite[Theorem 107]{ggfoundbook},
$\alg F$ is the collection of all down-sets of $P=\Jir L$. 
By the same structure theorem and (F$\dwn$), it suffices to show that every down-set of $R$ belongs to $\alg G$. Let $S\subseteq R$ be a down-set of $R$. This means that for every $u\in S$, $\sideal R u\subseteq S$. If $e\notin S$, then $\sideal P u=\sideal R u\subseteq S$ for all $u\in S$ by 
\eqref{eq:smmHrGsGnrsz}, whence $S$ is a down-set of $P$ and we have that  $S\in\alg F\subseteq \alg G$, as required. 

Next, assume that $e\in S$. We can assume that $e$ is a maximal element of $S$ with respect to $\leq_R$ or, equivalently by \eqref{eq:smmHrGsGnrsz}, with respect to $\leq_P$. Indeed, if there is a $u\in S$ with $e<_P u$, then \eqref{eq:smmHrGsGnrsz} gives that  $\sideal P e\subset\sideal P u=\sideal R u\subseteq S$
and, for every $v\in S\setminus \set e$, $\sideal P v=\sideal R v\subseteq S$, whence $S$ is a down-set of $P$ and we obtain that $S\in\alg F\subseteq \alg G$.  So $e$ is a maximal element of $S$. Let $X:=S\setminus\set e=:\mine S$. Since $H=\sodeal R e\subset \sideal R e\subseteq S$, it follows that $H\subseteq X$. Since $X$ is a down-set of $P$ by  \eqref{eq:smmHrGsGnrsz}, $X\in \alg F$. If no cover of $X$ in $\alg F$ contains $e$, then $X\in \alg D$ by \eqref{eq:szKvrHnmtpth} and $S=\plue X\in\alg N\in\alg G$, as required. 
So we can assume that there is a $Y\in \alg F$ such that $X\fprec Y\ni e$.  Since $e\notin X$, we know that $\sideal P e\not\leq X$ in $\alg F\cong L$. 
But $\sideal P e\leq Y$ since $e\in Y$, so $X<X  \svee{\alg F} \sideal P e 
\leq Y$, whence  $X\fprec Y$ yields that $X  \svee{\alg F} \sideal P e = Y$. Transposed intervals are isomorphic in a distributive lattice (and even in a modular lattice), whereby $X \swedge{\alg F} \sideal P e\fprec \sideal P e$. But the only lower cover of $\sideal P e$ in $\alg F$ is $\sodeal P e$ since $\sideal P e$ corresponds to $e\in \Jir L$  by the canonical correspondence. 
Hence, $X\cap \sideal P e=X\swedge{\alg F} \sideal P e=\sodeal P e$. Therefore, $\sodeal P e\subseteq X$ and $\sideal P e\subseteq \plue X = S$. 
The inclusion $\sideal P e\subseteq S$ together with \eqref{eq:smmHrGsGnrsz}, which gives that $\sideal P u=\sideal R u\subseteq S$ for all $u\in S\setminus\set e$, imply that $S$ is a down-set of $P$. Hence, $S\in \alg F\subseteq \alg G$, as required.
\end{proof}

The following statement has recently been proved in Cz\'edli and Molkhasi~\cite[Lemma 4.4]{czgmolkhasi}. The proof here is shorter.

\begin{corollary}[Cz\'edli and Molkhasi~\cite{czgmolkhasi}]\label{corol:czgmolkhasi}
Corollary~\ref{corol:sch} remains true if we replace ``semimodular''' by ``distributive'' at both occurrences.
\end{corollary}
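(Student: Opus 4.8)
The plan is to reuse the proof of Corollary~\ref{corol:sch} almost verbatim and then upgrade its conclusion to distributivity by feeding Lemma~\ref{lemma:dist} through the mechanism of Remark~\ref{rem:kthvtkhsnrfdn}. Since every distributive lattice is semimodular, a finite distributive $L$ satisfies the hypotheses of Corollary~\ref{corol:sch}. Hence, starting from the given partition $\set{C_1,\dots,C_n}$ of $\Jir L$ into chains, I would first make the chains pairwise set-theoretically disjoint exactly as in the proof of Corollary~\ref{corol:sch}, and then apply Theorem~\ref{thm:infrect} together with Lemma~\ref{lemma:ngyhjnlvvgn} to obtain a length-preserving extension $K$ of $L$ whose $\Jir K$ is the union of pairwise lattice-theoretically disjoint chains $C_1',\dots,C_n'$ with $|C_i'|=|C_i|$ for all $i$; in particular $|\Jir K|=|\Jir L|$ and $\width{\Jir K}=k$, so that $K$ is a $k$-dimensional rectangular lattice. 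All the combinatorial requirements are thus already guaranteed by the semimodular version, and nothing new has to be proved about the shape of $\Jir K$.

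The only genuinely new point is that $K$ may be taken distributive. Here I would let $\Gamma$ be the property ``is distributive'' and invoke Remark~\ref{rem:kthvtkhsnrfdn}. Lemma~\ref{lemma:dist} says precisely that the lowering construction $\Lext$ preserves distributivity, i.e.\ that $\Gamma$ is preserved by lowering a join-irreducible element. Because $L$ is finite, Remark~\ref{rem:kthvtkhsnrfdn} tells us that no directed unions arise in the proof of Theorem~\ref{thm:infrect}, so it is enough that the lowering construction preserve $\Gamma$ — which is exactly what Lemma~\ref{lemma:dist} delivers. Consequently $\Gamma$ holds in $K$; that is, $K$ is distributive, and therefore $K$ is a distributive $k$-dimensional rectangular lattice, which is the reading of Definition~\ref{def:kdimrect} with ``semimodular'' replaced by ``distributive''.

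I do not expect a real obstacle, since Lemma~\ref{lemma:dist} already carries the analytic weight and was evidently placed to make this corollary immediate. The only care needed is bookkeeping: confirming that the extension produced inside Corollary~\ref{corol:sch} is indeed assembled from finitely many applications of $\Lext$, so that the finite case of Remark~\ref{rem:kthvtkhsnrfdn} applies, and checking that ``distributive'' is the correct substitution at both occurrences of ``semimodular'' (the first in the hypothesis on $L$, the second inside Definition~\ref{def:kdimrect}). Both verifications are routine, which is exactly why the argument here is shorter than the original proof in Cz\'edli and Molkhasi~\cite{czgmolkhasi}.
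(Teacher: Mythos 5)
Your proposal is correct and is essentially identical to the paper's own proof, which reads ``Combine Lemma~\ref{lemma:dist} with the first sentence of the proof of Corollary~\ref{corol:sch}'' --- that first sentence being precisely the invocation of Remark~\ref{rem:kthvtkhsnrfdn} with $\Gamma$ taken to be distributivity. Your version merely spells out the bookkeeping (finiteness of $L$, hence no directed unions; preservation of width and chain sizes) that the paper leaves implicit.
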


\begin{proof} Combine  Lemma~\ref{lemma:dist} with the first sentence of the proof of Corollary~\ref{corol:sch}.
\end{proof}

Although we will soon explain the concept of join-distributivity occurring in the remark below, the reader can safely skip over these details. 

\begin{example}In general, the  passage from $L$ to $\Lext$ preserves neither modularity, nor join-distributivity. This is witnessed by Figure~\ref{fig2}, where $\set{a,b,c}$ generates a ``pentagon'' $N_5$ in $K:=\Lext$ on the left while it generates an $M_3$ on the right.
\end{example}

A finite semimodular lattice is \emph{join-distributive} if it contains no $M_3$ (five-element
nondistributive modular lattice) as a sublattice. Equivalently, if it contains no $M_3$ as a cover-preserving sublattice.
Note that Gr\"atzer and Knapp~\cite{gratzerknapp1} called join-distributive
planar semimodular lattices \emph{slim}.
Join-distributive lattices were introduced in many papers  in many different ways and with different names; see Proposition 2.1 in Cz\'edli~\cite{czg:coord} for surveying eight possible definitions, Proposition 6.1 in Cz\'edli and Adaricheva~\cite{czgadari} for two additional definitions, and see Monjardet~\cite{monjardet} and   Adaricheva, Gorbunov, and Tumanov~\cite{adarigorbtum} for further information.

\begin{figure}[h]
\centerline
{\includegraphics[width=\textwidth]{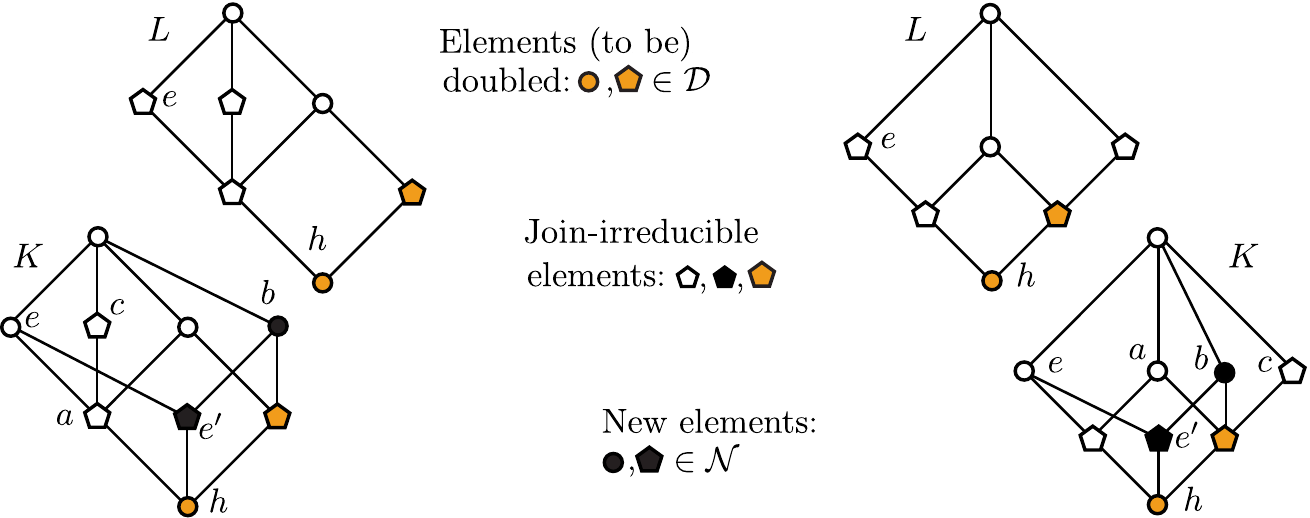}}      %[scale=0.93]{czgamfig1}}
\caption{Neither modularity, nor join-distributivity is preserved}\label{fig2}
\end{figure}

The following statement is well known; we only present it here to show an interesting easy application of our results.

\begin{corollary} The length of a finite distributive lattice $L$ is $|\Jir L|$.
\end{corollary}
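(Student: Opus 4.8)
The plan is to derive this classical fact as an immediate consequence of the machinery already built, so that the corollary becomes essentially a one-line observation once the right theorem is invoked. Since $L$ is a finite distributive lattice, it is in particular a \eqref{txt:convSMLFL}-lattice, so all the apparatus of the paper applies. The cleanest route is to use the length-preserving extension results together with Lemma~\ref{lemma:dist}: distributivity is preserved by the lowering construction.

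First I would apply Corollary~\ref{corol:czgmolkhasi} (equivalently, Theorem~\ref{thm:infrect} combined with Lemma~\ref{lemma:dist}) to the partition of $\Jir L$ into singleton chains $\set{\set u : u \in \Jir L}$. This produces a length-preserving distributive extension $K$ of $L$ whose join-irreducibles split into $|\Jir L|$ pairwise parallel (hence, by Lemma~\ref{lemma:ngyhjnlvvgn}, pairwise lattice-theoretically disjoint) singleton chains $C_u'$ with $|C_u'| = 1$. Thus $\Jir K$ consists of $|\Jir L|$ pairwise parallel atoms, so $\Jir K = \At K$, which means $K$ is a \emph{geometric} distributive lattice. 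A geometric distributive lattice is a Boolean lattice, namely the powerset of its atoms, so $\length K = |\At K| = |\Jir K| = |\Jir L|$. Because the extension is length-preserving, $\length L = \length K = |\Jir L|$, which is the claim.

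An even more direct alternative, and probably the shortest to write, is to observe that for a distributive $L$ the geometry $\Geom L = (P, \alg F)$ has $\alg F$ equal to the collection of all down-sets of $P = \Jir L$ (as already noted in the proof of Lemma~\ref{lemma:dist}, citing Gr\"atzer~\cite[Theorem~107]{ggfoundbook}). A maximal chain in the lattice of down-sets of a finite poset $P$ is obtained by adjoining the elements of $P$ one at a time in an order extending $\leq_P$, so it has exactly $|P| + 1$ members and hence length $|P|$. Since $(\alg F, \subseteq) \cong L$ by \eqref{txt:szLvhHmwR}, and the Jordan--H\"older chain condition guarantees every maximal chain has the same length, we get $\length L = |P| = |\Jir L|$.

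The main point to get right is simply the arithmetic linking $\length K$ to the number of atoms in the geometric (Boolean) lattice $K$, together with the observation that the singleton-chain partition forces every join-irreducible of $K$ to become an atom; there is no genuine obstacle here, since all the substantive work has already been done in Theorem~\ref{thm:infrect} and Lemma~\ref{lemma:dist}. I would present the first route, as it most transparently advertises that the corollary is a ``trivial consequence'' of the paper's theorems, exactly as claimed in the introduction.
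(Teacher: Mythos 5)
Your first route is exactly the paper's proof: Theorem~\ref{thm:infrect} applied to the singleton partition $\set{\set u:u\in\Jir L}$, with distributivity carried over to the extension via Lemma~\ref{lemma:dist} (through Remark~\ref{rem:kthvtkhsnrfdn}), yields a length-preserving Boolean extension $K$, whence $\length L=\length K=|\At K|=|\Jir K|=|\Jir L|$. Your alternative down-set-counting argument is the classical direct proof and bypasses the paper's machinery entirely, but since you chose to present the first route, your proposal coincides with the paper's approach.
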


\begin{proof} Theorem~\ref{thm:infrect} applied for the trivial partition $\set{\set u:u\in \Jir L}$, Remark~\ref{rem:kthvtkhsnrfdn}, and Lemma~\ref{lemma:dist}  yield an  extension of $L$ into  a boolean lattice $K$ such that  $|\Jir K|=|\Jir L|$ and $\length K=\length L$. Since $K$ is isomorphic to the powerset lattice $(\Pow{\Jir K},\subseteq)$, it is trivial that $\length K=|\At K|=|\Jir K|$.
The equalities listed so far imply the corollary.
\end{proof}

We continue the paper with three lemmas that give a better understanding of the $\Lext$ construction.

\begin{lemma}\label{lemma:cNvSx}
With the assumptions and notations of Definition~\ref{def:Lext} and  Theorem~\ref{thmmain}, $D$ and $D\cup N$ are convex subsets of $K=\Lext$ and $h$ is their common smallest element. Also, $D$ is a convex subset of $L$ and $N$ is a convex subset of $K$.
\end{lemma}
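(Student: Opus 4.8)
The plan is to transfer everything to the geometry side via Proposition~\ref{prop:sDlspFl}, where the four sets in question become $\alg D$, $\alg D\cup\alg N$, $\alg D$ (inside $\alg F$), and $\alg N$, and then simply cite \eqref{eq:hmgRsknlnKsLt}, which was already proved inside the joint proof of Theorem~\ref{thmmain} and Lemma~\ref{lemma:meet}. Recall that under the canonical correspondence $\liso$ of \eqref{eq:lSlkpzs}, an element $x\in L$ corresponds to $\clf{\set{}}$-type flat $P\cap\sideal L x\in\alg F$, and a new element $\lift x\in N$ corresponds to $\set e\cup(P\cap\sideal L x)=\plue{(P\cap\sideal L x)}\in\alg N$. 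Thus the image of $D$ is exactly $\alg D$, the image of $N$ is exactly $\alg N$, and the images of $D\cup N$ and of $L$ are $\alg D\cup\alg N$ and $\alg F$, respectively. Because $\liso$ and $\ogiso$ are order isomorphisms of the respective lattices/geometries, ``convex subset'' is preserved in both directions, and so is ``smallest element''.

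Concretely, I would argue as follows. By \eqref{eq:hmgRsknlnKsLt}, $\alg D$ is a convex subset of $\alg F\cong L$, so pulling back along the isomorphism $\liso$ shows that $D$ is a convex subset of $L$; and since $L$ is a sublattice of $K$ with the order inherited (the first line of \eqref{eq:zSznJznKgTsh}), convexity of $D$ in $L$ together with the fact that no element of $N$ lies $\leq_K$-between two elements of $D$ yields that $D$ is convex in $K$ as well. For the latter point I would observe from \eqref{eq:zSznJznKgTsh} that if $x_1\leq_K w\leq_K x_2$ with $x_1,x_2\in D\subseteq L$ and $w=\lift v\in N$, then $x_1\leq_L v$ and $v\svee L e\leq_L x_2$, forcing $e\leq_L x_2$; but $x_2\in D$ means no element above $x_2$ (covering it) can sit above $e$, and in fact $x_2\in D$ with $e\leq_L x_2$ is impossible because $x_2\prec_L x_2$ is not a covering to worry about---more cleanly, this impossibility is exactly what \eqref{eq:mrnVvlSgrnFrkBrdN} records on the geometry side, so I would just invoke the already-established convexity of $\alg D\cup\alg N$ and of $\alg N$ in $\alg G\cong K$ instead of redoing the computation. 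Hence $D\cup N$ and $N$ are convex in $K$ by pulling \eqref{eq:hmgRsknlnKsLt} back along $\ogiso^{-1}$.

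For the smallest-element claim, \eqref{eq:hmgRsknlnKsLt} and the surrounding text already state that $H$ is the least member of $\alg D\cup\alg N$ and that $H\in\alg D$. Under the correspondence $H=P\cap\sideal L h$ corresponds to $h\in L$, so $h$ is the $\leq_K$-smallest element of both $D$ and $D\cup N$. I would also note $h=\alit{(e')}$-style bookkeeping is unnecessary here; one only needs that $\liso(h)=H$ and that $\liso$ is an order isomorphism onto $\alg F$, restricting to a bijection $D\to\alg D$.

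The only mild obstacle is that convexity must be checked in $K$, not merely inside $L$ or inside $N$ separately, so one must rule out an element of $N$ sitting $\leq_K$-between two elements of $D$ and, dually, an element of $L\setminus(D\cup N)$ sitting between two elements of $N$ or between elements of $D\cup N$. Both are precisely the convexity statements ``$\alg D$ convex in $\alg F$'', ``$\alg N$ convex in $\alg G$'', and ``$\alg D\cup\alg N$ convex in $\alg G$'' from \eqref{eq:hmgRsknlnKsLt}, transported through the canonical correspondence; so the entire proof reduces to invoking \eqref{eq:hmgRsknlnKsLt} together with the fact that $\liso$ and $\ogiso$ are isomorphisms, and no genuinely new verification is required.
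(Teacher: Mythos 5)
Your proposal is correct and follows essentially the same route as the paper: the paper's own proof is a one-liner stating that, since $L$ is a cover-preserving sublattice of $K$, the lemma is just a transcript of \eqref{eq:hmgRsknlnKsLt} via the canonical correspondence. Your extra verification that no element of $N$ can sit $\leq_K$-between two elements of $D$ (which reduces to $e\not\leq_L x$ for $x\in D$, by taking $y=x$ in \eqref{eq:wmVkrjfNbBj}) merely spells out bookkeeping the paper leaves implicit.
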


\begin{proof} Since $L$ is a cover-preserving sublattice of $K$, the lemma is just a transcript of \eqref{eq:hmgRsknlnKsLt} by the canonical correspondence. 
\end{proof}

A possible way to understand $\Lext$ is to derive  its Hasse diagram from that of $L$. By describing the covering relation in $\Lext$, the following lemma is useful.

\begin{lemma}\label{lemma:bdscgrpcv}
With the assumptions and notations of Definition~\ref{def:Lext} and Theorem~\ref{thmmain}, we have that for any $x,y\in K=\Lext$,
\begin{equation}x \prec_K y \iff 
\begin{cases}
x,y\in L\text{ and }x\prec_L y\qquad\text{ or}\cr
x,y\in N\text{ and }\alit x\prec_L \alit y\qquad\text{ or}\cr
x\in L,\text{ } y\in N,\text{ and }x=\alit y \qquad\text{ or}\cr
%x\in N,\text{ } y=e,\text{ and }\alit x\in\Max{D\cap\sideal L e} \qquad\text{ or}\cr
x\in N,\text{ } y=\alit x \svee L e,\text{ and }\length{[\alit x, \alit x\svee L e]_L}=2.
\end{cases}
\label{eq:lfltjklvrkmszkhtdbtfjmnpk}
\end{equation} 
\end{lemma}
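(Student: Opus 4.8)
The plan is to prove the lemma by passing to the geometric side via the canonical correspondence of Proposition~\ref{prop:sDlspFl}, exactly as in the proof of Theorem~\ref{thmmain}. Writing $\wha F=(P,\alg F)=\Geom L$ and letting $\wha G=(R,\alg G)$ be the geometry built there, with $\alg G=\alg F\cup\alg N$ a disjoint union, I recall that under the correspondence each $x\in L$ is encoded by $X:=P\cap\sideal L x\in\alg F$ and each $\lift x\in N$ (with $x\in D$) by $\plue X\in\alg N$, while $\alit{(\lift x)}$ corresponds to $\mine{(\plue X)}=X$. Since $\Lat{\wha G}\cong K$ is a lattice isomorphism, the covering relation $\prec_K$ translates into the covering relation $\gprec$ of $(\alg G,\subseteq)$. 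As $\alg G$ is the disjoint union of $\alg F$ and $\alg N$, every cover $S\gprec T$ splits into four types according to whether each of $S,T$ lies in $\alg F$ or in $\alg N$; the goal is to show that these four types are precisely the four alternatives of \eqref{eq:lfltjklvrkmszkhtdbtfjmnpk}.

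The three ``homogeneous or downward-into-$\alg N$'' types are routine. For $S,T\in\alg F$ I would check $S\gprec T\iff S\fprec T$ (the forward direction is immediate; for the converse one rules out an intermediate member of $\alg N$ using the defining property of $\alg D$ in \eqref{eq:szKvrHnmtpth} together with \eqref{eq:mrnVvlSgrnFrkBrdN}); this gives Case~1. For $S,T\in\alg N$, the order isomorphism $\plue{\cdot}\colon\alg D\to\alg N$ together with the convexity of $\alg D$ in $\alg F$ and of $\alg N$ in $\alg G$ from \eqref{eq:hmgRsknlnKsLt} yields $\plue A\gprec\plue B\iff A\fprec B$, which is Case~2. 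For $S\in\alg F$, $T\in\alg N$, I would show $e\notin S$ (else $\sodeal P e\subseteq\mine S\subseteq\mine T\in\alg D$ contradicts \eqref{eq:mrnVvlSgrnFrkBrdN}), so $S\subseteq\mine T\subsetneq T$, and the cover forces $S=\mine T$, i.e. $T=\plue S$; conversely $\mine T\gprec T$ always, since the two sets differ by the single element $e$. This is Case~3.

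The remaining type, $S\in\alg N$ and $T\in\alg F$, is the crux. Writing $S=\plue A$ with $A\in\alg D$, I first pin down $T$: from $e\in S\subseteq T\in\alg F$ we get $\sideal P e\subseteq T$ and $A\subseteq T$, hence $W:=A\svee{\alg F}\sideal P e\subseteq T$; since $S\subseteq W$ and $S\neq W$ (because $\alg F\cap\alg N=\emptyset$), the cover forces $T=W$, which corresponds to $y=\alit x\svee L e$. It then remains to decide when $S=\plue A\gprec W$. No member of $\alg F$ can lie strictly between, because any $\alg F$-set containing $S$ contains both $\sideal P e$ and $A$, hence contains $W$; and a member $\plue C$ of $\alg N$ lies strictly between if and only if $C\in\alg D$ and $A\subsetneq C\subseteq W$. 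Thus $S\gprec W$ is equivalent to $\alg D\cap[A,W]_{\alg F}=\set A$.

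The main obstacle is the final equivalence $\alg D\cap[A,W]_{\alg F}=\set A\iff\length{[A,W]_{\alg F}}=2$, which, after transporting the interval back to $[\alit x,\alit x\svee L e]_L$ through the isomorphism $\liso$, is exactly the length condition in Case~4. Here I would argue as follows. The length is never $1$: a cover of $A\in\alg D$ cannot contain $e$, while $e\in W$. If the length is $2$, then a member $C\in\alg D$ with $A\subsetneq C\subsetneq W$ would be a middle element $C$ covered by $W\ni e$, contradicting $C\in\alg D$; and $W\notin\alg D$ since $e\in W$, so indeed $\alg D\cap[A,W]_{\alg F}=\set A$. Conversely, suppose $A$ is the only member of $\alg D$ in $[A,W]_{\alg F}$. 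Pick a cover $A\fprec Z_1\subseteq W$; then $Z_1\notin\alg D$, so (as $H\subseteq Z_1$ and $e\notin Z_1$) the definition of $\alg D$ yields a cover $Z_1\fprec Z_1'$ with $e\in Z_1'$. Now $\sideal P e\subseteq Z_1'$ together with $A\subseteq Z_1\subseteq Z_1'$ gives $W\subseteq Z_1'$, whence $Z_1\fprec W$, and the Jordan--H\"older chain condition forces $\length{[A,W]_{\alg F}}=2$. Assembling the four cases and translating back through the canonical correspondence then completes the proof; I expect essentially all of the real work to sit in this last length-$2$ equivalence, the other three cases being bookkeeping on top of the machinery already established for Theorem~\ref{thmmain}.
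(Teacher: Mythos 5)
Your proof is correct, but it takes a genuinely different route from the paper's. The paper stays on the lattice side and reuses what Theorem~\ref{thmmain} already provides: the case $x,y\in L$ holds because $L$ is a cover-preserving sublattice of $K$; the two cases with $y\in N$ follow from the second and third lines of \eqref{eq:zSznJznKgTsh} and the convexity statement of Lemma~\ref{lemma:cNvSx}; and in the crucial case $x\in N$, $y\in L$, the last line of \eqref{eq:zSznJznKgTsh} shows that $\alit x\svee L e$ is the least element of $\sfilter K x\cap L$, so $y=\alit x\svee L e$, after which (JHCC) applied \emph{in} $K$ (legitimate, as $K$ is already known to be semimodular with $\length K=\length L$) converts the chain $\alit x\prec_K x\prec_K y$ into the length-$2$ condition and back. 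You instead re-derive the covering relation wholly on the geometry side: your four-way split of $\alg G=\alg F\cup\alg N$ matches the four lines of \eqref{eq:lfltjklvrkmszkhtdbtfjmnpk}; your first three cases reprove, set-theoretically, what the paper reads off from the cover-preserving sublattice property and Lemma~\ref{lemma:cNvSx}; and your fourth case replaces the JHCC-in-$K$ shortcut by the equivalence $\alg D\cap[A,W]_{\alg F}=\set A\iff\length{[A,W]_{\alg F}}=2$, proved from the definition \eqref{eq:szKvrHnmtpth} of $\alg D$ and (JHCC) in $\alg F\cong L$ only. Your details are sound, including the two delicate points: forcing $T=W$ from the cover, and the converse direction in which $Z_1\notin\alg D$ yields a cover $Z_1\fprec Z_1'\ni e$ and hence $Z_1\fprec W$. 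The trade-off is clear: the paper's proof is much shorter because it exploits the semimodularity of $K$ established in Theorem~\ref{thmmain}, whereas your argument needs (JHCC) only in $L$ and is self-contained at the level of $\wha G$ --- at the price of essentially replaying the pattern of Case~\ref{case3} from the proof of Theorem~\ref{thmmain} inside your fourth case.
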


\begin{proof} 
For $x,y\in L$, \eqref{eq:lfltjklvrkmszkhtdbtfjmnpk} is clear, because $L$ is a cover-preserving sublattice of $K$. 
The second line of \eqref{eq:lfltjklvrkmszkhtdbtfjmnpk} follows from the second line of \eqref{eq:zSznJznKgTsh} since $N$ is a convex subset of $K$ by  Lemma~\ref{lemma:cNvSx}. 
For $x\in L$ and  $y\in N$, the statement is trivial by the third line of \eqref{eq:zSznJznKgTsh}. 

So, in the rest of the  proof, we assume that $x\in N$ and $y\in L$. Note that $x\neq \alit x\svee L e\in L$.
First, assume that $x\prec_K y$. By the last line of \eqref{eq:zSznJznKgTsh}, the least element of $\sfilter K x\cap L$ is $\alit x\svee L e$, whence $y=\alit x\svee L e$. Since $\alit x\prec_K x$ and $x\prec_K y=\alit x\svee L e$, (JHCC) yields that $\length{[\alit x, \alit x\svee L e]_L}=\length{[\alit x, \alit x\svee L e]_K}=2$, as required. 

Second, assume that  $y=\alit x\svee L e$ and $\length{[\alit x, \alit x\svee L e]_L}=2$. Using the last line of \eqref{eq:zSznJznKgTsh} and that $x\neq \alit x\svee L e$, we have that $x<_K\alit x\svee L e=y$. Since 
$\length{[\alit x,y]_K}=\length{[\alit x,y]_L}=2$
and $\alit x\prec_K x<_K y$, (JHCC) implies that  
 $x\prec_K y$, completing the proof. 
\end{proof}

\begin{lemma}\label{lemma:jNbt} With the assumptions and notations of Definition~\ref{def:Lext} and Theorem~\ref{thmmain}, we have that for any $x,y\in K=\Lext$,
\begin{equation}x\svee K y = 
\begin{cases}
x\svee L y,&\text{if }x,y\in L,\cr
\lift{(\alit x\svee L \alit y)},&\text{if }x,y\in N\text{ and }\alit x\svee L \alit y\in D,\cr
\alit x\svee L \alit y\svee L e,&\text{if }x,y\in N\text{ and }\alit x\svee L \alit y\not\in D,\cr
\lift{(\alit x\svee L  y)},&\text{if }x\in N,\text{ }y\notin N,\text{ and }\alit x\svee L  y\in D,\cr
\alit x\svee L y\svee L e,&\text{if }x\in N,\text{ }y\notin N\text{ and }\alit x\svee L  y\not\in D,\cr
\lift{(x\svee L  \alit y)},&\text{if }x\notin N,\text{ }y\in N,\text{ and }x\svee L  \alit y\in D,\cr
x\svee L \alit y\svee L e,&\text{if }x\notin N,\text{ }y\in N\text{ and }x\svee L  \alit y\not\in D.
\end{cases}
\label{eq:sSzgklTkprjhFcbz}
\end{equation} 
\end{lemma}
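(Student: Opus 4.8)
The plan is to prove Lemma~\ref{lemma:jNbt} by passing to the geometric side via the canonical correspondence of Proposition~\ref{prop:sDlspFl}, exactly as in the proof of Theorem~\ref{thmmain}, so that joins become closures of unions and the formula for $\svee K$ can be read off from the already-established intersection formula \eqref{eq:lspszKlBgLwnqPHrtTs}. Recall that in $\wha G=(R,\alg G)$ we have $X\svee{\alg G}Y=\clg{X\cup Y}$ and the join-irreducible element $e'$ corresponds to $\sideal R e$ with lower cover $H$. The key structural facts I would exploit are: $\alg F\cong L$, $\alg N\cong\alg D$ under the maps $X\mapsto\mine X$ and $Y\mapsto\plue Y$ from \eqref{eq:mplnvNtk}, that $\alg N$ and $\alg D\cup\alg N$ are convex in $\alg G$ by \eqref{eq:hmgRsknlnKsLt}, and that $H$ is the least element of $\alg D\cup\alg N$.

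The key step is a clean description of $\clg{Z}$ for $Z\subseteq R$. I would first show that for $Z\subseteq R$ with $e\notin Z$, we have $\clg Z=\clf Z$ if $\clf Z\in\alg D$ is \emph{not} forced to absorb $e$, but more usefully I would argue case-by-case on membership in $\alg N$. The cleanest route is the following dichotomy, which mirrors the last two lines of each grouped case in \eqref{eq:sSzgklTkprjhFcbz}: given $X,Y\in\alg G$, form the ``underlying'' flats by deleting $e$ where present, take their $\alg F$-join $W:=\clf{(\mine X\text{ or }X)\cup(\mine Y\text{ or }Y)}$, and then decide whether the join in $\alg G$ stays in $\alg N$ or jumps into $\alg F$ according to whether $W\in\alg D$ or $W\notin\alg D$. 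Concretely, when $X,Y\in\alg N$ I would set $W:=\clf{\mine X\cup\mine Y}=\alit x\svee L\alit y$ (translating back to $L$); if $W\in\alg D$ then $e$ may be adjoined without forcing a further cover, so $X\svee{\alg G}Y=\plue W\in\alg N$, giving $\lift{(\alit x\svee L\alit y)}$; if $W\notin\alg D$ then by \eqref{eq:szKvrHnmtpth} some cover of $W$ in $\alg F$ already contains $e$, so $\clg{\set e\cup W}$ climbs into $\alg F$ and equals $W\svee L e=\alit x\svee L\alit y\svee L e$. The mixed cases $x\in N,\,y\notin N$ and $x\notin N,\,y\in N$ are handled identically, using that $y\notin N$ means $y\in L$ so its underlying flat is $y$ itself, while the $x,y\in L$ case is immediate since $L$ is a sublattice of $K$ by the sublattice conclusion of Theorem~\ref{thmmain}.

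The main obstacle I anticipate is justifying the boundary behavior at the membership switch: namely that when the candidate underlying flat $W$ lies in $\alg D$, the closure $\clg{\set e\cup W}$ really is $\plue W$ and does \emph{not} escape $\alg N$, whereas when $W\notin\alg D$ it jumps \emph{exactly} to $W\svee L e$ rather than higher. For the first half I would invoke the convexity of $\alg D\cup\alg N$ from \eqref{eq:hmgRsknlnKsLt} together with the fact that $\plue W\in\alg N\subseteq\alg G$ is itself closed, so $\clg{\set e\cup W}\subseteq\plue W$, and the reverse inclusion is trivial. For the second half I would note that $\set e\cup W\subseteq W\svee L e\in\alg F$, so $\clg{\set e\cup W}\subseteq W\svee L e$; conversely, since $W\svee L e$ is the smallest flat of $\alg F$ containing both $W$ and $e$ and every member of $\alg G$ containing $\set e\cup W$ must contain this $\alg F$-flat (as $W\notin\alg D$ prevents an intermediate $\alg N$-member from blocking the ascent, by the argument already used in Case~\ref{case3} of Theorem~\ref{thmmain}), equality follows. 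Once these two closure computations are in hand, reading \eqref{eq:sSzgklTkprjhFcbz} off the seven cases is a routine transcription, entirely parallel to how \eqref{eq:hbrDtmNklmDnSz} was obtained from \eqref{eq:lspszKlBgLwnqPHrtTs}.
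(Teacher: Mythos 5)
Your proposal is correct, but it takes a genuinely different route from the paper's own proof. The paper stays entirely on the lattice side: since $L$ is already known to be a sublattice of $K$, it verifies only the fifth row in detail (the others being symmetric or routine), showing directly that $z\svee L e$, where $z:=\alit x\svee L y\notin D$, is the least upper bound of $\set{x,y}$ in $K$. Indeed, it is an upper bound by \eqref{eq:zSznJznKgTsh}; and any upper bound $t$ must lie in $L$, because $t\in N$ would give $\alit x\leq_L z\leq_L \alit t$ and hence, by the convexity of $D$ from Lemma~\ref{lemma:cNvSx}, the contradiction $z\in D$; then $x\leq_K t$ forces $\alit x\svee L e\leq_L t$, so $z\svee L e\leq_L t$. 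You instead return to the geometry and compute $\clg{X\cup Y}$ via the dichotomy $W\in\alg D$ versus $W\notin\alg D$, where $W$ is the $\alg F$-join of the $e$-deleted parts: in the first case the closure is the flat $\plue W\in\alg N$ itself, and in the second case the convexity of $\alg D$ from \eqref{eq:hmgRsknlnKsLt} excludes any member of $\alg N$ containing $\set e\cup W$, so the $\alg G$-closure collapses to the $\alg F$-closure, which is the flat of the corresponding row's join with $e$. The two arguments hinge on the same convexity fact, since Lemma~\ref{lemma:cNvSx} is exactly the lattice transcript of \eqref{eq:hmgRsknlnKsLt}, so they are close cousins; but yours is uniform (all seven rows follow from a single closure dichotomy) and parallels how the paper derived the meet formula \eqref{eq:hbrDtmNklmDnSz} as a transcript of \eqref{eq:lspszKlBgLwnqPHrtTs}, whereas the paper's version is shorter because it can quote the already-transcribed order \eqref{eq:zSznJznKgTsh} and needs no closure computations. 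One step you leave implicit and should state explicitly: $\clg{X\cup Y}=\clg{\set e\cup W}$. This holds because any flat of $\alg G$ containing $X\cup Y$ either lies in $\alg F$, and then, being $\alg F$-closed, it contains $W$, or lies in $\alg N$, and then its $e$-deleted part is a member of $\alg D\subseteq\alg F$ containing the $e$-deleted part of $X\cup Y$, hence containing $W$; with that noted, your two closure computations do yield all rows of \eqref{eq:sSzgklTkprjhFcbz}.
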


Of course, ``$\in N$'' and ``$\notin N$'' in \eqref{eq:sSzgklTkprjhFcbz} are equivalent to ``$\notin L$'' and ``$\in L$'', respectively. 

\begin{proof}[Proof of Lemma~\ref{lemma:jNbt}] Since we already know from Theorem~\ref{thmmain} that $L$ is a sublattice of $K$, the first line of \eqref{eq:sSzgklTkprjhFcbz} is clear. The rest of \eqref{eq:sSzgklTkprjhFcbz} follows by straightforward considerations and since $x$ and $y$ play a symmetrical role. Hence, we only give the details for the fifth row of  \eqref{eq:sSzgklTkprjhFcbz}, where $x\in N$,  $y\notin N$, and  $z:=\alit x\svee L y\notin D$. We need to show that $z\svee L e$ is the least upper bound of $\set{x,y}$ in $K$. The inequality  
$x\leq_K z\svee L e$ is clear by the fourth line of \eqref{eq:zSznJznKgTsh} while  $y\leq_K z\svee L e$ follows from $y\leq_L z$.
Hence, $z\svee L e$ is an upper bound of $\set{x,y}$ in $K$. Let $t\in K$ be another upper bound. We claim that $t\notin N$. For the sake of contradiction, suppose that $t\in N$. Then  \eqref{eq:zSznJznKgTsh} yields that $\alit x\leq_L \alit t$ and $y\leq_L \alit t$. Hence, $\alit x\leq_L z= \alit x\svee L y\leq_L \alit t$. Thus,  Lemma~\ref{lemma:cNvSx} gives that $z\in D$, which is a contradiction showing that $t\notin N$. 
Now $x\leq_K t$ and \eqref{eq:zSznJznKgTsh} gives that 
$\alit x\svee L e\leq_L t$. Also,  $y\leq_K t$ gives $y\leq_L t$.   Hence, 
$ z\svee L e =\alit x\svee L y\svee L e  \leq_L t$,
whereby  $x\svee K y=z\svee L e$, as required.
\end{proof}

Finally, we present a proof promised in Section~\ref{sect:visualintro}.

\begin{proof}[Proof of Observation~\ref{observe:nlkrttHgb}] For the sake of contradiction, suppose that $K$ 
is a length-preserving semimodular extension  of $L$ such that $\Jir K\cong P'$. (We do not assume that $a\in L$ corresponds to $a'\in K$, etc.)
Since $e'\not\geq c'$ excludes that $e'=1_K$ and $\length K=\length L=4$, it follows that $0\prec_K b'\prec_K a'\prec_K e'\prec_K 1_K$. Using that $d'\not\leq_K a'\leq_K e'$, $d'\leq_K e'$, so  $a'<_K a'\vee d'\leq e'$, and $a'\prec_K e'$, and  we obtain that $ a'\vee d' =  e'$, contradicting $e'\in\Jir K$.  
\end{proof}

\renewcommand{\refname}{References}

\makeatletter
\renewcommand\@biblabel[1]{[#1]}
\makeatother

\color{black}

%{
%%Font for the figures
%$habcdpqree'H \alg N \alg M$ $\lcov L e=a$ $\lcov K{e'}=h$
%
%$\{ $ $a,b,cdPP'LKz_0,z_1,z_2,B_0, B, A,\sideal P e,\sodeal P e$,
%$\sideal Q e,\sodeal Q e,L, K, e'$ \\
%Elements (to be) doubled:   $\in \alg D$  New elements:   $\in \alg N$
%Join-irreducible elements: 
%
%$L=L_0$ $K_1=L_1$  
%$L_1=K_1$ $K_2=L_2$  
%$L_2=K_2$ $K_3=K$  
%
%
%}

\end{document}